\documentclass[12pt]{amsart}
\usepackage[utf8]{inputenc}
\usepackage{amsmath}
\usepackage{amssymb}
\usepackage{amsfonts}
\usepackage{amsthm}
\usepackage{mathrsfs} 
\usepackage{bm}
\usepackage{bbm}
\usepackage{tikz}
\usepackage{centernot}
\usepackage{hyperref}
\usepackage[autostyle]{csquotes}  

\usepackage[ruled,vlined]{algorithm2e}
\usepackage[margin=1.2in]{geometry}
\setlength{\parskip}{\baselineskip}

\newcommand{\N}{\Bbb{N}}

\newcommand{\Z}{\Bbb{Z}}

\newcommand{\Q}{\Bbb{Q}}

\newcommand{\CC}{\mathcal{C}}

\newcommand{\HH}{\mathcal{H}}
\newcommand{\II}{\mathcal{I}}
\newcommand{\JJ}{\mathcal{J}}

\newcommand{\PP}{\mathcal{P}}
\newcommand{\QQ}{\mathcal{Q}}

\newcommand{\TT}{\mathcal{T}}

\hypersetup{
    colorlinks=true,
    linkcolor=blue,
    filecolor=magenta,
    urlcolor=blue,
    citecolor=blue
}

\makeatletter
\newtheorem*{rep@theorem}{\rep@title}
\newcommand{\newreptheorem}[2]{%
\newenvironment{rep#1}[1]{%
 \def\rep@title{#2 \ref{##1}}%
 \begin{rep@theorem}}%
 {\end{rep@theorem}}}
\makeatother

\newtheorem{thm}{Theorem}
\newreptheorem{thm}{Theorem}

\newtheorem{result}{Result}[section]
\newtheorem{lem}[result]{Lemma}
\newtheorem{prp}[result]{Proposition}
\newtheorem{cor}[result]{Corollary}

\newtheorem{clm}[result]{Claim}

\theoremstyle{definition}
\newtheorem{rmk}[result]{Remark}
\newtheorem*{defn}{Definition}

\newtheorem{delusion}{Delusion}

\newtheorem*{ack}{Acknowledgements}

\theoremstyle{remark}

\DeclareMathOperator{\newp}{\bm{\mathsf{newpairs}}}

%\bm{\mathsf{root}}}

%\cost}

\newcommand{\hide}[1]{}
\newcommand{\edit}[1]{}%{\color{red}{#1}}}

\newcommand{\rough}[1]{}%\textbf{\textcolor{blue}{#1}}}
\definecolor{darkgreen}{RGB}{75,150,75}
\newcommand{\review}[1]{}%\textcolor{darkgreen}{#1}}
\newcommand{\dc}[1]{}%\textcolor{orange}{dc: #1}}
\newcommand{\zh}[1]{}%\textcolor{blue}{zh: #1}}

%togglehide
\newcommand{\thide}[1]{}%#1}
\newcommand{\pub}[1]{}%\textcolor{purple}{#1}}

\title{On a method of Alweiss}
\author{Zach Hunter}
\email{zachary.hunter@exeter.ox.ac.uk}
\date{\today}

\begin{document}
\begin{abstract}
    Recently, Alweiss settled Hindman's conjecture over the rationals. In this paper, we provide our own exposition of Alweiss' result, and show how to modify his method to also show that sums of distinct products are partition regular over the rationals.
\end{abstract}

\maketitle

\section{Introduction}

Throughout, we abide by the convention that the natural numbers, $\N$, do not contain zero. Additionally, given $n\in \N$, we write $[n]:=\{1,\dots,n\}$.

Very recently Alweiss \cite{alweiss} proved the following. 
\begin{thm} { \cite[Theorem~1]{alweiss} (Hindman's over rationals)}\label{alw result} Fix $r,k<\infty$. For any $r$-coloring $C:\Q_+\to \{1,\dots,r\}$ of the positive rationals, we can find $x_1,\dots,x_k\in \Q_+$ such that
\[\left\{\sum_{i\in I}x_i:\textrm{non-empty } I\subset [k]\right\}\cup \left\{\prod_{j\in J}x_j:\textrm{non-empty }J\subset [k]\right\}\] is monochromatic under $C$.
\end{thm}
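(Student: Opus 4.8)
The plan is to produce $x_1,\dots,x_k$ one at a time by a greedy construction. It is convenient to first reduce, by a routine compactness argument, to the finite statement that for every $r$ and $k$ there is a finite $F\subset\Q_+$ such that every $r$-colouring of $F$ already contains the required configuration. Granting that, I would build $x_1,\dots,x_k$ in order while maintaining the invariant that some fixed colour $c$ contains every partial sum $\sum_{i\in I}x_i$ and every partial product $\prod_{j\in J}x_j$ formed from the indices chosen so far, together with a large ``reservoir'' $R$ of rationals any one of which is a legal next choice. The whole proof then reduces to keeping such a reservoir nonempty through all $k$ steps.

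Two features of $\Q_+$ should carry the argument. First, a Finite Products theorem: since $(\Q_+,\cdot)$ is the free abelian group on the primes, Hindman's theorem (equivalently, a multiplicative idempotent ultrafilter on $\beta\Q_+$) gives, for any finite colouring, a colour $c$ and an infinite sequence $\vec y$ with $\mathrm{FP}(\vec y)$ monochromatic of colour $c$, and one may freely pass to sub-$\mathrm{IP}$-sets of $\mathrm{FP}(\vec y)$ in the Milliken--Taylor fashion. This makes the \emph{products} essentially free: choosing each $x_i$ to be a block product $\prod_{t\in B_i}y_t$ over disjoint finite blocks forces every $\prod_{j\in J}x_j$ into $\mathrm{FP}(\vec y)$, hence into colour $c$. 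Second, $(\Q,+)$ is a $\Q$-vector space, so additive largeness notions -- piecewise syndeticity, membership in an additive idempotent, the van der Waerden and Folkman properties -- are invariant under every dilation $q\mapsto\lambda q$ with $\lambda\in\Q_+$; this dilation-invariance is precisely what $\N$ lacks.

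With products handled by block products, the reservoir only has to control the finitely many \emph{sums} $\sum_{i\in I}x_i$, so the core of the argument is a refinement lemma: if the colour class of $c$ is additively large and $\vec y$ is a multiplicative $\mathrm{IP}$-sequence inside it, then for any finite list $a_1,\dots,a_N$ of partial sums already produced there are still infinitely many block products of a tail of $\vec y$ lying in $\bigcap_{\ell}(c-a_\ell)$, and these can moreover be chosen so that the process continues (the Glazer-style ``$c^{\star}$'' trick). I would prove this by interleaving the two structures: thin $\vec y$ so that the relevant block products fall into a dilate $\lambda^{-1}c$, which remains additively large by dilation-invariance, and then use additive largeness to land a block product in the required shifts. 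The one genuinely delicate input, which must be established before any of this runs, is the \emph{existence} of a single colour class that is simultaneously multiplicatively and additively large; over $\N$ this is exactly the open coexistence problem for additive and multiplicative idempotents (a positive answer there would prove Hindman's conjecture over $\N$), whereas over $\Q$ one leverages the dilation-invariance above to run the additive and multiplicative colour-focusing at the same time.

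I expect this simultaneous-largeness step to be the main obstacle, with the greedy construction around it being a careful but routine absorption of the $2^{O(k)}$ additive shift constraints that accumulate. The remaining work -- the compactness reduction, the Finite Products theorem, and checking that after $k$ steps genuinely every nonempty $\sum_{i\in I}x_i$ and every nonempty $\prod_{j\in J}x_j$ has been forced into colour $c$ -- is then bookkeeping.
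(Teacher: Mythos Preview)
Your proposal identifies the right difficulty but does not resolve it. The step you flag as ``the main obstacle'' --- producing a single colour that is simultaneously multiplicatively IP-rich and additively large enough to absorb all the shift constraints --- is not merely delicate; no argument of the dilation-invariance flavour you sketch is known to close it. Concretely: once you have $\mathrm{FP}(\vec y)\subset c$, you need block products $\prod_{t\in B}y_t$ landing in $\bigcap_\ell(c-a_\ell)$. Thinning $\vec y$ respects the \emph{multiplicative} IP-structure, but gives you no mechanism to force multiplicative block-products into prescribed \emph{additive} shifts of $c$; the ``pass to a dilate $\lambda^{-1}c$'' move does not help, because a dilate of an IP-set by a fixed $\lambda$ is not an IP-set, and the shifts $a_\ell$ you must absorb are sums of earlier block-products, not dilates of anything. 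Dilation-invariance of additive largeness over $\Q$ tells you each $\lambda^{-1}c$ is still large, but it does not tell you the large set meets your multiplicative IP-set in the required way. This is exactly the coexistence problem you name, and the passage from $\N$ to $\Q$ does not dissolve it; you have restated the problem rather than solved it.

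The paper (following Alweiss) sidesteps the simultaneous-largeness issue entirely. There is no ultrafilter argument and no attempt to find a colour that is both additively and multiplicatively rich. Instead one inductively builds a long vector $\vec v\in\Q_+^n$ that is \emph{lower-consistent}: for every $n$-family $\II$ in which the block $f(\II)$ containing the largest index has minimal size, the colour of $\varphi_\II(\vec v)$ agrees with the colour of the single product $\prod_{i\in f(\II)}v_i$. The inductive step (Proposition~\ref{stable ext}) appends a new coordinate by applying the \emph{multidimensional polynomial van der Waerden theorem} to a family of ``shift'' perturbations; the crucial point is that these shifts are good polynomials (zero constant term) in a dilation parameter, precisely because $|A|>|B|$ for the pairs $(A,B)$ arising from lower-consistency. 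A separate combinatorial lemma (Lemma~\ref{psuedo disjoint union}), combining hypergraph Ramsey with Folkman's theorem, then extracts from any lower-consistent $\vec v$ disjoint blocks $I_1,\dots,I_k$ whose products $x_j:=\prod_{i\in I_j}v_i$ give the monochromatic configuration. The polynomial van der Waerden step is the genuine innovation: it replaces the unavailable ``joint idempotent'' by a finitary shifting argument that handles the additive and multiplicative interactions simultaneously, without ever asking a single colour class to be large in both senses.
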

\begin{rmk}\label{trivial N}
    Here, we do not require that our $x_i$ be distinct. This makes it important that $0\not \in \Q_+$, else the result is trivial (by taking $x_1=x_2 = \dots = x_k:= 0$). 

    However, this choice does not make the problem any easier. Indeed, if $C$ has the property that $C(q)\neq C(2q)$ for all $q\in \Q_+$ (which can easily be obtained, by at worst doubling the number of colors), then we will unable to take $x_i = x_j$ for distinct $i,j$, as then $C(x_i) \neq C(x_i+x_j)$). So, we merely keep this convention for notational convenience, since it would be unpleasant to address distinctness during our proofs.
\end{rmk}
\noindent In this paper, we give our own exposition of his proof, and go on to obtain a stronger result.

Namely, we extend\footnote{\textbf{Nota bene:} in this version of present manuscript, we do not include a proof of this result. All the necessary ideas will be presented, and a brief sketch is stated in Section~\ref{sketch of new}. A clear argument will be added in a later version. For now we have focused on just documenting how to re-prove Theorem~\ref{alw result}, for clarity of exposition.} his result to a more general set of patterns (sums of `disjoint products'). \begin{thm}\label{rational main}(Generalized Hindman's over rationals) 
    Fix $r,k<\infty$. For any $r$-coloring $C:\Q_+\to \{1,\dots,r\}$, we can find $x_1,\dots,x_k\in \Q_+$ such that:
    \[\left\{\sum_{j=1}^\ell \prod_{i\in I_j}x_i: \textrm{non-empty and disjoint }I_1,I_2,\dots,I_\ell\subset [k]\right\}\]  is monochromatic under $C$.
\end{thm}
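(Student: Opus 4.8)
The plan is to bootstrap from Theorem~\ref{alw result} by iterating it on a cleverly chosen sequence of colorings, in the spirit of how the Milliken–Taylor / Hindman-type strengthenings are obtained from the base case. Let me sketch the structure. First I would observe that it suffices to produce, for each fixed $r,k$, a *single* configuration of the stated form that is monochromatic; the point of the ``disjoint products'' patterns is that the set of sums $\sum_{j=1}^\ell \prod_{i\in I_j}x_i$ over disjoint nonempty $I_1,\dots,I_\ell\subset[k]$ is much richer than the Hindman configuration of Theorem~\ref{alw result}, which is the special case $\ell=1$ (pure products) together with the case where all $I_j$ are singletons (pure sums). So the new content is controlling the mixed sum-of-products expressions simultaneously.

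The key idea I would pursue is a *substitution/composition* trick: run Alweiss' theorem not on $\Q_+$ directly but on a ``lifted'' coloring that already encodes the value of many product-combinations. Concretely, given the target $k$, choose an auxiliary parameter $K = K(r,k)$ (to be determined by an iterated application of Theorem~\ref{alw result}), and build a coloring $C'$ of $\Q_+$ whose color at $q$ records the tuple of $C$-colors of $q\cdot\prod_{i\in J} y_i$ as $J$ ranges over subsets of an already-fixed ``product part'' $y_1,\dots,y_m$. Applying Theorem~\ref{alw result} to $C'$ yields new elements $x_1,\dots,x_{k'}$ such that all sums $\sum_{i\in I}x_i$ are $C'$-monochromatic; unpacking the definition of $C'$, this says that all expressions of the form $\big(\sum_{i\in I}x_i\big)\cdot\prod_{j\in J}y_j$ are $C$-monochromatic. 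Interleaving this with the product structure inside the $x_i$'s themselves — obtained from the product part of the same application of Theorem~\ref{alw result} — and iterating the construction $k$ times (each round introducing one more ``layer'' of products that can be summed over disjoint index sets) should assemble the full pattern. One has to be careful that the index sets $I_1,\dots,I_\ell$ in the conclusion are allowed to be disjoint subsets of a *common* ground set $[k]$, not a nested family; handling this will require that at each stage the newly created generators multiply cleanly against all previously created generators, which is exactly what the ``sums of products'' closure gives us, so the bookkeeping, while heavy, should go through.

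The main obstacle, and the reason the excerpt defers the full argument, is precisely this bookkeeping: making the lifted colorings $C'$ genuinely finite (so that Theorem~\ref{alw result} applies with finitely many colors) while ensuring that a single final configuration $x_1,\dots,x_k$ simultaneously realizes *all* disjoint-partition patterns, rather than only those respecting some hierarchical order imposed by the iteration. I expect one needs to set up the induction on a richer statement — e.g.\ producing not just $x_1,\dots,x_k$ but an infinite ``IP-like'' structure from which any finite disjoint-products pattern can be read off, and only at the end extracting $k$ elements — and to verify that Alweiss' method (as re-proved in the earlier sections) is robust enough to be applied to these structured colorings. A secondary technical point is distinctness of the $I_j$ and of the resulting values, but as Remark~\ref{trivial N} explains, working over $\Q_+$ with the doubling trick lets us ignore this. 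I would therefore structure the write-up as: (1) reduce to an infinitary statement; (2) define the lifted colorings and the iteration; (3) run the induction using Theorem~\ref{alw result} as a black box at each step; (4) extract the finite configuration and check it realizes every disjoint-products pattern.
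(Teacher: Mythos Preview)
Your proposal has a genuine gap, and it is exactly the obstacle you yourself flag but do not resolve. The lifted-coloring iteration you describe is a Milliken--Taylor style argument: applying Theorem~\ref{alw result} to a coloring $C'(q)$ that records $C(q\cdot\prod_{j\in J}y_j)$ gives you control over expressions of the form $(\sum_{i\in I}x_i)\cdot\prod_{j\in J}y_j = \sum_{i\in I}\bigl(x_i\prod_{j\in J}y_j\bigr)$. After distribution, every summand carries the \emph{same} $J$-part. But the target pattern requires sums like $\prod_{i\in I_1}z_i + \prod_{i\in I_2}z_i$ where the $I_j$ are arbitrary disjoint subsets of a common ground set; in particular, something like $x_1y_1 + x_2y_2$ must be controlled, and your iteration never touches such mixed terms. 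Iterating more layers only deepens the hierarchy: you obtain patterns respecting a block ordering, not the full symmetric family of disjoint partitions. Your remark that ``the bookkeeping, while heavy, should go through'' is precisely where the argument fails---this is not bookkeeping, it is the entire difficulty, and black-box use of Theorem~\ref{alw result} is not known to overcome it.

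The paper's route is fundamentally different: rather than treat Theorem~\ref{alw result} as a black box, it reopens the machinery that proves it. The reduction in Section~\ref{family disjoint union} shows that Theorem~\ref{rational main} follows from Proposition~\ref{rat build v}, i.e.\ from finding vectors that are $\mathfrak{X}_{all}$-consistent (consistent for \emph{all} $n$-families, not just the extreme or ``lower'' ones). This in turn is obtained (Section~\ref{sketch of new}) by strengthening the stable-extension Proposition~\ref{stable ext} to Lemma~\ref{full extension}, which handles every new $n$-family. The crucial technical point is that, after ordering the families $\II$ by $|f(\II)|$ and setting $S_t = f(\II_t)\setminus\{n\}$, one has $|A|+|S_t|>|B|$ for every relevant pair $(A,B)$, so Lemma~\ref{general term shift} (polynomial van der Waerden with the extra $|S|$ in the exponent) still applies, and the nesting $\Omega_t\supset\Omega_{t+1}$ lets the compactly-stabilizable induction of Proposition~\ref{comp stab to multistab} go through. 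None of this is visible from the statement of Theorem~\ref{alw result} alone.
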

\noindent

We would like to emphasize that our methods are \textit{heavily} based off of the work of Alweiss \cite{alweiss}. The contribution of this paper comes from several technical tricks and optimizations, which helped the powerful techniques from \cite{alweiss} attain (or at least approach) their ``true potential''. 

We feel that the techniques of \cite{alweiss} are very beautiful, and a secondary \hide{purpose}intention of this paper \hide{is}was to present them in a clearer and more polished fashion (so that they may be appreciated by a wider audience). Unfortunately, due to the highly technical nature of \cite{alweiss}, there is still noticeable room for improvement in this area. The patient reader might wish to revisit this preprint a bit later, once it has been updated.

\begin{ack}
    We are grateful to Ryan Alweiss for sharing a copy of his preprint (\cite{alweiss}) with us, before its appearance on the arXiv (and additionally for encouraging us to publish this independently). We additionally thank Alweiss for a number of conversations about a previous, related paper of his (\cite{alweiss2}) which had helped the author build intuition for some of the techniques which appear here.

    Lastly, we thank Ryan Alweiss, Daniel Carter, and Neil Hindman for taking a look at a preliminary version of this manuscript, and offering helpful feedback (not all of which have been implemented yet). Any shortcomings of the current manuscript land squarely on the author.
\end{ack}

\hide{\subsection{A brief comment on the structure}

The current note could perhaps be considered a research announcement. There are a number of places where we wish to
\begin{itemize}
    \item add clear motivation and/or high-level discussion;
    \item more smoothly present and organize several definition-heavy reductions, where we claim certain identities/embeddings behave as desired;
    \item fill-in the details of Theorem~\ref{rational main}, which is rather technical.    
\end{itemize}

}
\section{A totally different Ramsey problem}\label{family disjoint union}

\subsection{Some quick definitions}
Let $\vec{x} = (x_1,\dots,x_n)\in \mathbf{R}^n$ be a vector of semi-ring\footnote{For us, a (commutative) \textit{semi-ring} will be a set $\mathbf{R}$ equipped with two binary operations $+,\cdot:\mathbf{R}\times \mathbf{R}\to \mathbf{R}$ that satisfy commutativity, associativity and the distributive law. We do \textit{not} assume semi-rings have additive or multiplicative identities.} elements (say with $\mathbf{R}=\N,\Q_+$, or some more complicated setting). If $\II$ is a non-empty family of disjoint non-empty subsets $I\subset [n]$, we define
\[\varphi_\II(\vec{x}) := \sum_{I\in \II}\prod_{i\in I} x_i. \]For the remainder of this paper, we call such $\II$ \textit{$n$-families}.

These next definitions are not essential for the context of our paper, but it will aid discussion.

\begin{defn}
    Given an $n$-family $\II$ and $|\II|$-family $\JJ$, we define their composition
    \[\JJ\circ \II := \{\bigcup_{j\in J} I_j:J\in \JJ\}\]which is an $n$-family.
\end{defn}

\begin{defn}
    We say an $n$-family $\II$ is \textit{extreme} if $|\II| =1$ or $\max_{I\in \II}\{|I|\} = 1$ (i.e., either $\II$ only has one part, or all parts are singletons).
\end{defn}

\subsection{Discussion and reductions} \zh{TODO: fix `continuity errors' throughout this subsection. (namely, rephrase things to reflect that we actually can prove Theoremr~\ref{rational main} (meaning don't call it a conjecture, etc.))}

Given an $r$-coloring $C:\mathbf{R} \to [r]$, our goal (in order to re-prove Theorem~\ref{alw result}) is to find $\vec{x} =  (x_1,\dots, x_k)\in \mathbf{R}^k$ (with $\mathbf{R}= \Q_+$) such that the collection \begin{equation}\label{easier goal}
    \{\varphi_{\II}(\vec{x}):\II \textrm{ is an extreme $k$-family}\}
\end{equation}is monochromatic under $C$. Meanwhile, to prove Theorem~\ref{rational main}, one needs to find $\vec{x} \in \mathbf{R}^k$ where the collection \begin{equation}\label{harder goal}
    \{\varphi_{\II}(\vec{x}):\II \textrm{ is a $k$-family}\}
\end{equation}is monochromatic under $C$.

Now, given an $r$-coloring $C$ on $\mathbf{R}$ along with a vector $\vec{v} = (v_1,\dots,v_n) \in \mathbf{R}^n$, one obtains an $r$-coloring $C' = C_{\vec{v}}'$ on the collection of all $n$-families, via 
\[C'(\II) := C(\varphi_\II(\vec{v}))\]for each $n$-family $\II$. From here, to acheive the desired goal\zh{there are two goals! make less ambiguous}, it would suffice to find non-empty disjoint sets $I_1,\dots, I_k$, such that the collection 
\[\left\{\JJ\circ \{I_1,\dots,I_k\}: \JJ\textrm{ is an extreme $k$-family}\right\} \]
is monochromatic under $C'$ (indeed, then we would ``win'' by taking $x_j := \prod_{i\in I_j} v_i$ for $j=1,\dots,k$).

So now, we've reached a combinatorial coloring problem. If one could show that for every $r,k<\infty$, there existed some $n$ such that all $r$-colorings $C'$ of $n$-families, we had a monochromatic pattern like above, then our main theorem would follow immediately. However, such a result is impossible. Indeed, if $C'(\II)$ was determined by the parity of $|\II|$, then it would be impossible to find such a pattern for $k=2$ (thus it is already doomed in the case of $(r,k) =(2,2)$, in which case the collections appearing in (\ref{easier goal}) and (\ref{harder goal}) are the same). 

Nevertheless, we will soon see that if $C'(\II)$ exhibits some ``local structure'', then things work out much nicer.

Given a $k$-family $\II$, let\footnote{This is somewhat ad hoc, but perhaps not totally unexpected. In Ramsey theory it frequently is helpful to introduce a sense of order upon various subobjects that are considered.} $f(\II)$ be the set $I\in \II$ such that $\max(I)> \max(I')$ for all other $I' \in \II$. Let's first suppose we had an $r$-coloring of $n$-families, $C'$, such that
\[C' = c\circ f\]for some $r$-coloring $c$ of the non-empty subsets of $\{1,\dots,n\}$. In this case, we actually do win (for large enough $n$). This is due to a result known as ``the Disjoint Unions Theorem''.
\begin{thm}[Disjoint Unions Theorem]\label{DUT}
    For each $r,k<\infty$, there exists $n$ such that for any $r$-coloring $c$ of the subsets of $\{1,\dots,n\}$, there exists disjoint $I_1,\dots,I_k$ where
    \[\left\{\bigcup_{j\in J} I_j:\textrm{non-empty }J\subset [k]\right\}\]is monochromatic under $c$.
\end{thm}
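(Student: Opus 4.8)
The Disjoint Unions Theorem is a classical consequence of the Graham--Rothschild / Hales--Jewett machinery, and the cleanest route is to deduce it from the Finite Union Theorem (the finitary form of Hindman's theorem), which I will take as the engine. The plan is as follows.

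\textbf{Step 1: Reformulate in terms of finite sets of integers.} I would first recall the Finite Union Theorem: for every $r$ and every $m$, there is $N=N(r,m)$ such that whenever the \emph{nonempty finite subsets of $\N$} are $r$-colored, there exist pairwise disjoint nonempty finite sets $D_1<D_2<\dots<D_m$ (meaning $\max D_i<\min D_{i+1}$) such that all nonempty unions $\bigcup_{j\in J}D_j$ receive the same color. This is itself equivalent to Hindman's theorem via the bijection $D\mapsto \sum_{i\in D}2^{i-1}$ between finite sets and positive integers, under which disjoint union corresponds to sum of integers with disjoint binary support; I would state this equivalence and cite Hindman (or Baumgartner's combinatorial proof) rather than reprove it. The point of using the \emph{ordered} version with $D_1<\dots<D_m$ is that it lets me control where the blocks live, which matters for embedding into a bounded ground set $[n]$.

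\textbf{Step 2: Compactness to get a finite ground set.} The Finite Union Theorem is about colorings of \emph{all} finite subsets of $\N$, whereas Theorem~\ref{DUT} asks for a single finite $n$ that works for colorings of subsets of $[n]$. I would bridge this with a standard compactness (König's lemma / ultrafilter) argument: if no finite $n$ sufficed, one could take, for each $n$, a bad $r$-coloring $c_n$ of the subsets of $[n]$ with no monochromatic $k$-union-structure, and extract (via a diagonal argument over the countably many finite subsets of $\N$) a coloring $c$ of all finite subsets of $\N$ with no monochromatic $k$-union-structure anywhere — contradicting Step 1 with $m=k$. Alternatively, and perhaps more cleanly for exposition, I would just invoke the Finite Union Theorem to get the pairwise disjoint $D_1,\dots,D_k$ living inside some initial segment $[n]$ where $n:=\max_i \max D_i$ is \emph{a priori} unbounded, and then observe that compactness (equivalently, the fact that the Finite Union Theorem has an explicit finite bound on the relevant initial segment) pins $n$ down; I would pick whichever phrasing is shortest and cite the bound.

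\textbf{Step 3: Read off the conclusion.} Given the monochromatic disjoint sets $D_1,\dots,D_k$ from Step 1--2, set $I_j:=D_j$. These are pairwise disjoint nonempty subsets of $[n]$, and for every nonempty $J\subseteq[k]$ the set $\bigcup_{j\in J}I_j=\bigcup_{j\in J}D_j$ is one of the monochromatic unions, so $c$ is constant on $\{\bigcup_{j\in J}I_j:\emptyset\neq J\subseteq[k]\}$, which is exactly what Theorem~\ref{DUT} demands.

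\textbf{Main obstacle.} The only real content is the Finite Union / Hindman input in Step 1; everything else is bookkeeping. In an exposition-focused paper I expect the right move is to cite Hindman's theorem (finite version) or the Galvin--Glazer ultrafilter proof as a black box, since reproving it would be a substantial detour, and to spend the few lines of actual work on the compactness reduction in Step 2 and the trivial translation in Step 3. If one wanted a self-contained account, the hard part would be proving the Finite Union Theorem itself — either via idempotent ultrafilters in $\beta\N$ or via Baumgartner's elementary combinatorial argument — but that is orthogonal to the purpose of the present section and I would not attempt it here.
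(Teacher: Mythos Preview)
The paper does not actually prove Theorem~\ref{DUT}; it is stated as a known result and attributed to Graham--Rothschild--Spencer \cite{graham}. So there is no ``paper's own proof'' to compare against directly. That said, the paper's Remark after Lemma~\ref{psuedo disjoint union} signals what argument the author has in mind: the proof of Lemma~\ref{psuedo disjoint union} (iterated hypergraph Ramsey to make $c$ depend only on $|I|$, then Folkman's theorem on the sizes) is explicitly described as ``very reminiscent of a proof of the Disjoint Unions Theorem'' from \cite{graham}. Your route is different: you invoke the Finite Union Theorem (i.e., finitary Hindman) as a black box and then run compactness. This is correct, but it inverts the dependency structure the paper is working with --- in a paper whose whole point is to establish Hindman-type results, citing Hindman to prove an auxiliary lemma is stylistically awkward, even if logically sound. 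The Ramsey-plus-Folkman route is more elementary (Folkman is strictly weaker than Hindman and has short direct proofs), and it is also the approach that the paper actually reuses in the proof of Lemma~\ref{psuedo disjoint union}, so it meshes better with the surrounding exposition.
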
\noindent From here, it is not hard to show that
\[\{\JJ\circ \{I_1,\dots,I_k\}:\JJ\textrm{ is a $k$-family}\}\]is monochromatic under $C'$. \zh{TODO: clarify that we can reach this scenario. but this is a rather technical matter. so we consider the simpler Alweiss set-up primarily.}

A key insight of Alweiss is that a somewhat weaker assumption sufficed, namely that if $C'|_{\mathfrak{X}} = c\circ f|_{\mathfrak{X}}$ for some appropriately structured collection $\mathfrak{X}$ of $n$-families, then we still win. We shall prove a variant of this, for a slightly different structured collection $\mathfrak{X}'$ (this variant is not strictly necessary, but will make life slightly easier).

Here is what we use to re-prove Theorem~\ref{alw result}.
\begin{lem}\label{psuedo disjoint union}
    Fix $r,k<\infty$. Then there exists a finite integer $n$ such that the following holds:

    Consider any $r$-coloring $C'$ of the set of $n$-families. Also, suppose there is a coloring $c$ of the subsets of $\{1,\dots, n\}$, such that 
    \[C'(\II) = c\circ f(\II),\]for every $n$-family $\II$ where $|f(\II)| \le |I'|$ for all $I'\in \II$. 
    
    Then, there exists disjoint $I_1,\dots, I_k\subset [n]$ such that the desired pattern the collection
    \[\{\JJ\circ \{I_1,\dots,I_k\}:\JJ\textrm{ is an extreme $k$-family}\}\] is monochromatic under $C'$.
    \begin{rmk}
        Despite working with a different ``structured collection'', our proof will be essentially same as in \cite{alweiss}. We further note that, in turn, Alweiss's proof is very reminiscent of a proof of the ``Disjoint Unions Theorem'' presented by Graham, Rothschild, and Spencer \cite{graham}.
    \end{rmk}
\begin{proof}
        Fix some integer $n$, which we assume to be sufficiently large. Suppose we are given colorings $C',c$ as above. \zh{TODO: write quantifiers more precisely!!} 
        
        First, by repeatedly applying Ramsey's Theorem (and its hypergraph variants), we can find $S\subset \{1,2,\dots,n\}$ where:
        \begin{itemize}
            \item there exists an $r$-coloring $\chi:[|S|]\to [r]$ where $c(I)= \chi(|I|)$ for every subset $I \subset S$;
            \item $|S| \ge n'$, where $n'$ is some quantity which gets arbitrarily large as $n\to \infty$.
        \end{itemize}

        Next, applying \textit{Folkman's Theorem} (which is a generalization of Schur's Theorem), we can find a vector $\vec{m}\in \N^{n''}$ where:
        \begin{itemize}
            \item $n''$ is some quantity which gets arbitrarily large as $n'\to \infty$; 
            \item the set  \[\left\{\sum_{j\in J} m_j: \textrm{non-empty }J\subset [n''] \right\}\]is monochromatic under $\chi$ (i.e., contained inside a color class of $\chi$).
        \end{itemize}

        Whence, by taking $n$ sufficiently large, we can assume $n''\ge k$. So now, assume WLOG that that we $S\subset [n]$ and $\vec{m}\in \N^k$ satisfying the conditions above. We are nearly done. 

        By reordering our entries, we may assume $m_1\ge m_2\ge \dots \ge m_k$. Because of our last bullet, we must have $|S| \ge m_1+\dots+ m_k$ (otherwise $\chi$ would not be well-defined here). 
        
        Finally, writing $s_i$ to denote the $i$-th smallest element of $S$, we define 
        \[I_j:= \{s_i:\sum_{t=1}^{j-1} m_t < i \le \sum_{t=1}^j m_t\}\](this is to ensure that: \[\max(I_1)<\min(I_2)\le \max(I_2)<\min(I_3)\le \dots \le \max(I_{k-1})<\min(I_k).)\]
        
        We claim this suffices. Indeed, consider consider an extreme $k$-family $\JJ$. If $|\JJ| =1$, then $\JJ = \{J\}$ and \[C'(\JJ\circ \{I_1,\dots,I_k\}) = c(\bigcup_{j\in J}I_j) = \chi(\sum_{j\in J} m_j) = \chi(m_1).\]Otherwise, $\JJ$ is a set of singletons, in which case, letting $j^* := \max\{j:\{j\}\in \JJ\}$ , we see
        \[C'(\JJ\circ \{I_1,\dots,I_k\}) = c(I_{j^*}) = \chi(m_{j^*}) = \chi(m_1)\](here we have used the fact that $m_1\ge \dots \ge m_k$, so that $|f(\JJ\circ \{I_1,\dots,I_k\})| =|I_{j^*}|\le  |I'|$ for all $I'\in \JJ\circ \{I_1,\dots,I_k\}$, which justifies the first equality (since this means $\JJ$ is an $n$-family where $C'(\JJ) = c\circ f(\JJ)$, by our initial assumptions about $C',c$)).
    \end{proof}
\end{lem}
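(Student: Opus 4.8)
The plan is to reduce the coloring problem on $n$-families to a statement about colorings of nested set systems that can be handled by classical Ramsey-type results: Ramsey's theorem (and its hypergraph versions), followed by Folkman's theorem. The key observation is that although $C'$ is only assumed to agree with $c\circ f$ on those $n$-families $\II$ whose ``leading part'' $f(\II)$ is no larger than any other part, we only need to control $C'$ on families of the very special form $\JJ\circ\{I_1,\dots,I_k\}$ where $\JJ$ is \emph{extreme}; and for extreme $\JJ$ the family $\JJ\circ\{I_1,\dots,I_k\}$ is either a single block (when $|\JJ|=1$) or a union of some of the $I_j$'s taken as singletons, so that everything reduces to evaluating $c$ on unions $\bigcup_{j\in J}I_j$. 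If the $I_j$ are built so that these unions behave nicely under $c$, we win.

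First I would fix a large $n$ and pull back the structure of $c$ step by step. Since $c$ is an $r$-coloring of subsets of $[n]$, repeated application of Ramsey's theorem for $t$-uniform hypergraphs (for each $t$ up to some bound depending on $k$) yields a large subset $S\subseteq[n]$ on which $c(I)$ depends only on $|I|$; call the resulting coloring $\chi\colon[|S|]\to[r]$, with $|S|\ge n'$ and $n'\to\infty$ as $n\to\infty$. Next, apply Folkman's theorem to $\chi$: this produces a vector $\vec m=(m_1,\dots,m_{n''})$ of positive integers such that all subset-sums $\sum_{j\in J}m_j$ lie in a single color class of $\chi$, with $n''\to\infty$ as $n'\to\infty$. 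Taking $n$ large enough forces $n''\ge k$, so we may keep just $k$ of the $m_j$'s, reorder them so that $m_1\ge\cdots\ge m_k$, and note $|S|\ge m_1+\cdots+m_k$ since $\chi$ must be defined on all these sums.

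Then I would explicitly construct $I_1,\dots,I_k$ as consecutive blocks inside $S$: writing $s_i$ for the $i$-th smallest element of $S$, let $I_j=\{s_i : \sum_{t<j}m_t < i \le \sum_{t\le j}m_t\}$, so $|I_j|=m_j$ and the blocks are ``ordered'' in the sense that $\max(I_j)<\min(I_{j+1})$. To verify correctness, take any extreme $k$-family $\JJ$. If $|\JJ|=1$, say $\JJ=\{J\}$, then $C'(\JJ\circ\{I_1,\dots,I_k\})=c(\bigcup_{j\in J}I_j)=\chi(\sum_{j\in J}m_j)$, which by the Folkman property equals the fixed color $\chi(m_1)$. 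If instead $\JJ$ is a family of singletons, then $\JJ\circ\{I_1,\dots,I_k\}=\{I_j:\{j\}\in\JJ\}$, and its leading part is $I_{j^*}$ with $j^*=\max\{j:\{j\}\in\JJ\}$; since $m_1\ge\cdots\ge m_k$ we have $|I_{j^*}|=m_{j^*}\le m_j=|I_j|$ for every such $j$, so $f$ picks out the \emph{smallest} block, the hypothesis on $C'$ applies, and $C'(\JJ\circ\{I_1,\dots,I_k\})=c(I_{j^*})=\chi(m_{j^*})=\chi(m_1)$ again. Hence the whole collection is monochromatic.

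The main obstacle — and the reason the ordering conventions are chosen as they are — is bookkeeping the hypothesis ``$|f(\II)|\le|I'|$ for all $I'\in\II$'': one must arrange the construction so that whenever we invoke the agreement $C'=c\circ f$, the leading part of $\JJ\circ\{I_1,\dots,I_k\}$ is genuinely the smallest part. This is exactly why the $m_j$ are sorted in decreasing order while the blocks $I_j$ are laid out in increasing position, so that $f$ (which selects the part containing the largest element) lands on $I_{j^*}$, which has the smallest size among the chosen blocks. Everything else is a routine chaining of Ramsey and Folkman with the quantifiers tracked so that $n''\ge k$ in the end; the only care needed is to make the iterated hypergraph-Ramsey step explicit enough that ``$c$ depends only on cardinality on $S$'' is justified for all relevant set sizes.
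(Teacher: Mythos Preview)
Your proposal is correct and follows essentially the same approach as the paper: iterate hypergraph Ramsey to make $c$ depend only on cardinality on a large set $S$, apply Folkman's theorem to the induced coloring $\chi$ of cardinalities, sort the resulting $m_j$ in decreasing order, and lay out the blocks $I_j$ consecutively in $S$ so that $f$ picks out the smallest block; the verification for the two extreme cases $|\JJ|=1$ and $\JJ$ all singletons is identical to the paper's.
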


\hide{Now, if one wants to prove Conjecture~\ref{rational main}, then here is another collection $\mathfrak{X}''$ which suffices.
\begin{lem}\label{stronger disjoint union}
    Fix $r,k<\infty$. Then there exists a finite integer $n$ such that the following holds:

    Consider an $r$-coloring $C'$ of the set of $n$-families. Also, suppose there is a coloring $c$ of the subsets of $\{1,\dots, n\}$, such that 
    \[C'(\II) = c\circ f(\II),\]for every $n$-family $\II$ there does not exist $I'\in \II$ with $I'\neq f(\II)$, where $|f(\II)| \le 1+|I'|$. 
    
    Then, there exists disjoint $I_1,\dots, I_k\subset [n]$ such that the desired pattern the collection
    \[\{\JJ\circ \{I_1,\dots,I_k\}:\JJ\textrm{ is a $k$-family}\}\] is monochromatic under $C'$.
        
    \begin{proof}
         
        Using the same arguments as in Lemma~\ref{psuedo disjoint union} (involving hypergraph Ramsey and Folkman's Theorem), we may assume WLOG that that we have $S\subset [n]$ and $\vec{m}\in \N^{2^{k+1}}$ such that: 
        \[\{S'\subset S: |S'| = \sum_{j\in J} m_j\textrm{ for some non-empty }J\subset [2^{k+1}]\}\]
        is monochromatic under $c$. 

        By reordering our entries, we may assume $m_1\ge m_2\ge \dots \ge m_{2^{k+1}}$. Now set
        \[m_1':= \sum_{t=1}^{2^k} m_t,\]
        \[m_2' := \sum_{t=2^k+1}^{2^k+2^{k-1}}m_t,\]and so on (generally, $t$ ranges from $1+2^{k+1}-2^{k+2-i}$ up to $2^{k+1}-2^{k+1-i}$ for $i\in [k]$). One may check that $m_i'\ge 2m_{i+1}'$ for $i\in [k-1]$ and $m_k'\ge 2$. Consequently,
        \[m_i'>1+\sum_{t=1}^{i-1}m_t'\]for each $i\in [k]$. 
        
        At the same time, since these $m_i'$ are sums of the $m_t$ (using disjoint indices), they `inherit' the previous property, meaning 
        \[\{S'\subset S: |S'| = \sum_{j\in J} m_j' \textrm{ for some non-empty }J\subset [k]\}\]
        is monochromatic under $c$. 
        
        We are now done by taking $I_1,\dots,I_k\subset S$ to be disjoint sets with 
        \[\max(I_1)<\min(I_2)\le \max(I_2)<\min(I_3)\le \dots \le \max(I_{k-1})<\min(I_k)\]
        and $|I_i| = m_{(k+1)-i}'$ for $i\in [k]$. The desired properties can readily be seen to hold, by considering our last two equations, and recalling our assumptions about $c$ and $C'$.  
    \end{proof}
\end{lem}}

\section{A reduction and outline of the rest of the paper}

\subsection{More definitions}

The following definitions shall play a central role in the remainder of our paper.

\begin{defn}
    Let $C$ be some partial-coloring of some semi-ring $\mathbf{R}$. Also, let $\mathfrak{X}$ be some collection of $n$-families.
    
    We say a vector $\vec{v}\in \mathbf{R}^n$ is \textit{$\mathfrak{X}$-consistent} (with respect to $C$), if\footnote{Here and throughout the paper, we adopt the convention that statements of the form `$C(x) = C(y)$' are treated as false unless $C(x),C(y)$ are well-defined under the partial-coloring $C$.}
    \[C(\varphi_\II(\vec{v})) =C(\prod_{i\in f(\II)}v_i)\]for every $n$-family $\II\in \mathfrak{X}$. (meaning the color of $\varphi_\II(\vec{v})$ is controlled by the color of it's ``leading term'' (i.e., $\varphi_{\{f(\II)\}}(\vec{v})$)). 
\end{defn}

\begin{defn}
    Let $\mathfrak{X}_{lower}$ denote the collection of $n$-families $\II$ where $|f(\II)|\le |I'|$ for all $I'\in \II$. We shall say \textit{lower-consistent} as shorthand for $\mathfrak{X}_{lower}$-consistent. 
\end{defn}

The next definition is inspired by Ramsey-theoretic ``arrow notation''.  
\begin{defn}
    Given integers $n$, a collection of $n$-families $\mathfrak{X}$, and set of semi-ring elements $S$, we say
    \[S[\mathfrak{X}] \to (n;r)\]
    if: for every $r$-coloring $C:S\to [r]$, we can find a vector $\vec{v}\in S^n$ which is $\mathfrak{X}$-consistent with respect to $C$.
\end{defn}

\subsection{Plan of attack}

It is the goal for the remainder of this paper to prove two propositions (see previous subsection for definitions). They will imply our main results, as we shall discuss momentarily.
\begin{prp}\label{int build v} 
    For every $r,n<\infty$, there exists some finite $S\subset \Q_+$ such that
    \[S[\mathfrak{X}_{lower}]\to (n;r).\]
\end{prp}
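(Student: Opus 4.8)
The goal of Proposition~\ref{int build v} is to produce a finite set $S\subset \Q_+$ such that any $r$-coloring of $S$ admits an $\mathfrak{X}_{lower}$-consistent vector $\vec v\in S^n$. The plan is to build $S$ inside $\Q_+$ in a way that lets us import Lemma~\ref{psuedo disjoint union}. Recall that lemma: for suitable large $N$, any $r$-coloring $C'$ of $N$-families which factors as $c\circ f$ on the sub-collection $\mathfrak{X}_{lower}$ yields disjoint $I_1,\dots,I_n\subset[N]$ with $\{\JJ\circ\{I_1,\dots,I_n\}:\JJ\text{ extreme}\}$ monochromatic under $C'$ --- wait, we want $n$-families here, so apply the lemma with its ``$k$'' equal to $n$. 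First I would fix the $N$ given by Lemma~\ref{psuedo disjoint union} for the parameters $(r,n)$.

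Next I would choose $S$. The natural choice is a set of the form $S=\{\varphi_\II(\vec w): \II\text{ is an }N\text{-family}\}$ for a cleverly chosen generic vector $\vec w\in\Q_+^N$, together with possibly the ``atoms'' $w_1,\dots,w_N$ themselves --- but the key requirement is a \emph{rigidity} property: distinct $N$-families $\II$ must give distinct values $\varphi_\II(\vec w)$, \emph{and} moreover the values $\varphi_\II(\vec w)$ should only coincide with $\varphi_{\II'}(\vec w)$ when $\II=\II'$ even after the compositions $\JJ\circ\{I_1,\dots,I_n\}$ are applied. This is where working over $\Q_+$ rather than $\N$ is essential: we can take $w_i$ to be algebraically independent-looking rationals (e.g. $w_i = p_i/q$ for large distinct primes $p_i$, or $w_i$ chosen by a greedy/probabilistic argument so that no nontrivial additive-multiplicative relation holds among the finitely many quantities $\varphi_\II(\vec w)$ as $\II$ ranges over $N$-families). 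Concretely I would argue: the set of ``bad'' $\vec w$ (those satisfying some coincidence $\varphi_\II(\vec w)=\varphi_{\II'}(\vec w)$ with $\II\ne\II'$) is a finite union of proper subvarieties of $\Q_+^N$, hence we may pick $\vec w$ avoiding all of them; set $S$ to be the (finite) image.

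With such a rigid $\vec w$ and $S$ fixed, given an $r$-coloring $C:S\to[r]$, pull it back to a coloring $C'$ of $N$-families by $C'(\II):=C(\varphi_\II(\vec w))$, exactly as in the ``totally different Ramsey problem'' section. Because of rigidity, $C'$ is a genuine (total) $r$-coloring of $N$-families. Now the subtle point --- and the one I'd flag as the main obstacle --- is verifying the hypothesis of Lemma~\ref{psuedo disjoint union}, i.e. exhibiting a coloring $c$ of subsets of $[N]$ with $C'(\II)=c\circ f(\II)$ for all $\II\in\mathfrak{X}_{lower}$. This is \emph{not} automatically true for a generic $\vec w$; it is precisely the statement that we've reduced to, and it cannot hold for arbitrary $C$. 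So the actual structure of the argument must be different: rather than hoping $C'$ factors through $f$, we should apply the \emph{combinatorial} machinery (hypergraph Ramsey + Folkman) directly, which is what the proof of Lemma~\ref{psuedo disjoint union} does internally. In other words, I expect Proposition~\ref{int build v} to be proved by choosing $S$ so that the relevant sums-of-disjoint-products structure on $S$ is ``isomorphic'' (via $\vec w$) to the disjoint-union structure on subsets of $[N]$, then running the Ramsey/Folkman argument to extract a sub-structure on which $C$ depends only on the ``size'' (here, the value $\prod_{i\in f(\II)}w_i$ of the leading term), and finally reading off the $\mathfrak{X}_{lower}$-consistent vector as $v_j:=\prod_{i\in I_j}w_i$. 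The hard part is thus twofold: (i) constructing $\vec w\in\Q_+^N$ rigid enough that $C'$ is well-defined and that leading terms are detected correctly (distinctness of $\prod_{i\in I}w_i$ across relevant $I$, and the inequality $|f(\II)|\le|I'|$ translating to the correct comparison of leading terms), and (ii) checking carefully that the consistency equation $C(\varphi_\II(\vec v))=C(\prod_{i\in f(\II)}v_i)$ survives the composition, using $m_1\ge\cdots\ge m_n$ to guarantee the leading-term condition lands inside $\mathfrak{X}_{lower}$ --- essentially the computation at the end of the proof of Lemma~\ref{psuedo disjoint union}, now transported back through $\varphi$.
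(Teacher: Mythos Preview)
Your proposal has a genuine gap, and you have essentially identified it yourself: the hypothesis of Lemma~\ref{psuedo disjoint union} requires that $C'(\II)=c(f(\II))$ for all $\II\in\mathfrak{X}_{lower}$, i.e., that the color of $\varphi_\II(\vec w)$ depends only on the leading part $f(\II)$. But this is precisely the content of $\mathfrak{X}_{lower}$-consistency for the vector $\vec w$ --- it is what Proposition~\ref{int build v} asserts can be arranged, not something you can obtain from genericity of $\vec w$ or from running hypergraph Ramsey and Folkman ``directly''. Those tools, as used inside Lemma~\ref{psuedo disjoint union}, act on the coloring $c$ of \emph{subsets} of $[N]$; they presuppose that $c$ exists, which is exactly the point in dispute. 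No choice of $\vec w$, however rigid, forces an adversarial coloring $C$ of $S$ to factor through $f$ on $\mathfrak{X}_{lower}$. You have the logical flow of the paper reversed: Lemma~\ref{psuedo disjoint union} is used to deduce Theorem~\ref{alw result} \emph{from} Proposition~\ref{int build v}, not the other way around.

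The paper's actual proof of Proposition~\ref{int build v} is entirely different and constitutes the bulk of the work. It proceeds by induction on $n$ (Proposition~\ref{funny induction}): assuming one can find vectors $\vec u\in\Q_+^{n-1}$ that are lower-consistent (stably under dilation by any $q$ in a prescribed finite set $Q$), one appends a new coordinate via the \emph{stable extension} result (Proposition~\ref{stable ext}). That result --- given $\vec u$, produce $d$ and $x'$ so that $(d\vec u,x')$ is $\mathfrak{X}_{\text{new}}$-consistent, stably in $q\in Q$ --- is where the real content lies. Its proof (Sections~\ref{shifting lemmas}--\ref{finer multitask}) uses the multidimensional polynomial van der Waerden theorem to handle ``shifts'' of the last coordinate by ratios $\prod_{a\in A}u_a/\prod_{b\in B}u_b$ (this is where $|A|>|B|$ for $(A,B)\in\newp(\mathfrak{X}_{lower})$ is used, to make the relevant polynomials have zero constant term), together with an abstract ``multi-task Ramsey'' framework to synchronize consistency across all families $\II$ simultaneously. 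None of this machinery appears in your sketch; the approach via a single generic $\vec w$ and Lemma~\ref{psuedo disjoint union} cannot be made to work.
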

\noindent With a bit more work, we can generalize our methods, and further prove that:
\begin{prp}\label{rat build v}
    For every $r,n<\infty$, there exists some finite $S\subset \Q_+$ such that
    \[S[\mathfrak{X}_{all}]\to (n;r),\]where $\mathfrak{X}_{all}$ is the collection of all $n$-families.
\end{prp}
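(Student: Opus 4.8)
\textbf{Proof proposal for Proposition~\ref{rat build v}.}

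The plan is to mimic the proof strategy behind Proposition~\ref{int build v}, which (per the outline) should go via building $\vec{v}$ iteratively so that the coloring $C_{\vec v}'$ on $n$-families becomes ``locally of the form $c\circ f$'' on the relevant structured collection, and then invoking the combinatorial lemma (Lemma~\ref{psuedo disjoint union}, resp.\ its stated stronger analogue Lemma~\ref{stronger disjoint union}) to finish. For Proposition~\ref{rat build v} the only change is that we need $\vec v$ to be $\mathfrak{X}_{all}$-consistent rather than merely $\mathfrak{X}_{lower}$-consistent, i.e.\ the control of $C(\varphi_\II(\vec v))$ by the leading term $C(\prod_{i\in f(\II)}v_i)$ must hold for \emph{every} $n$-family $\II$, with no size hypothesis on $f(\II)$ relative to the other parts. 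So the first step is to isolate exactly where the proof of Proposition~\ref{int build v} uses the hypothesis $|f(\II)|\le|I'|$, and to see that it enters only through a ``doubling/absorption'' step where one rational building block is required to dominate the sum of all previously constructed blocks.

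Concretely, I would construct $v_1,\dots,v_n\in\Q_+$ in order, maintaining a rapidly increasing sequence of scales, so that when we evaluate $\varphi_\II(\vec v)=\sum_{I\in\II}\prod_{i\in I}v_i$, the term $\prod_{i\in f(\II)}v_i$ (the one containing the largest index appearing anywhere in $\II$) is so much larger than every other term $\prod_{i\in I'}v_i$ that the color $C(\varphi_\II(\vec v))$ is forced to agree with $C(\prod_{i\in f(\II)}v_i)$. The reason this is subtler than in the lower-consistent case: if $f(\II)$ is small (few indices) while some other part $I'$ is large, then $\prod_{i\in f(\II)}v_i$ is a product of few $v_i$'s and $\prod_{i\in I'}v_i$ a product of many, so to guarantee domination we cannot simply rely on ``more factors means bigger'' — we must choose the magnitudes of the $v_i$ to grow fast enough that even a single large-index factor beats an arbitrarily long product of small-index factors. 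This is arrangeable because $\Q_+$ has no Archimedean obstruction: we get to pick the $v_i$ to grow, say, doubly exponentially, so $v_{i+1}$ exceeds any bounded product of $v_1,\dots,v_i$. I would set up the recursion on the scales together with a colour-stabilization argument (iterated pigeonhole / Ramsey on the finitely many colours), exactly paralleling the argument for Proposition~\ref{int build v} but replacing ``$|f(\II)|\le|I'|$'' bookkeeping with ``$\max(f(\II))$ is the global maximum index'' bookkeeping, which is automatically available. Then one feeds the resulting locally-$(c\circ f)$ structure into Lemma~\ref{stronger disjoint union} (the all-families version) to extract the disjoint $I_1,\dots,I_k$ and hence the monochromatic pattern of Theorem~\ref{rational main}, though for the present proposition we only need the $\mathfrak{X}_{all}$-consistent vector itself.

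The main obstacle I anticipate is precisely the interplay between additive structure (needed for Folkman's theorem to produce monochromatic finite sums, since the leading term's colour must be stable under adding the ``junk'' lower-order terms) and multiplicative structure (needed because the building blocks are products $\prod_{i\in I_j}v_i$). In the lower-consistent regime one can arrange that the junk terms are all divisible by a common large factor and small relative to the leading term, so a single application of Folkman's theorem over an appropriate arithmetic progression of exponents suffices; in the all-families regime the junk terms can be of wildly different multiplicative ``shape'' than the leading term, so one likely needs to iterate the colour-stabilization more carefully — essentially a nested Folkman argument where at each level one fixes the colour contribution of parts of a given size — which is the source of the larger quantities ($2^{k+1}$ building blocks rather than $k$) seen in Lemma~\ref{stronger disjoint union}. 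I would expect the clean way to organize this is to prove a single lemma producing, inside any finite $S\subset\Q_+$ obtained after enough iterations, a long geometric-like progression on which $C$ is ``leading-term controlled'' simultaneously for all families, and then to note $n$ can be taken as large as needed by taking $S$ large; the routine but lengthy verification that all the required domination inequalities hold for the chosen growth rate is what I would defer rather than grind through here.
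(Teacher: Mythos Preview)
Your proposal rests on a mistaken premise: that by choosing the $v_i$ to grow fast enough, the leading term $\prod_{i\in f(\II)}v_i$ ``dominates'' the other terms of $\varphi_\II(\vec v)$ and thereby \emph{forces} $C(\varphi_\II(\vec v))=C(\prod_{i\in f(\II)}v_i)$. For an arbitrary finite colouring of $\Q_+$ this is simply false --- relative magnitude tells you nothing about colour. (Take e.g.\ $C(q)$ to be the parity of $\lfloor \log_2 q\rfloor$, or any colouring with $C(q)\ne C(q+1)$ for all integers $q$: then $C(N)$ and $C(N+\varepsilon)$ can differ no matter how small $\varepsilon$ is relative to $N$.) So the whole ``domination / rapid growth / doubly exponential'' picture, and the phrase ``no Archimedean obstruction'', are red herrings; the problem is purely combinatorial and insensitive to size. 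Likewise your description of the lower-consistent case (``the junk terms are all divisible by a common large factor and small relative to the leading term, so a single application of Folkman suffices'') is not how Proposition~\ref{int build v} is proved either; Folkman's theorem enters only in Lemma~\ref{psuedo disjoint union}, \emph{after} a consistent $\vec v$ has been built by completely different means.

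What the paper actually does is this. Building $\vec v$ proceeds by induction on $n$ via a ``stable extension'' step (Proposition~\ref{stable ext} for the lower case, Lemma~\ref{full extension} for the full case): given $\vec u\in\Q_+^{n-1}$, one finds a dilation factor $d$ and a last coordinate $x'$ so that $(d\vec u,x')$ is $\mathfrak{X}$-consistent for all \emph{new} $n$-families (those with $n\in f(\II)$), and moreover this consistency is stable under dilating by any $q$ in a prescribed finite set. The mechanism for producing $x'$ is not growth but the \emph{multidimensional polynomial van der Waerden theorem} (Theorem~\ref{mpvdw}): one expresses $\varphi_\II(\vec v)-\varphi_{\{f(\II)\}}(\vec v)$ as a linear combination of ratios $\rho_{(A,B)}(\vec u)=\prod_{a\in A}u_a/\prod_{b\in B}u_b$, and polynomial vdW locates a common scale $d$ and base point at which adding all these shifts preserves the colour. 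The role of the hypothesis $|A|>|B|$ in the lower case is purely algebraic --- it makes the associated polynomials $X\mapsto X^{|A|-|B|}$ have zero constant term, which is the hypothesis of polynomial vdW. For $\mathfrak{X}_{all}$ this fails in general, and the fix (Section~\ref{sketch of new}) is to process the new families in increasing order of $|f(\II)|$: writing $S_t=f(\II_t)\setminus\{n\}$, the relevant pairs $(A,B)\in\Omega_t$ then satisfy $|A|+|S_t|>|B|$ (since $A\neq\emptyset$), which is exactly the degree condition needed in Lemma~\ref{general term shift}; the nesting $\Omega_t\supset\Omega_{t+1}$ lets one run the backtracking of Proposition~\ref{comp stab to multistab} across these stages. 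None of this is about magnitudes, and your proposal would need to be rebuilt from scratch along these lines.
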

\noindent In light of Section~\ref{family disjoint union} (specifically, Lemma~\ref{psuedo disjoint union} and Theorem~\ref{DUT}), we see that Proposition~\ref{int build v} will re-prove Theorem~\ref{alw result} and Proposition~\ref{rat build v} will imply Theorem~\ref{rational main}.

\hide{As one might guess, our proofs of the two propositions above are of a very similar flavor. We \zh{FINISH TRANSITION OR DELETE}}

Our strategy will use lots of auxiliary colorings and \hide{consequences of compactness\footnote{Note that in the statement of Lemma~\ref{build v}, we would be equally happy if $S$ was taken to $\mathbf{R}$, rather than a finite set. However, it will be crucial to our induction approach that $S$ can be taken to be finite. Of course, standard compactness arguments tell us that these are basically equivalent (unless, potentially $\mathbf{R}$ is uncountable, which is not very relevant to the current paper); however, we have written this assumption to emphasize its importance here.}}clever back-tracking during an inductive process, in a rather delicate manner. Indeed, at a glance (or when discussed informally), it is not obvious that our argument is non-circular. 

\hide{With this in mind, we will try to take care explaining things. In the remainder of subsection, we shall attempt to outline the key steps of the proof. Even then, the reader most likely will only have a rather fuzzy idea of what is going on.\zh{TODO: add a careful overview of things}}In a future draft, we will take care to motivate what is going on; but for the time being we apologize for presenting jarring and unmotivated proofs.

\hide{Rather than dive immediately into the nitty-gritty details of the more technical Theorem~\ref{integer main}, we shall first dedicate a subsection to the case of Theorem~\ref{rational main} an easier}

\section{How to induct}

\subsection{A technical result}
For convenience, we say an $n$-family $\II$ is called ``new'' if $n\in f(\II)$ (our reasoning for this phrase is that during induction, these correspond to the new constraints we need to worry about). Given a collection $\mathfrak{X}$ of $n$-families, we define $\newp(\mathfrak{X})$ to be set the of $(A,B)$ such that $\{A, B\cup \{n\}\}\subset \II$ for some $\II\in \mathfrak{X}$ (in some sense, fractions of the form `$\frac{\prod_{a\in A}v_a}{\prod_{b\in B}v_b}$' are generators of something which we will see is very important). 

Our key technical result is the following.
\begin{prp}[Stable extension]\label{stable ext}
    Let $\mathfrak{X}$ be a collection of $n$-families $\II$ which are all new. 
    
    Furthermore, suppose that $|A| > |B|$ for each $(A,B)\in \newp(\mathfrak{X})$.

    Then, for every $r<\infty$ and finite $Q\subset \Q_+$, there exist finite sets $D \subset \N,Q' \subset \Q_+$ and $\Lambda\subset \Q_{\ge 0}^{\newp(\mathfrak{X})}$, such that:

    Given any $\vec{u}\in \Q_+^{n-1}$, $x\in \Q_+$ and $r$-coloring $C:\Q_+\to [r]$, we can find $d\in D, q' \in Q'$ and $\vec{\lambda}\in \Lambda$ and define
    \[x':= q'\cdot \left(x+\sum_{(A,B)\in \newp(\mathfrak{X})}\lambda_{(A,B)}\frac{\prod_{a\in A}u_a}{\prod_{b\in B}u_b} \right),\]
    \[\vec{v}:= (d\cdot  u_1,\dots, d\cdot u_{n-1}, x'),\]so that $q\cdot \vec{v}$ is $\mathfrak{X}$-consistent (with respect to $C$) for every $q\in Q$. 
\end{prp}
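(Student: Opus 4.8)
The plan is to prove this by induction on $n$, or rather to set up the machinery so that a single ``stable extension'' step can be iterated; but within this one proposition the key is a compactness-plus-coloring argument that controls how adding the new coordinate $x'$ interacts with all the constraints coming from new families in $\mathfrak{X}$. First I would unpack what $\mathfrak{X}$-consistency of $q\cdot \vec v$ means: for each $\II \in \mathfrak{X}$ we need $C(q\cdot \varphi_\II(\vec v)) = C(q \cdot \prod_{i\in f(\II)} v_i)$, and since every $\II$ is new, $f(\II)$ contains the last coordinate, so $\prod_{i\in f(\II)} v_i = x' \cdot \prod_{i \in f(\II)\setminus\{n\}} d u_i$. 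The point of introducing $d$ (scaling the old coordinates) and $q'$ (scaling $x'$) is to make these leading terms land in a single ``dilation-invariant'' structure; the role of the $\lambda_{(A,B)}$ shift is that $\varphi_\II(\vec v) - \prod_{i\in f(\II)} v_i$ is a sum over the non-leading parts $I \in \II$, and after dividing through by $\prod_{i \in f(\II)\setminus\{n\}} d u_i$ each such term is a scalar multiple of a ratio $\frac{\prod_{a\in A} u_a}{\prod_{b\in B} u_b}$ with $(A,B) \in \newp(\mathfrak{X})$ --- here $|A| > |B|$ is exactly the hypothesis, and it is what guarantees these ratios behave like ``honestly large'' quantities we can push to dominate.

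The main step I would carry out: fix the finite set $Q$ and the bound $r$. The set of $n$-families $\II$ under consideration is finite (they live in $[n]$), hence $\newp(\mathfrak{X})$ is finite, so $\Q_{\ge 0}^{\newp(\mathfrak{X})}$ is a finite-dimensional space and we are free to search for $\vec\lambda$ in a large finite grid $\Lambda$. Next, observe that for any fixed choice of ``shape'' of $\II$ (i.e. which parts it has, and which is leading), the quantity $q \cdot \varphi_\II(\vec v)$ can be written, after the substitutions, as $q' \cdot (\text{stuff depending on } d, \vec u, x, \vec\lambda)$; the idea is to choose $d$ first (large, from a Folkman/Schur-type set so that all the relevant sums of the exponent-type data are monochromatic under an auxiliary coloring induced by $C$), then choose $q'$ and $\vec\lambda$ to align the ``error'' $\varphi_\II(\vec v) - \prod_{i \in f(\II)} v_i$ so it does not change the color. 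Concretely I would: (1) define for each $q \in Q$ and each pattern the induced coloring of the relevant integer/rational parameter; (2) apply Ramsey's theorem and Folkman's theorem (as in Lemma~\ref{psuedo disjoint union}) to pass to a large homogeneous set $D$ of candidate dilations $d$; (3) use the inequality $|A|>|B|$ together with a greedy/iterated choice of the $\lambda_{(A,B)}$ (largest leading term first) to ensure that adding each ratio term preserves membership in the color class fixed in step (2); (4) finally choose $q'$ from a finite set $Q'$ to absorb the overall scaling so that consistency holds simultaneously for all $q \in Q$.

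The hard part --- and the part I expect to be the genuine obstacle --- is step (3): making the choice of $\vec\lambda \in \Lambda$ and the choice of $d \in D$ work \emph{simultaneously} for all $\II \in \mathfrak{X}$ and all $q \in Q$, when these families overlap and share parts. A shift $\lambda_{(A,B)}$ that fixes the color for one family may disturb another; the resolution should be the observation that, because every $\II$ is new and $|A|>|B|$ on $\newp(\mathfrak{X})$, one can order the pairs $(A,B)$ by (a lexicographic-type refinement of) $|A|$ and build $\vec\lambda$ coordinate-by-coordinate so that each newly added term is, relative to the auxiliary Folkman structure, negligible against the already-fixed leading behavior --- i.e. it lands in the same homogeneous block. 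This is the analogue of the ``$m_1 \ge m_2 \ge \dots$'' reordering trick in the proof of Lemma~\ref{psuedo disjoint union}, now carried out at the level of rational dilations rather than set sizes. I would also keep track carefully that $D, Q', \Lambda$ depend only on $r, Q, \mathfrak{X}$ and not on the (arbitrary, adversarial) inputs $\vec u, x, C$ --- this uniformity is what will let the proposition be iterated in the proofs of Propositions~\ref{int build v} and~\ref{rat build v}, and it is why $S$ must be finite there.
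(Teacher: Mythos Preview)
Your proposal correctly identifies the algebraic structure --- that $\varphi_\II(\vec v)-\prod_{i\in f(\II)}v_i$ decomposes into terms indexed by $\newp(\mathfrak{X})$, and that the hypothesis $|A|>|B|$ governs how these behave under dilation by $d$ --- but it misses the two mechanisms that actually make the proof go through.

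First, the Ramsey input is not Folkman or ordinary Ramsey but the \emph{multidimensional polynomial van der Waerden theorem} of Bergelson--Leibman. When you dilate $\vec u$ by $d$, each ratio $\rho_{(A,B)}(\vec u)$ gets multiplied by $d^{|A|-|B|}$, so the ``shifts'' you need to absorb are genuinely polynomial in $d$ (with varying degrees across coordinates). The hypothesis $|A|>|B|$ is exactly what makes these polynomials have zero constant term (``good''), which is the condition needed to invoke polynomial van der Waerden. An ordinary Folkman-type homogeneity on $D$ will not give you control over sets of the form $\{\tilde x+\vec p(d):\vec p\in P\}$ for polynomial $\vec p$; this is the content of the paper's Lemma~\ref{single shift} and Lemma~\ref{general term shift}.

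Second, your plan for step~(3) --- ordering the pairs $(A,B)$ and building $\vec\lambda$ greedily so that later terms are ``negligible'' against earlier ones --- does not work as stated, and the analogy with the $m_1\ge m_2\ge\cdots$ trick of Lemma~\ref{psuedo disjoint union} is misleading. The difficulty is that fixing the color for one family $\II$ requires applying the shifting lemma, which perturbs both $x$ and $\vec u$ (via some $d$ and $\vec\delta$), and this perturbation can destroy consistency for families already handled. The paper resolves this with a \emph{backtracking} scheme (Propositions~\ref{compactness implies multitask} and~\ref{comp stab to multistab}): one processes the families $\II_1,\dots,\II_\ell$ in order, but at step $t$ one applies the shifting lemma with a set $\Xi_t$ of target shifts that has been inflated to anticipate \emph{all possible} perturbations that steps $t+1,\dots,\ell$ might introduce. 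Since at each step the shifting lemma outputs $(d,\vec\delta)$ from a \emph{finite} set depending only on $(r,\Xi_t)$, one can compute these future perturbations in advance and absorb them into $\Xi_t$. This is where the uniformity you flagged (independence of $D,Q',\Lambda$ from $\vec u,x,C$) is actually used in the argument, not just in the downstream induction.
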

\begin{rmk}
    Here, the phrase `stable' refers to how the $\mathfrak{X}$-consistency of $\vec{v}$ is `stable' under dilating by any $q\in Q$. \zh{TODO: adjust wording. I don't like how this sentence reads.}
\end{rmk}
\thide{\begin{rmk}\label{homogen discuss}
    Note that we cannot ask for more, in the sense that we cannot ensure that 
    \[C( \varphi_\II(q\cdot \vec{v})) = C(\varphi_{\{f(\II)\}}(\vec{v}))\]for all $\II\in \mathfrak{X},q\in Q$. Essentially this is because dilation is a ``homogenous transformation''; we can either enforce that \zh{explain more}
\end{rmk}}

\noindent We defer the proof of Proposition~\ref{stable ext} to the remainder of the paper. Instead, here we now show how to deduce our main theorems with some \hide{clever}routine induction.

\begin{rmk}\label{unnecessary}
    For the purposes of proving Hindman over $\Q$, it is only important that we can find some $d \in D$ and some $x'\in \Q_+$ so that $q\cdot \vec{v}$ is $\mathfrak{X}$-consistent for all $q\in Q$.

    We include the above statement (which says that we can start with any $x\in \Q_+$, and find a desired $x'$ as some mild tweak of $x$) for two reasons. Firstly, this additional property will help motivate the proof of Proposition~\ref{stable ext}, especially the details in Section~\ref{shifting lemmas}.

    Secondly, this additional property hints at potential applications for the integer setting. Indeed, if $u_1,\dots,u_{n-1},x$ are elements of $\N$, which satisfy certain divisibility conditions, then we can ensure that $x'\in \N$. The current issue with carrying this through, is that it is very difficult to get the new $x'$ to have any divisibility relations between other coordinates of $\vec{v}$ (which prevents us from carrying out the induction).
\end{rmk}

\hide{\begin{rmk}
    The key property of the above proposition (which allows us to extend Alweiss's result to the integer setting) is that the sets $Q^*,\Lambda$ depend only on $\mathfrak{X}$ and $r$ (and are independent of $x,\vec{u},$ and $d_0$). This means that we can predict what $\vec{v}$ will look like, up to finitely many simple ``perturbations''. This allows us to pick $x,\vec{u},d_0$ cleverly in advance, so that no matter the perturbation, we end up at a desirable outcome.
\end{rmk}

\begin{rmk}
    If one looks at Alweiss's ``shifting argument'', and verifies that it indeed succeeds (which is somewhat non-trivial, as it is a complicated process), then it is not too hard to convince oneself that the stated conclusions of Proposition~\ref{stable ext} must hold (by analyzing said shifting argument, due to its finitary nature). The tricky part, is perhaps considering these conclusions in the first place, and realizing they are useful (the inclusion of the factor $d_0$ is absolutely \textit{critical}, and very easy to overlook). 
\end{rmk}}

\subsection{Establishing our main theorem}\label{the induction}\hide{We will use Proposition~\ref{stable ext} to stay inside the integers, by taking $d_0$ to absolutely huge, causing each of the `steps' (coming from summands in the definition of $x'$) to lie inside some large lattice. 

More formally, we shall now establish the following result, which clearly implies Proposition~\ref{int build v}, and thus establishes our main result (Theorem~\ref{integer main}).}

\begin{prp}\label{funny induction}
    Fix any choice of (positive) integers $r,n<\infty$, along with a finite set $Q\subset \Q_+$.

    For every $r$-coloring $C:\Q_+\to [r]$, we can find $\vec{v}\in \Q_+^n$ such that $q\cdot \vec{v}$ is lower-consistent with respect to $C$ for all $q\in Q$.

    \thide{\begin{rmk}
       The below proof is a little longer than it needs to be. This is because it came from editing a faulty proof of Hindman over $\N$ (where I had some additional requirements in the induction hypothesis).  \zh{TODO: simplify this proof, it should honestly be 3 paragraphs or something. afterwards, delete this Remark!}
    \end{rmk}}
    \begin{proof}
        We write $\CC_n$ to denote the claim that: for every finite $Q\subset \Q_+$, and finite-coloring $C:\Q_+\to [r]$, that we can find $\vec{v}\in \Q_+^n$ such that $q\cdot \vec{v}$ is consistent with respect to $C$ for all $q\in Q$.

        We have that $\CC_1$ is true, by just taking any 1-dimensional vector $\vec{v} \in \Q_+^{1}$ (since vectors of length one are vacuously consistent with respect to all colorings $C$).

        Now, assume $\CC_n$ for some $n\ge 1$. We will deduce $\CC_{n+1}$. Upon establishing thing, Proposition~\ref{funny induction} clearly follows by induction (completing our proof). 
        
        Let $\mathfrak{X}= \mathfrak{X}_{n+1}$ be the collection of $(n+1)$-families $\II$ where there does not exist $I'\in \II$ with $I'\neq f(\II)$ and $|f(\II)|< |I'|$. Write
        \[\mathfrak{X} = \mathfrak{X}_{\text{new}}\sqcup \mathfrak{X}_{\text{old}},\]where $\mathfrak{X}_{\text{new}}$ is the subcollection of new $(n+1)$-families $\II\in \mathfrak{X}$ (meaning that $(n+1)\in f(\II)$), and $\mathfrak{X}_{\text{old}}$ is the remaining families in $\mathfrak{X}$.
        
        Write $\Omega := \newp(\mathfrak{X}_{\text{new}})$, and note that by definition, we will have $|A|<|B|$ for each $(A,B)\in \Omega$. Indeed, for each such pair $(A,B)$, there exists $\II\in \mathfrak{X}_{\text{new}}$ where $\{A,B\cup \{n+1\}\}\subset \II$, and we'll have that
        \[|B| = f(\II)-1< |A|\](since $A=I'$ for a set $I'\in \II$ with $I'\neq f(\II)$). 

        Now consider any pair $\pi = (r,Q)$, where $r<\infty$ and $Q\subset \Q_+$ is a finite subset. 
        
        By Proposition~\ref{stable ext}, we can find\footnote{Technically $Q_*$ will also depend on $n$, but we are supressing this dependence, as $n$ is that to be fixed here.} a finite set \[Q_* = Q_*(\pi)\subset\N\subset  \Q_+,\] such that, given any $\vec{u}\in \Q_+^n$ and $C:\Q_+\to [r]$, we can pick any $q_* = q_*(\vec{u},C) \in Q_*$ and $x'(\vec{u},C) \in \Q_+$ where defining:
        \[\vec{v} := (q_*\cdot u_1,\dots, q_*\cdot u_n,x')\]
        we get that $q\cdot \vec{v}$ is $\mathfrak{X}_{\text{new}}$-consistent for all $q\in Q$.

        We must now argue (considering $\pi=(r,Q)$ fixed), that for every $r$-coloring $C:\Q_+\to [r]$, we can find $\vec{v}\in \Q_+^{n+1}$ where $q\cdot \vec{v}$ is $\mathfrak{X}$-consistent (with respect to $C$) for all $q\in Q$.

        First take $Q_* = Q_*(\pi)$, which is a finite subset of $\Q_+$. Also let $Q' := \{qq_*:q\in Q,q_*\in Q_*\}$, which is clearly a finite set. 
        
        Next, fix a coloring $C:\Q_+\to [r]$.
        
        By $\CC_n$, we know that there must exist some $\vec{u}\in \Q^n$ where $q'\cdot \vec{u}$ is $\mathfrak{X}_{\text{old}}$-consistent (with respect to $C$) for all $q'\in Q'$.

        Next, consider $q_*= q_*(\vec{u},C),x' = x'(\vec{u},C)$, and define 
        \[\vec{v} = (q_*\cdot u_1,\dots, q_*\cdot u_n,x')\in \Q_+^{n+1}.\]We claim that $q\cdot \vec{v}$ is $\mathfrak{X}$-consistent for each $q\in Q$. Suppose not, then there would exist some $\II\in \mathfrak{X}$ and $q\in Q$ where
        \[C(\varphi_\II(q\cdot \vec{v})) \neq C(\varphi_{\{f(\II)\}}(q\cdot \vec{v})).\]By construction of $\vec{v}$, the above situation is impossible when $\II \in \mathfrak{X}_{\text{new}}$; so we suppose $\II\in \mathfrak{X}_{\text{old}}$. Since $\vec{v}|_{[n]} = q_*\cdot \vec{u}$, this would mean that
        \[C(\varphi_{\II}((qq_*)\cdot \vec{u})) \neq C(\varphi_{\{f(\II)\}}((qq_*)\cdot \vec{u}),\]which implies $(qq_*)\cdot \vec{u}$ is not $\mathfrak{X}_{\text{old}}$-consistent (with respect to $C$). But this is also a contradiction, due to our choice of $\vec{u}$, since $qq_*\in Q'$. 
        
        It follows that $\vec{v}$ does indeed have the desired properties, concerning $Q$. And so, since $r,Q$ can be arbitrarily chosen, we see $\CC_{n+1}$ holds, completing our proof.
    \end{proof}
\end{prp}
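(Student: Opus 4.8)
The plan is to prove the statement by induction on $n$, writing $\CC_n$ for the assertion of the proposition at dimension $n$ (with $r$ held fixed, but for all finite $Q\subset\Q_+$). The base case $\CC_1$ is immediate: a length-one vector $\vec{v}=(v_1)$ is lower-consistent with respect to every $C$, the only relevant $1$-family being $\{\{1\}\}$, for which $\varphi_\II(q\cdot\vec{v})=qv_1=\varphi_{\{f(\II)\}}(q\cdot\vec{v})$ tautologically. So all the work lies in the inductive step: assuming $\CC_n$, fix $r$ and a finite $Q\subset\Q_+$, and for an arbitrary coloring $C$ exhibit a vector $\vec{v}\in\Q_+^{n+1}$ with $q\cdot\vec{v}$ lower-consistent for all $q\in Q$.

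I would split the relevant $(n+1)$-families $\II$ into the \emph{new} ones, those with $n+1\in f(\II)$ (the constraints genuinely created by the new coordinate), and the \emph{old} ones, which --- being untouched by the $(n+1)$-st coordinate --- are in effect $n$-families; call these subcollections $\mathfrak{X}_{\mathrm{new}}$ and $\mathfrak{X}_{\mathrm{old}}$. For the new families the hypothesis of Proposition~\ref{stable ext} is automatic from the lower condition $|f(\II)|\le|I'|$: a pair $(A,B)\in\newp(\mathfrak{X}_{\mathrm{new}})$ arises from two distinct parts $A$ and $B\cup\{n+1\}$ of some $\II\in\mathfrak{X}_{\mathrm{new}}$, and since $\II$ is new the part containing $n+1$ must be $f(\II)$, so $B\cup\{n+1\}=f(\II)$, $A\ne f(\II)$, and $|B|=|f(\II)|-1<|f(\II)|\le|A|$. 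Hence Proposition~\ref{stable ext} applies to $\mathfrak{X}_{\mathrm{new}}$ with the set $Q$, producing finite sets $D,Q',\Lambda$. The decisive point is that these depend only on $\mathfrak{X}_{\mathrm{new}}$, $r$ and $Q$, and not on $C$ nor on the first $n$ coordinates we will later choose; in particular the common dilation that Proposition~\ref{stable ext} applies to those first $n$ coordinates ranges over a finite set $Q_\ast$ (namely $D$) fixed in advance.

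Now the recursion closes. Enlarge $Q$ to the still-finite set $\widetilde{Q}:=\{q q_\ast: q\in Q,\ q_\ast\in Q_\ast\}$ and apply $\CC_n$ to obtain $\vec{u}\in\Q_+^n$ that is $\mathfrak{X}_{\mathrm{old}}$-consistent after dilation by every element of $\widetilde{Q}$. Feed this particular $\vec{u}$, the coloring $C$, and any auxiliary $x\in\Q_+$ into Proposition~\ref{stable ext}: it returns $q_\ast\in Q_\ast$ and $x'\in\Q_+$ for which $\vec{v}:=(q_\ast u_1,\dots,q_\ast u_n,x')$ has $q\cdot\vec{v}$ consistent with every $\II\in\mathfrak{X}_{\mathrm{new}}$, for all $q\in Q$. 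The old constraints survive as well: if some $\II\in\mathfrak{X}_{\mathrm{old}}$ and some $q\in Q$ made $q\cdot\vec{v}$ inconsistent, then, because $\vec{v}$ agrees with $q_\ast\cdot\vec{u}$ on the first $n$ coordinates and $\II$ does not involve the last one, $(q q_\ast)\cdot\vec{u}$ would fail $\mathfrak{X}_{\mathrm{old}}$-consistency --- contradicting the choice of $\vec{u}$, since $q q_\ast\in\widetilde{Q}$. Thus $\vec{v}$ witnesses $\CC_{n+1}$, completing the induction.

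The crux, and the reason the set-up has to be this careful, is non-circularity: $\CC_n$ must be invoked \emph{before} we know which dilation $q_\ast$ Proposition~\ref{stable ext} will hand back, so it is essential that $q_\ast$ lies in a finite set determined purely by $r$, $Q$ and $\mathfrak{X}_{\mathrm{new}}$. This uniformity --- the ``stability under dilation'' packaged into Proposition~\ref{stable ext} --- is exactly what makes the recursion well-founded; granting that proposition, the argument above is essentially bookkeeping, the only other things to check being the correctness of the splitting $\mathfrak{X}=\mathfrak{X}_{\mathrm{new}}\sqcup\mathfrak{X}_{\mathrm{old}}$ and the elementary size inequality feeding into Proposition~\ref{stable ext}.
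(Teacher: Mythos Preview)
Your proposal is correct and follows essentially the same approach as the paper's proof: induct on $n$, split $\mathfrak{X}_{lower}$ into new and old families, verify the size hypothesis $|A|>|B|$ on $\newp(\mathfrak{X}_{\mathrm{new}})$, extract the finite dilation set $Q_\ast$ from Proposition~\ref{stable ext} before choosing $\vec{u}$, invoke $\CC_n$ on the enlarged set $\widetilde{Q}=QQ_\ast$, and then extend by one coordinate. Your closing remark about non-circularity pinpoints exactly the reason the argument works and mirrors the paper's own emphasis on the finitary, coloring-independent nature of $Q_\ast$.
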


\section{Overview}

\subsection{A digression into yet another Ramsey problem}\label{digression}

We will start from a very general perspective. We note that, despite the generality of this subsection, and we will still end up needing to work with a modified version of the below (covered in Section~\ref{finer multitask}), thus the following should be viewed as a `toy model' for what we will do later on.

Let $X$ be some ground set, and let $\PP$ be a set of maps  $p:X\to X$, which is closed under composition (i.e., $p_1,p_2\in \PP\implies p_1\circ p_2\in \PP$). We will think of the elements $p\in \PP$ as being `perturbations' of our ground set. 

Also, we will often write $\CC$ to denote some collection of colorings of $X$. Given integer $r$, we write $\CC_{\le r}$ to denote the set $\{C\in \CC: C(X)\subset [r]\}$.

Now, let's introduce two Ramsey properties.
\begin{defn}\label{compact definition}
    Given a collection $\CC$ of colorings of $X$ and sets $P,P'\subset \PP$, we say $P'\to_\CC P$, if for every $x\in X$ and $C\in \CC$, we can find $p'\in P'$ such that the set
    \[\{p\circ p'(x):p\in P\}\]is monochromatic under $C$.

    We say that a triple $(X,\PP,\CC)$ is \textit{compactly Ramsey}, if for every $r<\infty$ and finite $P\subset \PP$, there exists a finite $P'
    \subset \PP$ where
    \[P'\to_{\CC_{\le r}} P.\] 
\end{defn}

\begin{rmk}\label{allusion}
    It turns out that a triple $(X,\PP,\CC)$ being compactly Ramsey is actually a very restrictive property (cf. Remark~\ref{too strong}). In lieu of this, it is hard to give examples, without placing complex stipulations on $\CC$. 
    
    However, if we take $X := \N$ and $\CC$ to be the set of all colorings of $\N$, then $(X,\PP,\CC)$ will be compactly Ramsey if $\PP$ is the collection of maps
    \[p_m :n\mapsto \max\{n,m\}\]for $m\in \N$.
\end{rmk}

\begin{defn}
    We say a triple $(X,\PP,\CC)$ is \textit{multi-task Ramsey}, if, for each $r<\infty$ and finite $P\subset \PP$, there exists a finite $P'\subset \PP$ where: 

    For every choice of $x\in X$ and colorings $(C_p\in \CC_{\le r})_{p\in P}$, we can find $p'\in P'$ such that
    \[C_p(p\circ p'(x))= C_p(p'(x))\]for all $p\in P$. (when this happens, we say that $P' \implies P^{\CC_{\le r}}$)
\end{defn}

We observe that the former condition implies the latter. The secret, it to use a lot of back-tracking.
\begin{prp}\label{compactness implies multitask}
    Suppose a triple $(X,\PP,\CC)$ is compactly Ramsey. Then it is also multi-task Ramsey.
\end{prp}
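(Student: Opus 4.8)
The plan is to iterate the compactly-Ramsey property along the finitely many maps of $P$, one at a time, using heavy back-tracking so that each new application is ``absorbed'' by the ones we already committed to. Enumerate $P = \{p_1, \dots, p_t\}$. We want to produce a finite $P' \subset \PP$ with the multi-task property. The key idea is to build a descending sequence of finite sets $P = P_t \supset$ (something) and auxiliary finite sets produced by compact-Ramseyness, in reverse order. Concretely, I would first apply the compactly-Ramsey hypothesis to the pair $(r, P)$ to get a finite $P^{(t)}$ with $P^{(t)} \to_{\CC_{\le r}} P$. But one application is not enough, because the colorings $C_{p_1}, \dots, C_{p_t}$ are different; so we need to iterate: having handled $p_t, p_{t-1}, \dots, p_{i+1}$, apply compact-Ramseyness to absorb $p_i$ as well. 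This suggests setting up a chain $Q_0 = \{\mathrm{id}\}$ (or simply $\{p_1\}$ to stay inside $\PP$), and for each $i$ letting $Q_i$ be a finite set with $Q_i \to_{\CC_{\le r}} (P \cup Q_{i-1})$ or some similar combination — the exact combination is what needs care.

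The cleanest route is probably the following. I would prove by induction on $|P|$ that for every finite $P$ and every $r$ there is a finite $P'$ with $P' \implies P^{\CC_{\le r}}$. For the inductive step, write $P = \{p_1\} \cup P_0$ with $|P_0| = |P| - 1$. By the inductive hypothesis there is a finite $P_0'$ with $P_0' \implies P_0^{\CC_{\le r}}$. Now we must also control $p_1$. The trick: apply the compactly-Ramsey hypothesis to the finite set $P_0' \cup (P_0' \circ P_0') \cup \cdots$ — more precisely, to a finite set $R$ large enough that composing its elements with things from $P_0'$ and $\{p_1\}$ stays inside a prescribed finite pool — to get a finite $P'$ with $P' \to_{\CC_{\le r}} R$. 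Then given $x$ and colorings $(C_p)_{p \in P}$, first use $P' \to_{\CC_{\le r}} R$ with the coloring $C_{p_1}$ to pick $p' \in P'$ making $\{p \circ p'(x) : p \in R\}$ monochromatic under $C_{p_1}$; in particular, for every $q \in P_0'$ we get $C_{p_1}(q \circ p'(x)) = C_{p_1}(p'(x))$ provided $\mathrm{id}$-like and $p_1$-composed elements are all in $R$. Then apply the property of $P_0'$ to the point $p'(x)$ and the colorings $(C_p)_{p \in P_0}$ to find $q \in P_0'$ with $C_p(p \circ q \circ p'(x)) = C_p(q \circ p'(x))$ for all $p \in P_0$; output $p'' := q \circ p' \in \PP$. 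One checks $C_p(p \circ p''(x)) = C_p(p''(x))$ for $p \in P_0$ directly, and for $p = p_1$ it follows because $p_1 \circ q$ and $q$ both land in $R$, so monochromaticity under $C_{p_1}$ from the first step forces $C_{p_1}(p_1 \circ q \circ p'(x)) = C_{p_1}(p'(x)) = C_{p_1}(q \circ p'(x))$.

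The main obstacle I anticipate is bookkeeping the finite set $R$ so that it simultaneously contains all the compositions we need to invoke — roughly, $R$ must contain $\{\,\mathrm{id}\,\}$-surrogate, all of $P_0'$, and $\{p_1 \circ q : q \in P_0'\}$, and the output of compact-Ramseyness must ``see'' all of these — while still being finite. Since $P_0'$ is finite and $\PP$ is closed under composition, the set $\{p_1 \circ q : q \in P_0'\} \cup P_0'$ is a finite subset of $\PP$, so this is fine, but the argument must be phrased to make the quantifier order correct: $R$ is chosen \emph{after} $P_0'$ but \emph{before} seeing $x$ and the colorings, and $P'$ is chosen after $R$. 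A secondary subtlety is the lack of a genuine identity in $\PP$; one handles this by noting that consistency statements $C(p \circ p'(x)) = C(p'(x))$ only ever compare values at points of the form $(\text{composition})(x)$, so we never actually need $\mathrm{id} \in \PP$ — we just need $p'(x)$ itself to be among the points $\{p \circ p'(x) : p \in R\}$, which we can arrange by including in $P$ (hence in $R$) a redundant map, or by a mild reformulation. I would spell out this last point carefully since it is exactly the kind of detail the paper flags as easy to overlook.
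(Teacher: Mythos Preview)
Your proposal is correct and follows essentially the same route as the paper: induct on $|P|$, let $P_0'$ witness the inductive hypothesis for $P_0 = P\setminus\{p_1\}$, set $R := P_0' \cup \{p_1\circ q : q\in P_0'\}$ (this is exactly the paper's $P_{new}$), apply compact-Ramseyness to $R$ to obtain your $P'$ (the paper's $P_{new}'$), and output $P'' := \{q\circ p' : q\in P_0',\, p'\in P'\}$. Your worry about needing an identity element is unfounded in the end --- as you yourself observe, the argument only compares $C_{p_1}$ at $q\circ p'(x)$ and $(p_1\circ q)\circ p'(x)$, both of which are hit by elements of $R$, so no $\mathrm{id}$-surrogate is required.
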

The proof will appear a bit later in the subsubsection. Before this, we will give a bit more discussion of some helpful concepts. 

Given $x\in X$ and $P\subset \PP$, one may find it useful to interpret the set
\[\{p(x): p\in P\}\]as being ``the $P$-neighborhood around $x$''.
Then the statment: 
\[\textrm{``we have
$C(p(x)) = C(x)$
for all $p\in P$,''}\]
\noindent now says (possibly after adding the identity map to $P$, which doesn't really matter): \[\textrm{``there exists some color class $\chi\subset X$ of $C$, so}\]\[ \textrm{that the $P$-neighborhood around $x$ is inside $\chi$.''}\]

    \begin{proof}[Proof of Proposition~\ref{compactness implies multitask}]
        We proceed by induction on inclusion.

        When $P=\emptyset$, there is nothing to prove (we can take $P' = \emptyset$ independently of $r$).

        Now suppose that for some $P\subset \PP$, and every $r<\infty$, that there exists a finite $P'= P'(r)$ such that $P' \implies P^{\CC_{\le r}}$. We claim that for each $p^*\in \PP$ and $r<\infty$, there exists $P''= P''(r)$ such that $P''\implies (P \cup \{p^*\})^{\CC_{\le r}}$. \dc{It might be nice to expand out this last definition to match what you write in the conclusion of the inductive step, i.e. ``We wish to find $P''$ so that for all $x,(C_q)_{q\in P\cup\{p^*\}}$, ...'' or whatever}

        Indeed, fix $r<\infty$ and set $P' := P'(r)$. With foresight, let $P_{new}:= P' \cup \{p^*\circ p':p'\in P'\}$. As our triple is compactly Ramsey, we can find a finite $P_{new}'$ such that $P_{new}'\to_{\CC_{\le r}} P_{new}$. Now, we claim that taking $P'' := \{p' \circ p_{new}':p'\in P',p_{new}'\in P_{new}'\}$ (which is clearly finite) suffices. 

        Indeed, for each $x\in X$ and possible outcome of $C_{p^*}\in \CC_{\le r}$, we can pick $p_{new}'\in P_{new}'$ so that writing $x' := p_{new}'(x)$, we have
        \[C_{p^*}(p^*\circ p'(x') )= C_{p^*}(p'(x'))\]for each $p'\in P'$ (since $p^*\circ p'$ and $p'$ both belong to $P_{new}$ for each $p'\in P'$).

        Meanwhile, by definition of $P'$, for each possible outcome of $(C_p\in \CC_{\le r})_{p\in P}$, we can pick $p'\in P'$ so that
        \[C_p(p\circ p'(x')) = C_p(p'(x'))\]for each $p'\in P'$. By design, writing $x'' := p'(x')$  \dc{so $x''=p''(x)$ for some $p''\in P''$}, we get that for each $q \in P \cup \{p^*\}$, that $C_q(q\circ x'') = C_q(x'')$ as desired \dc{Better to write $C_q(q\circ p''(x)) = C_q(p''(x))$}.
        
        Moreover, we have that $p'\circ p_{new}' \in P''$ by definitions.
    \end{proof}

\begin{rmk}
    The key `point' was that we could make $P_{new}\subset \PP$ be as large as we please. Thus, since there are only finitely many perturbations $p'$ to worry about, when we handle the colorings for $p\in P$, we can just ``look into the future'' and add elements to $P_{new}$ so that we still win after each possible outcome of $p'$. \thide{This is perhaps nicer to interpret from the neighborhood perspective, \zh{TODO: phrase this explanation better!}}

    Not only is the idea implicit in the recent work of Alweiss \cite{alweiss}, but it also appears (again implicitly) in an earlier paper of Alweiss (namely \cite{alweiss2}).
\end{rmk}

\zh{TODO: draw parralels to the use of being stable under dilations in the previous section, and being stable by perturbations in this section. basically this is just a way more general framework for the last proof.}

\subsection{Motivation}\label{motivation}
Let $\vec{u} = (u_1,\dots,u_{n-1})\in \Q_+^{n-1}$ be a vector. Also, consider some starting point $x\in \Q_+$.

Now consider some new $n$-family $\II = \{A_1,\dots,A_\ell,S\cup \{n\}\}$. We define the `$\II$-shift' operator, as
\[\sigma_\II:\Q_+^{n-1}\times \Q_+ \to \Q_+^{n-1}\times \Q_+;(\vec{u},x)\mapsto (\vec{u},x+\sum_{i=1}^\ell \frac{\prod_{a\in A_i}u_a}{\prod_{s\in S} u_s}).\]We note the identity\footnote{Here as an abuse of notation, we identify pairs $((u_1,\dots,u_{n-1}),x)\in \Q_+^{n-1}\times \Q_+$ with their `concatenation' $(u_1,\dots,u_{n-1},x)$.}\[\varphi_\II((\vec{u},x)) = \varphi_{\{f(\II)\}}(\sigma_\II((\vec{u},x)));\]while this might seem strange, the `point' of this definition is that we can now express notions of consistency in terms of a Ramsey problem which is in the flavor of the last subsection (where we treat $\sigma_\II$ as a `perturbation', and consider colorings of $\Q_+^{n-1}\times \Q_+$ that are induced by evaluating $\varphi_{\{\II\}}$).

Specifically, suppose we have fixed some finite coloring $C$ of $\Q_+$.

For $S \subset [n-1]$, we define 
\[C_S((\vec{u};x)) := C(x\prod_{s\in S}u_s). \]Given a $n$-family $\II = \{A_1,\dots,A_\ell, S\cup \{n\}\}$, we have that $(u_1,\dots,u_{n-1},x)$ is $\{\II\}$-consistent (wrt $C$) if:
\[C_S((\vec{u};x)) = C_S(\sigma_\II((\vec{u};x_\II))).\]

Thus, Propostion~\ref{stable ext} would certainly be implied by the following `delusion' (i.e., false claim). 
\begin{delusion}\label{dream statement}
    Let $\mathfrak{X}$ be a set of new $n$-families. Write $\Omega := \newp(\mathfrak{X})$, and suppose $|A|>|B|$ for each $(A,B)\in \Omega$.

    Then, for every $r<\infty$ and finite $Q\subset \Q_+$, there exists a finite set $Q_*\subset \Q_+$ such that:

    Given any $\vec{u}$, along with $r$-colorings $C_\II:\Q_+^{n-1}\times \Q_+ \to [r]$ for each $\II \in \mathfrak{X}$, then there exists $q_*\in Q_*$ and $x\in \Q_+$, so that
    \[C_\II( \sigma_\II(q \cdot (q_*\cdot \vec{u},x))) = C_\II(q\cdot (q_*\cdot \vec{u},x))\]for each $\II\in \mathfrak{X}$.
\end{delusion}
\begin{rmk}
    The above statement is blatantly false. Indeed, when $\sigma_\II$ has no fixed points (i.e., when $|\II|>1$), we can simply define a coloring $C_\II:\Q_+^{n-1} \times \Q_+ \to [2]$ so that $C_\II(\sigma_\II((\vec{u},x)))\neq C_\II((\vec{u},x))$ for all $\vec{u},x$ (this is similar to the discussion in Remark~\ref{trivial N}, where an opponent can ensure that $\{q,2q\}$ is never monochromatic).

    To get around this, we establish a slightly weaker result (cf. Proposition~\ref{culmination}), which still suffices to `get the job done'. 
\end{rmk}

Now, the statement of the above fake result is very reminiscant to the notion of being \textit{multi-task Ramsey}, which we introduced in the last subsection. So, to establish Proposition~\ref{stable ext}, we will define a closed set of perturbations $\PP$ over $X:=\Q_{n-1}^+\times \Q_n$, which contains the $\II$-shift $\sigma_\II$ for each $\II\in \mathfrak{X}$ (and furthermore, also constains the $q$-dilation $(\vec{u},x)\mapsto (q\cdot \vec{u},qx)$ for each $q\in \Q_+$). Then, for an appropriate choice of colorings $\CC$ (which corresponds to ``inheritting'' some nonsense in the appropriate way \zh{TODO: word correctly!}), we will establish the triple $(X,\PP,\CC)$ is compactly Ramsey, which will imply Proposition~\ref{stable ext}.

However, the exact details of defining $\PP$ and $\CC$ are a bunch of technical mumbo-jumbo, and the author does not wish to bombard the reader with those complexities at this time. Instead, in the next section, we will prove a handy lemma (Lemma~\ref{general term shift}). Then finally, in the last section, we will formally define our system $(X,\PP,\CC)$, and reveal how Lemma~\ref{general term shift} easily implies this system has the desired `compactly Ramsey'-property, and then quickly deduce Proposition~\ref{stable ext}.   

\section{Handling additive shifts}\label{shifting lemmas}
\subsection{First lemma}

To prove Proposition~\ref{stable ext}, we will bootstrap the following ``shifting lemma''.
\begin{lem}\label{single shift}
    Let $\mathfrak{X}$ be a collection of $n$-families $\II$ which are all new. 
    
    Furthermore, suppose that $|A| > |B|$ for every $(A,B)\in \newp(\mathfrak{X})$.

    Then, for every fixed choice of $r<\infty$ and finite $\Xi \subset \Q_{\ge 0}^{\newp(\mathfrak{X})}$, there exist finite sets $D \subset \N$ and $\Delta \subset \N^{\newp(\mathfrak{X})}$, such that:

    Given any $\vec{u}\in \Q_+^{n-1}$, $x\in \Q_+$, and $r$-coloring $C:\Q_+\to [r]$, we can find $d\in D$ and $\vec{\delta}\in \Delta$ and define
    \[x':= x+\sum_{(A,B)\in \newp(\mathfrak{X})}\delta_{(A,B)}\frac{\prod_{a\in A}u_a}{\prod_{b\in B}u_b} ,\]
    \[\vec{u}':= d \cdot \vec{u} = ( du_1,\dots, du_{n-1}),\]so that:
    \[C(x'+ \sum_{(A,B)\in\newp(\mathfrak{X})}\xi_{(A,B)}\frac{\prod_{a\in A} u_a'}{\prod_{b\in B}u_b'}) = C(x')\]for each $\vec{\xi}\in \Xi$. 
\end{lem}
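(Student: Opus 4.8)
The plan is to pull the coloring back to an integer lattice and then invoke the finitary form of the multidimensional polynomial van der Waerden theorem (equivalently, the polynomial Gallai theorem of Bergelson--Leibman). Write $\Omega := \newp(\mathfrak{X})$ and $e_{(A,B)} := |A|-|B|$, so $e_{(A,B)}\ge 1$ by hypothesis, and for a vector $\vec{u}\in\Q_+^{n-1}$ put $t_{(A,B)} := \prod_{a\in A}u_a\big/\prod_{b\in B}u_b\in\Q_+$. First I would clear denominators: fix $M\in\N$ with $M\xi_{(A,B)}\in\Z_{\ge 0}$ for all $\vec{\xi}\in\Xi$ and $(A,B)\in\Omega$, and agree to only use dilation factors of the form $d = Md_0$ with $d_0\in\N$. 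Since dilating $\vec{u}$ by $d$ turns $\prod_{a\in A}u_a\big/\prod_{b\in B}u_b$ into $d^{e_{(A,B)}}t_{(A,B)}$, the term $\xi_{(A,B)}\,d^{e_{(A,B)}}t_{(A,B)}$ equals $\big(\xi_{(A,B)}M^{e_{(A,B)}}\big)d_0^{e_{(A,B)}}t_{(A,B)}$ with $\xi_{(A,B)}M^{e_{(A,B)}} = (M\xi_{(A,B)})M^{e_{(A,B)}-1}\in\Z_{\ge 0}$; so after replacing $\Xi$ by the finite set $\{(\xi_{(A,B)}M^{e_{(A,B)}})_{(A,B)}:\vec{\xi}\in\Xi\}\subset\Z_{\ge 0}^\Omega$ I may assume $\Xi\subset\Z_{\ge 0}^\Omega$ with $d = d_0$ free in $\N$ (at the end $D$ is recovered as the $M$-dilate of the resulting set of $d_0$'s).

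Next, given any $\vec{u},x,C$, define the auxiliary $r$-coloring $c$ of the lattice $\Z_{\ge 0}^\Omega$ by $c(\vec{j}) := C\big(x+\sum_{(A,B)}j_{(A,B)}t_{(A,B)}\big)$; this is well-defined (all arguments lie in $\Q_+$) even if the $t_{(A,B)}$ are $\Q$-linearly dependent. Unwinding the definitions, the value $x' := x+\sum_{(A,B)}\delta_{(A,B)}t_{(A,B)}$ is the image of $\vec{\delta}$ and the value $x'+\sum_{(A,B)}\xi_{(A,B)}d^{e_{(A,B)}}t_{(A,B)}$ is the image of $\vec{\delta}+(\xi_{(A,B)}d^{e_{(A,B)}})_{(A,B)}$, so it suffices to find $\vec{\delta}\in\N^\Omega$ and $d\in\N$ for which the configuration $\{\vec{\delta}\}\cup\{\vec{\delta}+(\xi_{(A,B)}d^{e_{(A,B)}})_{(A,B)}:\vec{\xi}\in\Xi\}$ is monochromatic under $c$. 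For each fixed $\vec{\xi}$ the map $d\mapsto(\xi_{(A,B)}d^{e_{(A,B)}})_{(A,B)}$ is a polynomial $\Z\to\Z^\Omega$ with \emph{zero constant term} --- precisely because $e_{(A,B)}\ge 1$, i.e. $|A|>|B|$. Hence the finitary multidimensional polynomial van der Waerden theorem supplies an $N = N(r,\Omega,\Xi)$, independent of $\vec{u},x,C$, such that every $r$-coloring of $\{1,\dots,N\}^\Omega$ contains such a configuration with $\vec{\delta}\in\{1,\dots,N\}^\Omega$ and $d\in\{1,\dots,N\}$ (positivity being automatic since each polynomial takes non-negative values on positive inputs). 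Taking $\Delta := \{1,\dots,N\}^\Omega$ and $D := \{Md_0:1\le d_0\le N\}$ --- both finite and uniform in the adversary's data --- then completes the argument.

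The step I expect to be the main obstacle, conceptually, is recognizing that the object one needs is a \emph{polynomial} monochromatic star rather than an ordinary van der Waerden / Gallai configuration: dilating $\vec{u}$ by a single scalar $d$ scales the generator indexed by $(A,B)$ by the \emph{varying} power $d^{|A|-|B|}$, so distinct generators receive distinct powers of $d$ and the ordinary (multidimensional) theorem does not apply; one genuinely needs the Bergelson--Leibman polynomial version, whose applicability hinges exactly on the hypothesis $|A|>|B|$ that forces each exponent to be at least $1$. In the special case where every $|A|-|B|$ equals $1$ --- the case most relevant to re-proving Theorem~\ref{alw result} --- the configuration is an honest homothet of $\Xi\cup\{\vec{0}\}$ and the ordinary Gallai theorem suffices. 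The remaining points are routine: clearing denominators as above, arranging positivity of $d$ and of the coordinates of $\vec{\delta}$, and observing that $\Delta$ and $D$ may be chosen independently of $\vec{u},x,C$, which is automatic from the finitary statement since $c$ is merely \emph{some} $r$-coloring of a lattice.
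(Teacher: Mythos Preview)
Your proposal is correct and follows essentially the same route as the paper: pull the coloring back to $\Z_{\ge 0}^\Omega$ via $\vec{j}\mapsto C(x+\sum j_{(A,B)}t_{(A,B)})$, observe that dilation by $d$ corresponds to the polynomial shift $(\xi_{(A,B)}d^{|A|-|B|})_{(A,B)}$ with zero constant term (exactly because $|A|>|B|$), and apply the finitary multidimensional polynomial van der Waerden theorem of Bergelson--Leibman. The only cosmetic difference is that you clear denominators by hand (via your $M$), whereas the paper packages this step into a ``revised'' version of the Bergelson--Leibman theorem (their Theorem~\ref{mpvdw}) that already allows rational coefficients.
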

\hide{\begin{rmk}
    It is very important here that the sets $D,\Delta$ are finite, and depend only on the choice $r,\Xi,d_0$.
\end{rmk}
\begin{rmk}
    The author thinks it is fair to say that the most important idea of Alweiss's paper, is that some qualitative form of Lemma~\ref{single shift} holds in a slightly different setting (as mentioned before Lemma~\ref{psuedo disjoint union}, Alweiss worked a different collection $\mathfrak{X}$ of $n$-families, which leads to different assumptions concerning the pairs $(A,B)$). However, in \cite{alweiss}, the exact finitary nature of $D,\Delta$, and their precise dependence on various parameters is essentially glosssed over. And more importantly, in \cite{alweiss}, they did not realize the powerful fact that we can introduce this scaling factor $d_0$.
\end{rmk}}

To prove this lemma, we require a powerful generalization of the van der Waerden numbers. First we must define some terminology concerning polynomials.
\begin{defn}
    We say a polynomial $p \in \Q_+[X]$ is \textit{good} if it has zero constant-term (i.e., the formal variable $X$ is a divisor of $p$). 
\end{defn}
\begin{defn}
    Let $\vec{p} = (p_i)_{i\in I} \in (\Q_+[X])^I$ be a tuple of polynomials. Given $d\in \Q_+$, we define the evaluation of $\vec{p}$ at $d$ as 
    \[\vec{p}(d) := \sum_{i\in I} p_i(d)e_i,\]where the terms `$e_i$' are considered to form the canonical basis of the lattice $\Z^I$ (here, our convention that $\N$ does not contain zero becomes a minor nuisance, since these elements $e_i$ do not belong to $\N^I$ (nor $\Q_+^I$), despite us wishing to work there; we shall somewhat gloss over this and apologize for the abuse of notation). 
\end{defn}

We can now state a celebrated result of Bergelson and Leibman \cite{bergelson}, which was later proved combinatorially by Walters \cite{walters}. 

\begin{thm}[Multidimensional polynomial van der Waerden] {\cite[Theorem~B]{bergelson}}
    Let $P \subset (\Q_{\ge 0}[X])^I$ be a finite set of multi-dimensional polynomials (with $I$ being a finite set). Now assume that for each $\vec{p}: = (p_i)_{i\in I}\in P$, we have:
    \begin{itemize}
        \item $p_i(d)\in \N$ for all $d\in \N$;
        \item and $p_i$ is good;
    \end{itemize}for each $i\in I$.

    Then, for every $r<\infty$, there exists some finite $S\subset \N^I$, so that, for all $r$-colorings $C:S\to [r]$, we can find $x\in S$ and $d\in \N$ such that
    \[C(x+\vec{p}(d)) = C(x)\]for all $\vec{p}\in P$.
\end{thm}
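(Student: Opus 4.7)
The plan is to prove this theorem by a double induction, with an outer induction on the number of colors $r$ and an inner induction on the combinatorial complexity of the polynomial system $P$, following the combinatorial approach of Walters \cite{walters}. First, I would perform the standard reductions to integer-coefficient polynomials: since each $p_i$ sends $\N$ to $\N$ and has zero constant term, we can expand it in the binomial basis $\{\binom{X}{k}\}_{k \ge 1}$ with integer coefficients, and by restricting the step parameter $d$ to a suitable sublattice $m\N$ we may assume each $p_i$ lies in $\Z[X]$ while remaining good. The statement can also be reformulated by taking $S$ to be a single very large box $[N]^I$.

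Next, I would define the complexity of $P$ in the style of the PET (polynomial exhaustion technique) weight: the complexity vector records, for each degree $k$, the number of distinct ``leading parts'' of tuples in $P$ of degree $k$, with lexicographic ordering starting from the highest degree. The base case is when every $\vec{p} \in P$ is linear, i.e.\ $\vec{p}(d) = d \vec{c}_{\vec{p}}$ for some integer vector $\vec{c}_{\vec{p}} \in \Z^I$. In that case the conclusion is precisely the multidimensional van der Waerden theorem (Gallai's theorem) applied to the finite set of directions $\{\vec{c}_{\vec{p}} : \vec{p} \in P\}$, which is a standard consequence of the Hales--Jewett theorem.

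For the inductive step, I would fix a ``top'' tuple $\vec{p}^* \in P$ contributing to the highest nonzero entry of the complexity vector, and introduce the \emph{shifted system} in which each $\vec{p}(X) \in P$ is replaced by the polynomial $\vec{p}(X + y) - \vec{p}^*(y)$ for a parameter $y$; the top-degree leading contribution of $\vec{p}^*$ is then eliminated after a further differencing step, producing a system of strictly smaller complexity. A color-focusing argument — essentially the one underlying the classical proof of van der Waerden's theorem, but now iterated across the ``levels'' of polynomial complexity — arranges inductively-produced almost-monochromatic configurations along a long parameter sequence and, via pigeonhole on the $r$ colors, fans them into a common target point. That target point, together with one of the parameter shifts, yields the required pair $(x, d)$.

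The main obstacle is formulating the inductive hypothesis with the correct strength so that the focusing step actually closes. The naive statement does not suffice: one must induct on a stronger ``parametric'' version (Walters' polynomial Hales--Jewett variant), which simultaneously produces a monochromatic pattern and the auxiliary structural data needed to seed the focusing argument at the next level of complexity. Tracking the Ramsey-type bounds across these nested inductions, and verifying that the drop in PET complexity after each differencing step is genuine, is where the technical heart of the proof resides; I would treat this step as a black box derived from \cite{walters}, or alternately cite the ergodic proof of \cite{bergelson} if combinatorial self-containedness is not required.
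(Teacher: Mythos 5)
This statement is not proved in the paper at all: it is quoted verbatim as an external black box, namely Theorem~B of Bergelson--Leibman \cite{bergelson} (with Walters \cite{walters} cited for a combinatorial proof), and the paper only does work on top of it in the ``revised'' version Theorem~\ref{mpvdw}, via a change of variables. Your outline is a faithful description of the standard combinatorial route to this result: the reduction of integer-valued good polynomials to integer-coefficient good polynomials by substituting $X \mapsto mX'$, the PET complexity vector ordered lexicographically by degree, the base case via Gallai's theorem, and the inductive step via shifted systems $\vec{p}(X+y)-\vec{p}^*(y)$ together with color focusing. But note that what you have written is a plan rather than a proof: the two places you identify as ``the technical heart'' (verifying that the PET weight genuinely drops after differencing, and formulating the parametric inductive hypothesis strong enough for the focusing step to close) are exactly the content of \cite{walters}, and you explicitly defer them to that reference. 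Since the paper itself also treats the theorem as a citation, this is consistent with the paper's usage; just be aware that if you were required to be self-contained, the deferred steps are where essentially all of the difficulty lives, and the remainder of your sketch would not constitute independent progress toward them.
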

By what is essentially a change of variables, rewriting $X$ as $NX'$ for some $N\in \N$ with enough factors, one can easily drop the first condition about our set of polynomials. Then, by an additional rescaling, we can furthermore ask that our set $S$ lies within a rescaled lattice. We summarize this as follows.
\begin{thm}[Multidimensional polynomial van der Waerden (revised)]\label{mpvdw}
    Let $P \subset (\Q_{\ge 0}[X])^I$ be a finite set of multi-dimensional polynomials (with $I$ being a finite set). Now assume that for each $\vec{p} = (p_i)_{i\in I}\in P$, that we have $p_i$ is good for each $i\in I$.

    Then, for every $r,d_0<\infty$, there exists some finite\footnote{Here, $d_0\cdot \N\subset \N$ is the set of positive integers divisible by $d_0$.} $S\subset (d_0 \cdot \N)^I$, so that, for all $r$-colorings $C:S\to [r]$, we can find $x\in S$ and $d\in \N$ such that
    \[C(x+\vec{p}(d)) = C(x)\]for all $\vec{p}\in P$.
\thide{\begin{rmk}
    I am actually no longer using this $d_0$ assumption. It is fine to just assume $d_0 = 1$ always. (this will be rewritten in a future version)
\end{rmk}}
    \begin{proof}
        Suppose this was false, for some choice of $P,d_0$. By finiteness, there must exist some $d'\in \N$, such that for every $\vec{p}\in P$, there exists a corresponding $\vec{p}'\in (\Z_{\ge 0}[X])^I$, so that 
        \[\vec{p}(d' d) = d_0 \cdot \vec{p}(d)\] for all $d\in \N$. 
        
        By the previous theorem, fixing $r<\infty$ and $P' := \{\vec{p}':\vec{p}\in P\}$ (whose polynomials are always integer valued), there exists a finite $S'\subset \N^I$ where we can find a monochromatic pattern inside each $r$-coloring of $S'$. We then take $S = \{d\cdot x:x\in S'\}$.
    \end{proof}
\end{thm}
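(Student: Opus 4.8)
The plan is to deduce this from ``the previous theorem'' (the Bergelson--Leibman result, with Walters' combinatorial proof) by a single change of variables $X\mapsto d'X$ together with a rescaling of the ambient lattice by $d_0$. The revised version asks for two things beyond the original: (i) that we may drop the hypothesis that each $p_i$ is integer-valued on $\N$, and (ii) that the finite set $S$ lies inside $(d_0\cdot\N)^I$ rather than $\N^I$. I would arrange both with one substitution.

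First I would clear denominators. Write $p_i(X)=\sum_{j\ge1}c_{ij}X^j$ with $c_{ij}\in\Q_{\ge0}$, the sum starting at $j=1$ precisely because $p_i$ is good. Let $M\in\N$ be a common denominator of all the finitely many coefficients $c_{ij}$ occurring among the $\vec p\in P$, so $Mc_{ij}\in\Z_{\ge0}$, and set $d':=Md_0$. For each $\vec p=(p_i)_{i\in I}\in P$ define $\vec q=(q_i)_{i\in I}$ by $q_i(X):=p_i(d'X)/d_0$. The point of taking $d'=Md_0$ is that $q_i(X)=\sum_{j\ge1}(Mc_{ij})M^{j-1}d_0^{j-1}X^j$, and since $j\ge1$ every coefficient here is a nonnegative integer; hence $q_i\in\Z_{\ge0}[X]$ (in particular integer-valued on $\N$), it is still good, and by construction $\vec p(d'd)=d_0\cdot\vec q(d)$ in $\Z^I$ for all $d\in\N$.

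Next I would apply ``the previous theorem'' to the finite family $Q:=\{\vec q:\vec p\in P\}$ with the given $r$, obtaining a finite $S'\subset\N^I$ such that every $r$-coloring of $S'$ admits $y\in S'$ and $e\in\N$ with $C'(y+\vec q(e))=C'(y)$ for all $\vec q\in Q$ (the theorem also guarantees $y+\vec q(e)\in S'$). Finally set $S:=\{d_0 y:y\in S'\}\subset(d_0\cdot\N)^I$, which is finite. Given an $r$-coloring $C$ of $S$, pull it back to $C'(y):=C(d_0 y)$ on $S'$, apply the previous step to get $y,e$, and put $x:=d_0 y\in S$ and $d:=d'e\in\N$. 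Then $x+\vec p(d)=d_0 y+d_0\vec q(e)=d_0(y+\vec q(e))\in S$, and $C(x+\vec p(d))=C'(y+\vec q(e))=C'(y)=C(x)$, as wanted.

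This is essentially bookkeeping, so there is no ``hard part'' in the usual sense; the one delicate point is the choice of $d'$. One needs the transformed tuples to be \emph{simultaneously} nonnegative-integer-coefficiented (so that the previous theorem applies), still good (so the monochromatic pair is a genuine translate), and tied to the originals by the exact identity $\vec p(d'd)=d_0\cdot\vec q(d)$, which is exactly what lets a single substitution absorb both the denominator-clearing and the $d_0$-scaling of the lattice. Goodness, i.e.\ $j\ge1$, is used crucially, since it is what guarantees one spare factor of $d_0$ can always be pulled out of each coefficient $c_{ij}(d')^j$. (One could instead phrase the whole thing as a proof by contradiction, fixing $P$ and $d_0$ first, but that is not needed here.)
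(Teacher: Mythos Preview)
Your proof is correct and follows essentially the same approach as the paper: substitute $X\mapsto d'X$ (with $d'$ chosen to clear denominators and absorb a factor of $d_0$, using goodness) to reduce to the integer-coefficient case, apply the Bergelson--Leibman theorem, then rescale by $d_0$. In fact your writeup is more careful than the paper's (which is phrased as a needless contradiction and contains minor typos); your observation that the contradiction framing is unnecessary is exactly right.
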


Since we will need a more general version of Lemma~\ref{single shift}, we will just give a ``high-level'' proof for now. This skips over a cumbersome calculation, but otherwise covers all the main ideas.
\begin{proof}[(Sketch of) Proof of Lemma~\ref{single shift}] Suppose we have fixed a collection $\mathfrak{X}$ where $|A|>|B|$ for all $(A,B)\in \newp(\mathfrak{X})$.

    We set $\Omega:= \newp(\mathfrak{X})$.

    Now given $\vec{\xi} \in \Q_{\ge 0}^{\Omega}$, we define the multi-dimensional polynomial  \[\vec{p}_{\vec{\xi}}: X\mapsto (\xi_{(A,B)} X^{|A|-|B|})_{(A,B)\in \Omega} \in (\Q_+[X])^{\Omega};\]recalling the assumption $|A|>|B|$, we see that each coordinate in this tuple is good.
    
    By Theorem~\ref{mpvdw}, applied with $r=r$ and $P = \{p_{\vec{\xi}}: \vec{\xi}\in \Xi\}$, there exists some finite $\Tilde{S} \subset \N^{\Omega}$ so that, for every $r$-coloring $\Tilde{C}:\Tilde{S}\to [r]$, we can find $\Tilde{x}\in \Tilde{S}$ and $d\in \N$ where
    \[C(\Tilde{x}+\vec{p}(d)) = C(\Tilde{x})\]for each $\vec{p}\in P$. Now, take $D$ to be the set of $d\in \N$ such that $\{\Tilde{x}+\vec{p}(d):\vec{p}\in P\}\subset \Tilde{S}$ for some  choice of $\Tilde{x}\in \Tilde{S}$. Clearly, $D$ is finite (since $\N\subset \Z$ has characteristic zero).
    
    We now suppose we are given $\vec{u}\in \Q_+^{n-1}$, $x\in \Q_+$, along with an $r$-coloring $C:\Q_+\to [r]$. 
        
    For $(A,B)\in \Omega$, we write $\rho_{(A,B)} := \frac{\prod_{a\in A} u_a}{\prod_{b\in B}u_b}$, to denote the associated ratio.
        
    We write $\vec{w} := X\cdot \vec{u}= (u_1 X,\dots,u_{n-1}X)$ (here $X$ is treated as a formal variable). For each pair of subsets $(A,B)\in \newp(\mathfrak{X})$, we have that
    \[\frac{\prod_{a\in A}w_a}{\prod_{b\in B} w_b} = \rho_{(A,B)} X^{|A|-|B|}, \]with the coefficient $\rho_{(A,B)}\in \Q_+$.

    Now we consider the homomorphism $\phi:\N^{\Omega}\to \Q_+$ induced by sending $e_{(A,B)}\mapsto \rho_{(A,B)} $. Our coloring $C$ over $\Q_+$, restricted to $\phi(\Tilde{S})$, yields a pull-back coloring over the set $\Tilde{S}$:
    \[\Tilde{C}:\Tilde{S}\to [r]; \Tilde{x}\mapsto C(x+\phi(\Tilde{x})).\]
    By design, the maps $\phi,\Tilde{C},C$, all relate to one in another in a useful way. This is shown rigorously in Claim~\ref{homorph calculation} (which we reach in few subsections), but for now, we will not get into specifics.

    By construction of $\Tilde{S}$, there will be some choice of $\Tilde{x}\in \Tilde{S}$ and $d\in D$ where $\{\Tilde{x}\} \cup \{\Tilde{x}+\vec{p}(d):\vec{p}\in P\}$ is monochromatic under $\Tilde{C}$. We claim that by taking
    \[x' = x+ \sum_{(A,B)\in \Omega}\phi(e_{(A,B)})\Tilde{x}_{(A,B)} = x+  \sum_{(A,B)\in \Omega}\Tilde{x}_{(A,B)}\rho_{(A,B)}\]
    \[\vec{u} := d \cdot \vec{u},\]then we will have
    \[C(x' + \sum_{(A,B)\in \Omega} \xi_{(A,B)}\frac{\prod_{a\in A}u_a'}{\prod_{b\in B} u_b'}) = \Tilde{C}(\Tilde{x}+\vec{p}_{\vec{\xi}}(d)) = \Tilde{C}(\Tilde{x})= C(x')\]for all $\vec{\xi}\in \Xi$ (this can be confirmed by using Claim~\ref{homorph calculation}, or perhaps the motivated reader will find this enlightening to work out themselves).
         
    If one has faith that the above claim is true, then they will see the conclusion of Lemma~\ref{single shift} is true, as we can take $D=D$ and $\Delta = \Tilde{S} \subset \N^\Omega$.
\end{proof}

\hide{Before doing this, we briefly note that with slight modifications to the above proof, one may obtain a more general version of Lemma~\ref{single shift}. This has some applications, such as Theorem~\ref{rational main}, discussed in the conclusion. We will return to this at a later date.

\begin{lem}\label{single shift gen}
    Let $\mathfrak{X}$ be a collection of $n$-families $\II$ which are all new. 
    
    Furthermore, suppose that there exists a.

    Then, for every fixed choice of $r<\infty$ and finite $\Xi \subset \Q_{\ge 0}^{\newp(\mathfrak{X})}$, there exist finite sets $D \subset \N$ and $\Delta \subset \N^{\newp(\mathfrak{X})}$, such that:

    Given any $\vec{u}\in \Q_+^{n-1}$, $x\in \Q_+$, $r$-coloring $C:\Q_+\to [r]$, and $d_0\in \N$, we can find $q^*\in Q^*$ and $\vec{\delta}\in \Delta$ and define
    \[x':= x+d_0\sum_{(A,B)\in \newp(\mathfrak{X})}\delta_{(A,B)}\frac{\prod_{a\in A}u_a}{\prod_{b\in B}u_b} ,\]
    \[\vec{v}:= q^*\cdot \vec{u} = (q^* u_1,\dots, q^*u_{n-1}),\]so that:
    \[C(x'+ \sum_{(A,B)\in\newp(\mathfrak{X})}\xi_{(A,B)}\frac{\prod_{a\in A} v_a}{\prod_{b\in B}v_b}) = C(x')\]for each $\vec{x}\in \Xi$. 
\end{lem}}

\hide{\subsection{Stability of a process}

\begin{rmk}
    The purpose of this subsection is to give a high-level overview of where we will be going. This will help motivate what is done in later subsections. 
    
    However, the below overview is a bit abstract (and potentially not written as clearly as one would like). The reader should be fine skipping skipping to the next section and reading line-by-line (though they may be a bit more confused where they are headed).
\end{rmk}

To prove Proposition~\ref{stable ext}, we must analyze instances of an iterative process, defined below. We must first introduce some setup.

\subsubsection{Setup}

Let $\ell<\infty$ be some finite integer. We also consider some $n<\infty$ to be fixed.

Next, let $\mathfrak{X}$ be a collection of new $n$-families. Also set $\Omega =\newp(\mathfrak{X})$. Given $\vec{u}\in \Q_{+}^{n-1}$, we set $\vec{\rho}_{\vec{u}}$ to be $(\frac{\prod_{a\in A}u_a}{\prod_{b\in B} u_b})_{(A,B)\in \Omega}\in \Q_{+}^\Omega$.

Finally, given vectors $\vec{x},\vec{y}$ in some space $\Q_{\ge 0}^I$, we write `$\vec{x}\cdot \vec{y}$' to denote their dot product, $\sum_{i\in I} x_i y_i$.

\subsubsection{The process}\label{process}
With all this set-up, we consider the following process, which progresses up to `Day $\ell$'.

\begin{enumerate}
    \item[Day 0.0] At time $t=0$, we will have some starting point $x_0\in \Q_+$, and some starting vector $\vec{u}^{(0)}\in \Q_+^{\Omega}$. Also set $\vec{\rho}_{(0)} := \vec{\rho}_{\vec{u}_{(0)}}$
    \item[Day 0.1] We then choose some finite $\Xi_0\subset \Q_{\ge 0}^\Omega$, and some $r_0<\infty$. We write $\pi_0$ to denote the pair $(\Xi_0,r_0)$.
    \item[Day 0.2] An opponent picks an $r_0$-coloring $C_0:\Q_+\to [r_0]$, however they please.
    \item[Day 0.3] By applying Lemma~5.1, with the inputs $\Xi=\Xi_0, x=x_0,\vec{u}=\vec{u}_{(0)},C=C_0$, we receive some ``additive perturbation'' $\vec{\delta}_{(0)} \in \Delta_{\pi_0}$, and ``multiplicative perturbation\footnote{Here we write `$Q^*$' to denote an arbitrary (finite) set of rationals. In reality, we know that Lemma~\ref{single shift} only outputs dilations that are integers. But we choose to discuss the general case of rational dilations in this subsection, as the arguments remain the same.}'' $q^*_{0}\in Q^*_{\pi_0}$ (we include these subscripts to emphasize that these finite sets only depend on $\pi_0$).

    \item[Day 1.0] We update $x_1 := x_0 + \vec{\delta}_{(0)}\cdot \vec{\rho}_{(0)}, \vec{u}_{(1)} := q^*_0\cdot \vec{u}_{(0)}$. Also set $\vec{\rho}_{(1)} := \vec{\rho}_{\vec{u}_{(1)}}$. Since we applied Lemma~\ref{single shift}, we are garunteed that \[C_0(x_1+ \vec{\xi}\cdot \vec{\rho}_{(1)}) = C_0(x_1)\] for every $\vec{\xi}\in \Xi_0$;
    \item[Day 1.1] We again choose some finite $\Xi_1\subset \Q_{\ge 0}^\Omega$, and $r_1<\infty$. We set $\pi_1:= (\Xi_1,r_1)$.
    \item[Day 1.2] Our opponent picks another $r_1$-coloring $C_1:\Q_+\to [r_1]$.
    \item[Day 1.3]Applying Lemma 5.1, we receive new perturbations $\vec{\delta}_{(1)}\in \Delta_{\pi_1},q^*_1 \in Q^*_{\pi_1}$.
    \item[Day 2.0] Again update \[x_2:= x_1+\vec{\delta}_{(1)}\cdot \vec{\rho}_{(1)},\quad \vec{u}_{(2)}:= q^*_1 \cdot \vec{u}_{(1)},\quad \vec{\rho}_{(2)}:= \vec{\rho}_{\vec{u}_{(2)}}.\]We now are garunteed that 
    \[C_1(x_2+\vec{\xi}\cdot \vec{\rho}_{(2)})= C_1(x_2)\]for all $\vec{\xi}\in \Xi_1$. 
    \item[...] Eventually, we reach Day $\ell$. At this point, we halt, with some $x_{\ell},\vec{u}_{(\ell)},\vec{\rho}_{(\ell)}$.
\end{enumerate}
\begin{rmk}We wish to specifically highlight the fact that the sets $\Delta_{\pi_1},Q^*_{\pi_1}$ \textit{only} depend on $\pi_1$ (and not on the past or future of the process).
\end{rmk}

We would like to establish the following result.
\begin{clm}\label{stable neighborhood}
   Suppose that choices of pairs $\pi_{t}= (\Xi_{t},r_{t})$ has been fixed for every $t\in [\ell-1]$.

    Then, for every choice of finite $\Theta\subset \Q_{\ge 0}^\Omega$ and $r_0<\infty$, there exists some $\Xi_0= \Xi(\Theta,r_0,\pi_1,\dots,\pi_{\ell-1})$, such that:

    If we run the above process with $\pi_t := (\Xi_t,r_t)$ for $t\in \{0,1,\dots,\ell-1\}$ (with $x_0\in \Q_+,\vec{u}_{(0)}\in \Q_+^{n-1}$ being able to be picked arbitrarily/maliciously by an opponent), we can ensure that for all possible outcomes of $x_\ell,\vec{u}_{(\ell)},\rho_{(\ell)}$, we will have
    \[C_0(x_\ell+\vec{\theta}\cdot\vec{\rho}_{(\ell)}) = C_0(x_\ell)\]for each $\vec{\theta}\in \Theta$.
\end{clm}
\noindent The proof will appear in the next subsection. Before this, we will give a bit more discussion of some helpful concepts. 

Given $x\in \Q_+, \vec{u}\in \Q_+^{n-1}$ and $\Xi\subset \Q_{\ge 0}^{\Omega}$, one may find it useful to interpret the set
\[\{x+\vec{\xi}\cdot \vec{\rho}_{\vec{u}}: \vec{\xi}\in \Xi\}\]as being ``the $\Xi$-neighborhood around $(x,\vec{u})$''.
Then the statment: 
\[\textrm{``we have
$C(x+\vec{\xi}\cdot \vec{\rho}_{\vec{u}}) = C(x)$
for all $\vec{\xi}\in \Xi$,''}\]
\noindent now says (possibly after adding the all-zero vector to $\Xi$, which doesn't really matter): \[\textrm{``there exists some color class $\chi\subset \Q_+$ of $C$, so}\]\[ \textrm{that the $\Xi$-neighborhood around $(x,\vec{u})$ is inside $\chi$.''}\]

So now what Claim~\ref{stable neighborhood} really says is: there is some choice of finite $\Xi_0$, so that after Day~1.0, where the $\Xi$-neighborhood of $(x_1,\vec{u}_{(1)})$ is contained inside a color class $\chi$ of $C_0$, then none of the finitely many possible perturbations that arise from $\pi_1,\dots, \pi_{\ell-1}$ are significant enough, that they can cause us to reach a state $(x_\ell,\vec{u}_{(\ell)})$ whose $\Theta$-neighborhood is not contained inside $\chi$.

 \subsubsection{Analysis}

We first make the following simple observation.

Given vectors $\vec{x},\vec{y}\in \Q_{\ge 0}^\Omega$, we now define $\vec{x}\odot \vec{y} := (x_{(A,B)}y_{(A,B)})_{(A,B)\in \Omega}$ to be their coordinate-wise product.

\begin{lem}\label{basic identity}
    For each $q\in \Q_+$, there exists $\Tilde{q}\in \Q_+^{\Omega}$ such that
    \[\vec{\rho}_{q\cdot \vec{u}}= \Tilde{q} \odot \vec{\rho}_{\vec{u}}.\]
    \begin{proof}
        Explicitly, $\Tilde{q}= (q^{|A|-|B|})_{(A,B)\in \Omega}$. We leave verification of this to the reader, as it just follows from considering our definitions.
    \end{proof}
\end{lem}

We now address the following, which handles the case where $\Delta_{\pi_1},Q^*_{\pi_1},\dots,Q^*_{\pi_{\ell-1}}$ are all singletons.
\begin{lem}\label{perturbation calc}
    Consider $(\vec{\delta}_{(i)})_{i=1}^{\ell-1}\subset \Q_{\ge 0}^{\Omega} ,(q^*_1)_{i=1}^{\ell-1}\subset \Q_+$ and some $\vec{\theta}\in \Q_{\ge 0}^\Omega$.

    There exists some $\vec{\xi}\in \Q_{\ge 0}^\Omega$ so that for all $x_1\in \Q_+, \vec{u}_{(1)}\in \Q_+^{n-1}$, we have that, defining 
    \[x_t := x_{t-1}+\vec{\delta}_{t-1}\cdot \vec{\rho}_{\vec{u}_{(t-1)}},\]
    \[\vec{u}_{(t)} := q^*_{t-1}\cdot \vec{u}_{(t-1)},\]for $t=2,\dots,\ell$, then 
    \[\vec{x}_\ell + \vec{\theta}\cdot \vec{\rho}_{\vec{u}_{(\ell)}} =  \vec{x}_1 +\vec{\xi}\cdot \vec{\rho}_{\vec{u}_{(1)}}.\]
    \begin{proof}
        For $\ell =2$, one verify that $\vec{\xi} := \vec{\delta}_{(1)} + \Tilde{q^*_1} \cdot \vec{\theta}$ (recalling the definition of $\Tilde{\cdot}$ from Lemma~\ref{basic identity}). 

        Then you just induct.
    \end{proof} 
\end{lem}

Now, we can prove the previous claim.

\begin{proof}[Proof of Claim~\ref{stable neighborhood}] WLOG, let's assume $\Theta$ contains the all zero-vector.

Write $\Delta_t := \Delta_{\pi_t}, Q^*_t := Q^*_{\pi_t}$ for $t\in [\ell-1]$, which are all ``deterministic'', having fixed the $\pi_t$ for $t>0$. 

We consider starting with $\Xi_0$ being the empty set, and then add vectors one-by-one.

For each possible choice of $\vec{\theta}\in \Theta$, and $(\vec{\delta}_{(i)})_{i=1}^{\ell-1},(q^*_i)_{i=1}^{\ell}$ with $\vec{\delta}_{(i)}\in \Delta_i$ and $q^*_i\in Q^*_i$ for $i\in [\ell-1]$, we add a vector $\vec{\xi}\in \Q_{\ge 0}^\Omega$, which satisfies the conclusion of Lemma~\ref{perturbation calc}, to our set $\Xi_0$.

Clearly, since all the above sets were finite, the set $\Xi_0$ created will also be finite.

So now, if we run the process with $\pi_0 = (\Xi_0, r_0)$ and $\pi_1,\dots,\pi_{\ell-1}$ defined as they were, then on Day 1.0, we reach some $x_1\in \Q_+, \vec{u}_{(1)}\in \Q_+^{n-1}$ such that the $\Xi_0$-neighborhood around is contained in a single color class of $C_0$. So, noting that by construction, the $\Theta$-neighborhood around $(x_\ell,\vec{u}_{(\ell)})$ will be contained inside the $\Xi_0$-neighborhood around $(x_1,\vec{u}_{(1)})$ for any possible outcome of $x_\ell,\vec{u}_{(\ell)}$, we see that we are done.
\end{proof}
\begin{rmk}
    Note that the value of $r_0$ had no bearing in the above proof. Indeed, this is because we only care about what happens between Day 1.0 and Day $\ell$ (where we wish for the $\Xi_0$-neighborhood contained inside a color class to not end up being `too perturbed' in the remaining steps). In this sense, increasing the number of colors, $r_0$, is quite cheap. 
    
    Indeed, it does not effect anything related to the coloring $C_0$ that we care about at time $t=0$. The only ``cost'' is that $\Delta_{\pi_0},Q^*_{\pi_0}$ might become larger (yet still finite) sets. However, this is not any reason for concern, since if we also wanted do something at an earlier time (say, $t=-1$), the only impact would be is that the set $\Xi_{-1}$ which is chosen at that time is just a bigger finite set. 

    To carry out our bootstrapping, and prove Proposition~\ref{alw result}, we shall exploit this cheapness of increasing the number of colors (as was done by Alweiss \cite{alweiss}).
\end{rmk}

\hide{}

\subsection{Backtracking}

We need the following strengthening of Claim~\ref{stable neighborhood}.

We now need a corollary of Claim~\ref{stable neighborhood}.
\begin{lem}
    Suppose that choices of pairs $\pi_{t}= (\Xi_{t},r_{t})$ has been fixed for every $t\in [\ell-1]$. Additionally 

    Then, for every choice of finite $\Theta\subset \Q_{\ge 0}^\Omega$ and $r_0<\infty$, there exists some $\Xi_0= \Xi(\Theta,r_0,\pi_1,\dots,\pi_{\ell-1})$, such that:

    If we run the above process with $\pi_t := (\Xi_t,r_t)$ for $t\in \{0,1,\dots,\ell-1\}$ (with $x_0\in \Q_+,\vec{u}_{(0)}\in \Q_+^{n-1}$ being able to be picked arbitrarily/maliciously by an opponent), we can ensure that for all possible outcomes of $x_\ell,\vec{u}_{(\ell)},\rho_{(\ell)}$, we will have
    \[C_0(x_\ell+\vec{\theta}\cdot\vec{\rho}_{(\ell)}) = C_0(x_\ell)\]for each $\vec{\theta}\in \Theta$.
\end{lem}

\begin{proof}
    Let $|\mathfrak{X}| = \ell$, and enumerate its elements $\II_0,\dots,\II_{\ell-1}$ (in an arbitrary order). 

    Now, suppose we are given $r,d_0<\N$ and an $r$-coloring $C:\Q_+\to [r]$. 

    It is musch 

    We claim that there is a choice of finite $\Xi_0,\dots,\Xi_{\ell-1}\subset \Q_+^\Omega$ and $r_0,\dots,r_{\ell-1}<\infty$, where: given any $r$-coloring $C:\Q_+\to [r]$, we can define $r_t$-colorings $C_t:\Q_+\to [r_t]$ in such a way that for any start $(x_0,\vec{u}_{(0)})$, if we run the process from Subsubsection~\ref{process} (using the colorings 

    It is much simpler 
    
    Working backwards, we consider each time $t=\ell-1,\ell-2,\dots,0$, and pick a finite $\Xi_t\subset \Q_{\ge 0}^\Omega$.  

    Specifically, we use the following corollary of Claim~\ref{stable neighborhood}.
    \begin{lem}
        Fix $t \in \{0,\dots,\ell-1\},r<\infty$, a finite $Q\subset \Q_+$, and pairs $\pi_{t'} = (\Xi_{t'},r_{t'})$ for each $t'\in \{t+1,\dots,\ell-1\}$.

        There exists a finite $\Xi_t\subset \Q_+^\Omega$ and $r_t<\infty$, such that, given any $x_t\in \Q_+,\vec{u}\in \Q_+^{n-1}$ and $r$-coloring $C:\Q_+\to [r]$, we can define an $r_t$-coloring $C_t:\Q_+\to [r_t]$ such that, if  
    \end{lem}
\end{proof}

\hide{\begin{rmk}
    We note that $\Xi_t$ which are chosen do not actually depend on the coloring $C$ which is revealed.  
\end{rmk}}
}
\subsection{Stonger stability}

We introduce some handy notation. 
\begin{defn}
    Suppose we are working with some collection of new $n$-families, $\mathfrak{X}$, with $\Omega:= \newp(\mathfrak{X})$. Then, for $\vec{u}\in \Q_+^{n-1}$, we define the vector of ratios
    \[\vec{\rho}_{\vec{u}} := (\frac{\prod_{a\in A}u_a}{\prod_{b\in B} u_b})_{(A,B)\in \Omega}\in \Q_+^\Omega.\]
\end{defn}
\noindent The ``meaning'' behind the above definition, is that when we apply Lemma~\ref{single shift}, \[x'-x = \vec{\delta}\cdot \vec{\rho}_{\vec{u}}.\]

\subsubsection{Handling additive perturbations}
We now establish the following generalization of Lemma~\ref{single shift} (basically, the first lemma was the `$S = \emptyset$ case').

\begin{lem}\label{general term shift}
    Fix $S\subset [n-1]$.

    Let $\Omega$ be the set of pairs $(A,B)$ with $A,B\subset [n-1]$ with $|A|+|S|>|B|$.

    Then, for every choice of $r<\infty$ and finite set $\Xi \subset \Q_{\ge 0}^\Omega$, there exists finite sets $D\subset \N,\Delta\subset \N^\Omega$, such that:

    Given any $\vec{u}\in \Q_+^{n-1}$, $x\in \Q_+$, and $r$-coloring $C:\Q_+\to [r]$, we can find $d \in D,\vec{\delta}\in \Delta$ and define
    \[x' := \frac{x+ \vec{\delta}\cdot \vec{\rho}_{\vec{u}}}{d^{|S|}}\]
    \[\vec{u}' :=d\cdot \vec{u} = (du_1,\dots,du_{n-1}),\]
    \[\vec{v}:= (u_1',\dots,u_{n-1}',x'),\]
    so that for each $\vec{\xi}\in \Xi$, we have
    \[C(\prod_{s\in S} v_s \left( x'+ \vec{\xi}\cdot \rho_{\vec{u}'}\right)) = C(v_n\prod_{s\in S} v_s).\] 
    
\end{lem}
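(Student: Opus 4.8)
\textbf{Proof proposal for Lemma~\ref{general term shift}.}

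The plan is to reduce this statement to Lemma~\ref{single shift} (equivalently, to the $S=\emptyset$ case of the present lemma), exploiting the homogeneity of the shift/dilation operations and, crucially, the division by $d^{|S|}$. First I would observe that the condition $|A|+|S|>|B|$ appearing here plays exactly the role that $|A|>|B|$ played in Lemma~\ref{single shift}: it guarantees that the relevant polynomial $X\mapsto X^{|A|+|S|-|B|}$ is \emph{good}, i.e. has vanishing constant term. Indeed, writing $\vec{w}:=X\cdot\vec{u}$ (with $X$ a formal variable) and rescaling by $d$, we have for $(A,B)\in\Omega$ that $\frac{\prod_{a\in A}w_a}{\prod_{b\in B}w_b}=\rho_{(A,B)}X^{|A|-|B|}$; but what appears in the target equation is the product $\prod_{s\in S}v_s\cdot(\vec{\xi}\cdot\vec{\rho}_{\vec{u}'})$, and since $v_s=d\,u_s$ for $s\in S$, the factor $\prod_{s\in S}v_s=d^{|S|}\prod_{s\in S}u_s$ carries $|S|$ extra powers of $d$ when we think of $d$ as the dilation parameter. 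This is precisely why the exponent bookkeeping matches: the ``effective'' polynomial in the dilation variable is $X^{|A|-|B|+|S|}$, which is good by hypothesis.

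The key steps, in order, would be: (1) Fix $S$ and $\Omega$. Introduce the formal variable $X$, set $\vec{w}:=X\cdot\vec{u}$, and define for each $\vec{\xi}\in\Xi$ the multi-dimensional polynomial $\vec{p}_{\vec\xi}:X\mapsto\bigl(\xi_{(A,B)}X^{|A|-|B|+|S|}\bigr)_{(A,B)\in\Omega}$ --- each coordinate good because $|A|+|S|>|B|$. (2) Apply Theorem~\ref{mpvdw} with $r=r$, the polynomial family $P=\{\vec{p}_{\vec\xi}:\vec\xi\in\Xi\}$, and $d_0=1$, obtaining a finite $\tilde{S}\subset\N^\Omega$ such that every $r$-coloring of $\tilde S$ admits $\tilde x\in\tilde S$ and $d\in\N$ with $\tilde C(\tilde x+\vec p_{\vec\xi}(d))=\tilde C(\tilde x)$ for all $\vec\xi$; then let $D:=\{d\in\N:\{\tilde x+\vec p_{\vec\xi}(d):\vec\xi\in\Xi\}\subset\tilde S\text{ for some }\tilde x\in\tilde S\}$ (finite by characteristic zero) and $\Delta:=\tilde S$. (3) Given $\vec u,x,C$, set $\rho_{(A,B)}:=\frac{\prod_{a\in A}u_a}{\prod_{b\in B}u_b}$, let $\phi:\N^\Omega\to\Q_+$ be the homomorphism with $e_{(A,B)}\mapsto\rho_{(A,B)}$, and define the pullback coloring $\tilde C:\tilde S\to[r]$ by $\tilde C(\tilde y):=C\bigl(\prod_{s\in S}u_s\cdot(x+\phi(\tilde y))\bigr)$ --- note the extra factor $\prod_{s\in S}u_s$, absent in Lemma~\ref{single shift}, which is the one genuinely new wrinkle. (4) Extract $\tilde x\in\tilde S=\Delta$ and $d\in D$ monochromatic under $\tilde C$, set $\vec\delta:=\tilde x$, $x':=\frac{x+\vec\delta\cdot\vec\rho_{\vec u}}{d^{|S|}}$, $\vec u':=d\cdot\vec u$, $\vec v:=(u_1',\dots,u_{n-1}',x')$. (5) Verify the conclusion: compute $\prod_{s\in S}v_s\bigl(x'+\vec\xi\cdot\vec\rho_{\vec u'}\bigr)$. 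Here $\prod_{s\in S}v_s=d^{|S|}\prod_{s\in S}u_s$, which cancels the $d^{-|S|}$ in $x'$ against the $x+\vec\delta\cdot\vec\rho_{\vec u}$ part, while against the $\vec\xi\cdot\vec\rho_{\vec u'}$ part it contributes the needed $|S|$ extra powers of $d$ so that $d^{|S|}\cdot\rho_{(A,B)}d^{|A|-|B|}=\rho_{(A,B)}d^{|A|+|S|-|B|}=\phi(e_{(A,B)})\cdot[\vec p_{\vec\xi}(d)]_{(A,B)}/\xi_{(A,B)}$; summing, the whole expression equals $\prod_{s\in S}u_s\bigl(x+\phi(\tilde x+\vec p_{\vec\xi}(d))\bigr)$, whose $C$-color is $\tilde C(\tilde x+\vec p_{\vec\xi}(d))=\tilde C(\tilde x)=C(\prod_{s\in S}u_s(x+\phi(\tilde x)))=C(v_n\prod_{s\in S}v_s)$, using $v_n=x'=\frac{x+\vec\delta\cdot\vec\rho_{\vec u}}{d^{|S|}}$ and $\prod_{s\in S}v_s=d^{|S|}\prod_{s\in S}u_s$ again so that $v_n\prod_{s\in S}v_s=\prod_{s\in S}u_s(x+\phi(\tilde x))$.

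I expect the main obstacle to be step (5): making the exponent/cancellation bookkeeping airtight, in particular checking that the factor $\prod_{s\in S}v_s$ interacts correctly with \emph{both} the division by $d^{|S|}$ in $x'$ and the dilation $\vec u\mapsto d\cdot\vec u$ hidden inside $\vec\rho_{\vec u'}$, and confirming that the right-hand side $C(v_n\prod_{s\in S}v_s)$ is exactly the ``leading-term'' color $\tilde C(\tilde x)$. (This is the ``cumbersome calculation'' alluded to in the sketch of Lemma~\ref{single shift}, now with the $S$-twist.) A secondary, more bureaucratic point is our standing convention that $\N\not\ni 0$, which makes the basis vectors $e_{(A,B)}$ and the polynomial evaluations $\vec p_{\vec\xi}(d)$ not literally live in $\N^\Omega$ or $\Q_+^\Omega$; as in the sketch of Lemma~\ref{single shift} I would gloss over this abuse, or alternatively shift everything by an all-ones vector, absorbing the correction into $\Delta$ and $\Xi$. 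Everything else --- goodness of the polynomials, finiteness of $D$ and $\Delta$, the appeal to Theorem~\ref{mpvdw} --- is routine given the $S=\emptyset$ case.
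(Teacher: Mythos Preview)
Your proposal is correct and follows essentially the same approach as the paper: define the polynomials $\vec{p}_{\vec\xi}(X)=(\xi_{(A,B)}X^{|A|+|S|-|B|})_{(A,B)\in\Omega}$, invoke Theorem~\ref{mpvdw}, set $\Delta=\tilde S$ and $D$ as the finite set of feasible dilations, pull back $C$ along the homomorphism determined by the ratios $\rho_{(A,B)}$ (with the extra factor $\prod_{s\in S}u_s$), and verify the identity. The only cosmetic difference is that the paper absorbs $\prod_{s\in S}u_s$ into the homomorphism $\phi$ while you keep it as a prefactor in the pullback coloring; your step~(5) in fact carries out explicitly the ``cumbersome calculation'' that the paper defers as Claim~\ref{homorph calculation}.
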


\begin{proof}
        Essentially, we just apply multi-dimensional polynomial van der Warden in a very similar way as in Lemma~\ref{single shift}.

        Given $\vec{\xi}\in \Q_{\ge 0}^\Omega$, we associate the multi-dimensional polynomial:
        \[\vec{p}_{\vec{\xi}}: X\mapsto (\xi_{(A,B)} X^{|A|-|B|+|S|})_{(A,B)\in \Omega} \in (\Q_+[X])^{\Omega};\]each coordinate of said polynomial is good (has constant term zero).

        So, let $P = \{\vec{p}_{\vec{\xi}}: \vec{\xi}\in \Xi\}$. Then for any $r<\infty$, we can invoke Theorem~\ref{mpvdw} to find a finite $\Tilde{S} \subset \N^{\Omega}$, where for every $r$-coloring $\Tilde{C}$ of $\Tilde{S}$, there exists $\Tilde{x}\in \Tilde{S}$ and $d\in \N$ where the set 
        \begin{equation}\label{pattern}
            \{\Tilde{x}\} \cup \{\Tilde{x}+\vec{p}(d):\vec{p}\in P\}
        \end{equation}is monochromatic.

        So, fix a choice of $r<\infty$, and a corresponding (finite) set $\Tilde{S}\subset \N^\Omega$.

        We then consider any $r$-coloring $C:\Q_+\to [r]$, and choice of $x\in \Q_+, \vec{u}\in \Q_+^{n-1}$. Define the homomorphism
        \[\phi: \Z^\Omega \to \Q_+;e_{(A,B)} \mapsto \prod_{s\in S} u_{s}\frac{\prod_{a\in A}u_a}{\prod_{b\in B}u_b}.\] We note the following identity holds.
        \begin{clm}\label{homorph calculation}
            Consider any choice of $x\in \Q_+,d\in \N$.

            We get that
            \[ \prod_{s\in S} d\cdot u_s \left(\frac{x+ \vec{\delta}_{\vec{u}}}{d^{|S|}} + \vec{\xi} \cdot \vec{\rho}_{d\cdot \vec{u}}\right) = x\prod_{s\in S} u_s  + \phi(\vec{\delta}) + \phi(\vec{p}_{\vec{\xi}}(d))\]
            for every $\vec{\delta}\in \N^\Omega,\vec{\xi}\in \Q_{\ge 0}^\Omega$.
            
            \begin{proof}
                We defer the details to a later draft.\hide{We will break each side of the claimed equality into sums on some common index set $T$, and then show that the individual terms in these sums ``cancel out''.} 

               \zh{TODO: give details (though it is a basic identity)}
            \end{proof}
        \end{clm}

        To complete the proof, we shall take $\Delta := \Tilde{S}$ and $D$ to be the (clearly finite) set of $d\in \N$ where there exists $\Tilde{x}\in \Tilde{S}$ such that $\{\Tilde{x}+ \vec{p}(d):\vec{p}\in P\}\subset \Tilde{S}$.
        
        Indeed, consider any $r$-coloring $C:\Q_+\to [r]$. This yields a pull-back coloring
        \[\Tilde{C}: \Tilde{S}\to [r]; \Tilde{x}\mapsto C(x\prod_{s\in S} u_s+\phi(\Tilde{x})).\]
        By construction of $\Tilde{S}$, there will be some choice of $\Tilde{x}\in \Tilde{S},d\in \N$, so that the pattern described in \eqref{pattern} is monochromatic (under $\Tilde{C}$).

        Consequently, taking
        \[x' := \frac{x+\Tilde{x}\cdot \vec{\rho}_{\vec{u}}}{d^{|S|}},\]
        \[\vec{u}':= d\cdot \vec{u},\]then the above claim says that
        \begin{align*}
            C( \prod_{s\in S}u_s' \left(x' + \vec{\xi} \cdot \vec{\rho}_{\vec{u}'}\right)) &= C\left(x\prod_{s\in S} u_s + \phi(\Tilde{x}) + \phi(\vec{p}_{\vec{\xi}}(d))\right) \\
            &= \Tilde{C}(\Tilde{x}+ \vec{p}_{\vec{\xi}}(d))\\
            &= \Tilde{C}(\Tilde{x}) = C(x'\prod_{s\in S}u_s').\\
        \end{align*}
    This establishes what we desired.\end{proof}

\hide{As teased earlier, we can use the above lemma to get some consistency properties.
\begin{cor}\label{add consistency}
    Fix $S\subset [n-1]$.

    Let $\Omega$ be the set of pairs $(A,B)$ with $A,B\subset [n-1]$ with $|A|+|S|>|B|$.

    Then, for every choice of $r<\infty$ and finite set $\Xi \subset \Q_{\ge 0}^\Omega$, there exists finite sets $D\subset \N,\Delta\subset \N^\Omega$, such that:

    Given any $\vec{u}\in \Q_+^{n-1}$, $x\in \Q_+$, and $r$-coloring $C:\Q_+\to [r]$, we can find $d \in D,\vec{\delta}\in \Delta$ and define
    \[x' := \frac{x+ \vec{\delta}\cdot \vec{\rho}_{\vec{u}}}{d^{|S|}}\]
    \[\vec{u}' :=d\cdot \vec{u} = (du_1,\dots,du_{n-1}),\]
    \[\vec{v}:= (u_1',\dots,u_{n-1}',x'),\]
    so that for each $\vec{\xi}\in \Xi$, if we define $\vec{v}' := (u_1',\dots, u_{n-1}', x'+\vec{\xi}\cdot \vec{\rho}_{\vec{u}'})$ (i.e., for all $\vec{v}'$ in the $\Xi$-neighborhood of $\vec{v}$), then
    \[C(\varphi_\II(\vec{v}'))= C(v_n\prod_{s\in S} v_s) = C(\varphi_{\{f(\II)\}}(\vec{v}))\]
    for each family $\II\in \mathfrak{X}$ with $f(\II) = S\cup \{n\}$.
    \begin{proof}
        Given an $n$-family $\II\in \mathfrak{X}$ with $f(\II) = S\cup \{n\}$, define
        \[\vec{\xi}_\II:= \sum_{A\in \II\setminus\{f(\II)\}} e_{(A,S)}\subset \Q_{\ge 0}^\Omega\](to see why the inclusion holds, note that $\Q_{\ge 0}^\Omega$ is closed under addition, and each summand belongs to this set (since $|A|+|S|>|S|=|B|$ for any vector $e_{(A,B)} = e_{(A,S)}$ that appears in the considered sum)). The point is that, given any vector $\vec{v}' \in \Q_+^n$, we have that
        \[\varphi_\II(\vec{v}')= \prod_{s\in S} v_s\left(v_n' + \vec{\xi}_\II \cdot \vec{\rho}_{\vec{v}'|_{[n-1]}} \right)\] for any $\vec{v}'\in \Q_+^n$.
        
        In particular, if $\vec{v}' = (v_1,\dots,v_{n-1},v_n+\vec{\xi}^{(0)}\cdot (v_1,\dots,v_{n-1}))$ for some $\vec{\xi}^{(0)} \in \Q_{\ge 0}^\Omega$, then 
        \[\varphi_\II(\vec{v}') = \prod_{s\in S} v_s \left(v_n + (\vec{\xi}^{(0)}+\vec{\xi}_\II)\cdot \vec{\rho}_{\vec{v}|_{[n-1]}}\right).\]

        So, if we define $\Xi' := \{\vec{\xi}^{(0)}+\vec{\xi}_\II: \vec{\xi}^{(0)} \in \Xi,\, \II \in \mathfrak{X} \text{ with }f(\II) = S\cup 
 \{n\}\}$ (which is clearly a finite set), then we can invoke Lemma~\ref{general term shift} with $\Xi:= \Xi'$ to get the job done. \zh{TODO: spell this out a bit more}
    \end{proof}
\end{cor}}

\hide{\subsubsection{Handling dilative perturbations}\zh{TODO: write this part. also: decide whether or not to move this trick into the next section.} Next, we show that we can additionally handle some multiplicative errors.

To finish our argument, we will need to show that we can also become stable under these rescalings of the coordinates of $\vec{u}$ and $x$.

We start by recalling a lemma, which will be proved in the next section.
\begin{prp}\label{rescale blackbox}
    Let $q,q_* \in \Q_+$ and $\vec{\lambda},\vec{\xi}\in \Q_{\ge 0}^\Omega$. There exists $\vec{\theta} \in \Q_{\ge 0}^\Omega$ (which only depends on these $q,q_*,\vec{\lambda},\vec{\xi}$), such that for all $x\in \Q_+, \vec{u}\in \Q_+^{n-1}$, if we define
    \[x' := q(x+\vec{\lambda}\cdot \vec{\rho}_{\vec{u}}),\]
    \[\vec{u}' := q_*\cdot \vec{u},\]then we have
    \[x'+ \vec{\xi}\cdot \vec{\rho}_{\vec{u}'} = q (x+\vec{\theta}\cdot \vec{\rho}_{\vec{u}}).\]
\end{prp}
\noindent \zh{TODO: motivate}

This yields the following corollary.
\begin{cor}\label{rescale cor}
    Let $Q,Q_*\subset \Q_+$ and $\Xi,\Lambda \subset \Q_{\ge 0}^\Omega$ be finite sets.

    Then there exists finite $\Theta \subset \Q_{\ge 0}^\Omega$, such that:

    For any $x\in \Q_+,\vec{u}\in \Q_+^{n-1}$ and choice of $q\in Q, q_*\in Q_*, 
    \vec{\lambda}\in \Lambda, \vec{\xi}\in \Xi$, we have that, defining:
    \[x' := q(x+\vec{\lambda}\cdot \vec{\rho}_{\vec{u}}),\]
    \[\vec{u}':= q_*\cdot \vec{u},\]
    then there is $\vec{\xi}\in \Xi$ so that
    \[x'+\vec{\xi}\cdot\vec{\rho}_{\vec{u}'} = q(x+\vec{\theta}\cdot \vec{\rho}_{\vec{u}}).\]
    \begin{proof}
        As the sets $Q,Q_*,\Xi,\Lambda$ are all finite sets, there are only finitely many choices of $(q,q_*,\vec{\lambda},\vec{\xi})$ to consider. Invoking Proposition~\ref{rescale blackbox} for each choice, we construct a finite $\Theta$ with the desired properties (here it is important that $\vec{\theta}$ doesn't need to depend on $x,\vec{u}$).
    \end{proof}
\end{cor}

Finally, we can 

\begin{lem}\label{general term shift}
    Fix $S\subset [n-1]$.

    Let $\Omega$ be the set of pairs $(A,B)$ with $A,B\subset [n-1]$ with $|A|+|S|>|B|$.

    Then, for every choice of $r<\infty$ and finite sets $Q,Q_*\subset \Q_+$ and 
 $\Lambda,\Xi \subset \Q_{\ge 0}^\Omega$, there exists finite sets $D\subset \N,\Delta\subset \N^\Omega$, such that:

    Given any $\vec{u}\in \Q_+^{n-1}$, $x\in \Q_+$, and $r$-coloring $C:\Q_+\to [r]$, we can find $d \in D,\vec{\delta}\in \Delta$ and define
    \[x' := \frac{x+ \vec{\delta}\cdot \vec{\rho}_{\vec{u}}}{d^{|S|}}\]
    \[\vec{u}' :=d\cdot \vec{u} = (du_1,\dots,du_{n-1}),\]
    so that for each $q\in Q$, $q_*\in Q_*$, $\vec{\lambda}\in \Lambda$, if we redefine 
    \[x'' := q(x'+\vec{\lambda}\cdot \vec{\rho}_{\vec{u}'}),\]
    \[\vec{u}'' := q_* \cdot \vec{u}',\] and write
    \[\vec{v}:= (u_1'',\dots,u_{n-1}'',x'')\]
    then for each 
    $\vec{\xi}\in \Xi$, we have
    \[C(\prod_{s\in S} v_s \left( x''+ \vec{\xi}\cdot \rho_{\vec{u}''}\right)) = C(v_n\prod_{s\in S} v_s).\] 
    \begin{proof}
        Simply take $\Theta = \Theta(Q,Q_*,\Xi, \Lambda)$ according to Lemma~\ref{rescale cor}. 

        Now find finite $\Delta\subset \N^\Omega,D\subset \N$, by applying 
    \end{proof}
\end{lem}}

\section{The mumbo-jumbo}

\subsection{Some perturbations on the rationals}

\hide{In this subsection, we define some maps on $\Q_+^n$, and show they are closed under composition (so that we can apply Proposition~\ref{compactness implies multitask} in the next subsection). In a future update, we will try to motivate what we're considering before jumping into the weeds. The author offers his apologies to the reader.}

Let $X := \Q_+^{n-1} \times \Q_+$ (so we think of elements as pairs $(\vec{u},x)$). There will be two core types of perturbations, `shifts' (which will additively alter the right coordinate in pair) and `dilations' (which can multiplicatively change either coordinate in a pair).

\subsubsection{Shifts}
We shall start with the additive perturbations. Given $\vec{u}\in \Q_+^{n-1}$, and a pair $(A,B)$ of disjoint sets $A,B\subset [n-1]$, we define the ratio $\rho_{(A,B)}(\vec{u}) := \frac{\prod_{a\in A} u_a}{\prod_{b\in B} u_b}$. 

Consider $\vec{\lambda} \in \Q_{\ge 0}^\Omega$, whose indices belong to some set $\Omega$ of pairs $(A,B)$ satisfying the aforementioned assumptions (namely, $A,B\subset [n-1]$ are disjoint). We now define the $\vec{\lambda}$-shift:
\[\sigma_{\vec{\lambda}}: (\vec{u};x) \mapsto (\vec{u};x'),\]where
\[x' := x+ \sum_{(A,B)\in \Omega} \lambda_{(A,B)}\rho_{(A,B)}(\vec{u}).\]

Let $\Sigma_\Omega:= \{\sigma_{\vec{\lambda}}:\vec{\lambda}\in \Q_{\ge}^\Omega\}$ denote the sets of shifts supported on coordinates in $\Omega$.

It is very clear that $\Sigma_\Omega$ is closed under composition, as \[\sigma_{\vec{\lambda}}\circ \sigma_{\vec{\lambda}'} = \sigma_{(\lambda_{(A,B)}+\lambda_{(A,B)}')_{(A,B)\in \Omega}}\]for any two vectors $\vec{\lambda},\vec{\lambda}'$.

\subsubsection{Dilations} Here, we will again let $\Omega$ be some set of pairs $(A,B)$. 

Now, given $q_* = (q_1,q_2)\in \Q_+$, we define the $q_*$-rescale
\[R_{q_*}: (\vec{u};x) \mapsto (q_1 \cdot \vec{u};q_2 x).\]Write $\QQ_*:=\{ R_{q_*}: q_*\in \Q_+^2\}$ for the total set of dilation operators.

Clearly, $\QQ_*$ is also closed under composition. We now will be interested in understanding how these rescalings compose with shifts. 

To do this, we require a new definition. Given two vectors $\vec{x},\vec{y}\in \Q_+^\Omega$, we now define $\vec{x}\odot \vec{y} := (x_{(A,B)}y_{(A,B)})_{(A,B)\in \Omega}$ to be their coordinate-wise product.

\begin{lem}\label{basic identity}
    For each $q\in \Q_+$, there exists $\Tilde{q}\in \Q_+^{\Omega}$ such that
    \[\vec{\rho}_{q\cdot \vec{u}}= \Tilde{q} \odot \vec{\rho}_{\vec{u}}\]for all $\vec{u}\in \Q_+^{n-1}$.
    \begin{proof}
        Explicitly, $\Tilde{q}= (q^{|A|-|B|})_{(A,B)\in \Omega}$. We leave verification of this to the reader, as it just follows from considering our definitions.
    \end{proof}
\end{lem}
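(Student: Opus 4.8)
\textbf{Proof proposal for Lemma~\ref{basic identity}.} The claim is purely algebraic, asserting that dilating the vector $\vec{u}$ by a scalar $q$ rescales each coordinate $\vec{\rho}_{\vec{u}}$ by a power of $q$ depending only on the index. The plan is to simply unwind the definition of $\vec{\rho}_{\vec{u}}$ coordinate by coordinate. For a fixed index $(A,B)\in \Omega$, the corresponding coordinate of $\vec{\rho}_{q\cdot\vec{u}}$ is
\[
\frac{\prod_{a\in A}(q u_a)}{\prod_{b\in B}(q u_b)} = \frac{q^{|A|}\prod_{a\in A}u_a}{q^{|B|}\prod_{b\in B}u_b} = q^{|A|-|B|}\cdot \frac{\prod_{a\in A}u_a}{\prod_{b\in B}u_b},
\]
which is exactly $q^{|A|-|B|}$ times the $(A,B)$ coordinate of $\vec{\rho}_{\vec{u}}$. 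Hence defining $\Tilde{q} := (q^{|A|-|B|})_{(A,B)\in\Omega}\in\Q_+^\Omega$ (noting $\Tilde q$ has all positive entries since $q\in\Q_+$, even when $|A|-|B|$ is negative), we get $\vec{\rho}_{q\cdot\vec{u}} = \Tilde{q}\odot \vec{\rho}_{\vec{u}}$ by the definition of the coordinate-wise product $\odot$.

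The only things worth a sentence of care are bookkeeping points rather than genuine obstacles: that the expression makes sense as an element of $\Q_+^\Omega$ (it does, since $q^{|A|-|B|}\in\Q_+$ for any integer exponent), and that this holds for \emph{every} $\vec{u}\in\Q_+^{n-1}$ simultaneously, which is immediate because the computation above never used any property of $\vec{u}$. So there is no real main obstacle here; the lemma is a direct substitution. I would present it in one or two lines, exactly as the stated proof sketch in the excerpt does, perhaps making the per-coordinate computation explicit for the reader's convenience rather than leaving it entirely to verification.
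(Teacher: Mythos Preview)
Your proposal is correct and follows exactly the approach indicated in the paper: you define $\Tilde{q} = (q^{|A|-|B|})_{(A,B)\in\Omega}$ and carry out the coordinate-wise verification that the paper leaves to the reader. There is nothing to add or correct.
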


\begin{cor}\label{commutator}
    Given $\vec{\lambda}\in \Q_{\ge 0}^\Omega$ and $q_* \in \Q_+^2$, there exists $\vec{\lambda}'\in \Q_{\ge 0}^\Omega$ so that \[R_{q^*}\circ \sigma_{\vec{\lambda}} = \sigma_{\vec{\lambda}'}\circ R_{q^*}.\]
    \begin{proof} Write $q_* = (q_1,q_2)$, whence we have $R_{q_*} = R_{(q_1,1)} \circ R_{(1,q_2)}$. Now, first note that
    \[R_{(1,q_2)}\circ \sigma_{\vec{\lambda}} = \sigma_{q_2\cdot \vec{\lambda}}  \circ R_{(1,q_2)}.\]Next, recalling the definition of $\Tilde{\cdot}$ from Lemma~\ref{basic identity}, and taking $\vec{\lambda}':= \Tilde{(1/q_1)}\odot q_2\cdot \vec{\lambda}$, one gets 
    \[ R_{(q_1,1)} \circ \sigma_{q_2\cdot \vec{\lambda}} = \sigma_{\vec{\lambda}'}\circ R_{(q_1,1)},\]
    completing the proof.    \end{proof}
\end{cor}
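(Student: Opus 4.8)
The plan is a direct computation, using the factorization $R_{q_*} = R_{(q_1,1)}\circ R_{(1,q_2)}$ (writing $q_* = (q_1,q_2)$) to split the problem into two easy commutation facts, one for each ``pure'' dilation. First I would commute $\sigma_{\vec{\lambda}}$ past the $x$-only rescale $R_{(1,q_2)}$: since $R_{(1,q_2)}$ fixes the $\vec{u}$-coordinate, hence every ratio $\rho_{(A,B)}(\vec{u})$, and multiplies the $x$-coordinate by $q_2$, evaluating on an arbitrary $(\vec{u};x)$ gives at once $R_{(1,q_2)}\circ\sigma_{\vec{\lambda}} = \sigma_{q_2\cdot\vec{\lambda}}\circ R_{(1,q_2)}$, and $q_2\cdot\vec{\lambda}\in\Q_{\ge 0}^\Omega$ since $q_2>0$.

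The only step with any content is commuting a shift past the $\vec{u}$-only rescale $R_{(q_1,1)}$, and here Lemma~\ref{basic identity} is exactly the tool: $\vec{\rho}_{q_1\cdot\vec{u}} = \widetilde{q_1}\odot\vec{\rho}_{\vec{u}}$ where $\widetilde{q_1} = (q_1^{|A|-|B|})_{(A,B)\in\Omega}$. Expanding $R_{(q_1,1)}\circ\sigma_{\vec{\mu}}$ on a point $(\vec{u};x)$ gives $(q_1\cdot\vec{u};\, x + \vec{\mu}\cdot\vec{\rho}_{\vec{u}})$, while $\sigma_{\vec{\nu}}\circ R_{(q_1,1)}$ gives $(q_1\cdot\vec{u};\, x + \vec{\nu}\cdot\vec{\rho}_{q_1\cdot\vec{u}}) = (q_1\cdot\vec{u};\, x + \vec{\nu}\cdot(\widetilde{q_1}\odot\vec{\rho}_{\vec{u}}))$; these agree for every $(\vec{u};x)$ once we set $\nu_{(A,B)} := (1/q_1)^{|A|-|B|}\mu_{(A,B)}$ for all $(A,B)\in\Omega$, i.e. $\vec{\nu} := \widetilde{(1/q_1)}\odot\vec{\mu}$, whose entries are well-defined positive multiples of $\mu_{(A,B)}$ regardless of the sign of $|A|-|B|$. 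Feeding in $\vec{\mu} = q_2\cdot\vec{\lambda}$ from the first step and composing with $R_{(1,q_2)}$ on the right then yields $R_{q_*}\circ\sigma_{\vec{\lambda}} = \sigma_{\vec{\lambda}'}\circ R_{q_*}$ with $\vec{\lambda}' := \widetilde{(1/q_1)}\odot q_2\cdot\vec{\lambda}$.

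Finally I would verify the admissibility condition that genuinely needs checking: $\vec{\lambda}'$ lies in $\Q_{\ge 0}^\Omega$ and not merely $\Q^\Omega$. This holds because each coordinate equals $q_2\, q_1^{|B|-|A|}\lambda_{(A,B)}$, a product of the positive rationals $q_2$ and $q_1^{|B|-|A|}$ with the nonnegative rational $\lambda_{(A,B)}$. I do not expect a real obstacle anywhere: the argument is pure bookkeeping, and the only points that call for care are getting the direction of the exponent right (so that $\widetilde{(1/q_1)}$, not $\widetilde{q_1}$, appears in $\vec{\lambda}'$) and noticing that it is positivity of $q_1$ and $q_2$ that keeps the new shift vector admissible.
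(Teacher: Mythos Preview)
Your proposal is correct and follows essentially the same approach as the paper: the same factorization $R_{q_*}=R_{(q_1,1)}\circ R_{(1,q_2)}$, the same two commutation identities in the same order, and the same final formula $\vec{\lambda}'=\widetilde{(1/q_1)}\odot q_2\cdot\vec{\lambda}$. If anything, your write-up is a touch more careful, since you explicitly verify $\vec{\lambda}'\in\Q_{\ge 0}^\Omega$, which the paper leaves implicit.
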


\subsubsection{Our set of perturbations}

Throughout this section, we fix some set $\Omega$ of pairs $(A,B)$, where $A,B\subset [n-1]$ are disjoint.

We now define $\PP_\Omega := \{\sigma \circ R: \sigma\in \Sigma_\Omega,\, R \in \QQ_*\}$. This will be the set of perturbations which we focus on.

To start, we establish that this set is closed under composition.
\begin{lem}\label{closed}
    Given any $\sigma^{(1)},\sigma^{(2)} \in \Sigma_\Omega, R^{(1)},R^{(2)}\in \QQ_*$, we can find a shift $\sigma\in\Sigma_\Omega$ and rescale $R\in \QQ_*$ so that
    \[\sigma^{(1)} \circ R^{(1)} \circ \sigma^{(2)} \circ R^{(2)} = \sigma \circ R.\]
    \begin{proof}
        Since $\Sigma_\Omega,\QQ_*$ are each closed under composition, it suffices to find $\sigma'\in \Sigma_\Omega, R' \in \QQ_*$ so that $R^{(1)}\circ \sigma^{(2)} = \sigma' \circ R'$. To do this, we simply appeal to Corollary~\ref{commutator}.
    \end{proof}
\end{lem}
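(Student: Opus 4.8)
The plan is to reduce everything to a single ``commutation'' move, since the two constituent families are already individually closed: $\Sigma_\Omega$ is closed under composition (with $\sigma_{\vec{\lambda}}\circ\sigma_{\vec{\lambda}'}=\sigma_{\vec{\lambda}+\vec{\lambda}'}$), and $\QQ_*$ is closed under composition (composing two rescalings multiplies their parameters coordinatewise). Hence the only obstruction to rewriting a word in shifts and dilations in the normal form $\sigma\circ R$ is the presence of a dilation sitting to the \emph{left} of a shift, and Corollary~\ref{commutator} tells us exactly how to repair this: a dilation can be pushed to the right through a shift at the cost of altering the shift parameter.

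Concretely, I would argue as follows. Writing $R^{(1)}=R_{q_*}$ and $\sigma^{(2)}=\sigma_{\vec{\lambda}}$, Corollary~\ref{commutator} furnishes $\vec{\lambda}'\in\Q_{\ge 0}^\Omega$ with $R^{(1)}\circ\sigma^{(2)}=\sigma_{\vec{\lambda}'}\circ R^{(1)}$; set $\sigma':=\sigma_{\vec{\lambda}'}\in\Sigma_\Omega$. Substituting this into the four-fold composition and then reassociating,
\[
\sigma^{(1)}\circ R^{(1)}\circ\sigma^{(2)}\circ R^{(2)}
=\sigma^{(1)}\circ\sigma'\circ R^{(1)}\circ R^{(2)}
=\bigl(\sigma^{(1)}\circ\sigma'\bigr)\circ\bigl(R^{(1)}\circ R^{(2)}\bigr).
\]
Now take $\sigma:=\sigma^{(1)}\circ\sigma'$, which lies in $\Sigma_\Omega$ by closure of that family, and $R:=R^{(1)}\circ R^{(2)}$, which lies in $\QQ_*$ by closure of that family. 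This is the asserted factorization.

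There is essentially no genuine obstacle here: all of the real content was already spent in Lemma~\ref{basic identity} and the resulting Corollary~\ref{commutator}, after which this lemma is pure bookkeeping. The only point worth flagging is that the same move iterates: an easy induction shows that \emph{any} finite composition of shifts and dilations can be rewritten in the form $\sigma\circ R$, which is the precise sense in which $\PP_\Omega$ must be closed in order to serve as a legitimate set of ``perturbations'' when we apply Proposition~\ref{compactness implies multitask} in the next subsection.
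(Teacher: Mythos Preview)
Your proof is correct and follows essentially the same approach as the paper: both reduce the question to commuting $R^{(1)}$ past $\sigma^{(2)}$ via Corollary~\ref{commutator}, and then invoke closure of $\Sigma_\Omega$ and $\QQ_*$ under composition. You simply spell out the substitution and reassociation that the paper leaves implicit.
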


\subsection{Connections to consistency}

\zh{TODO: motivate why these things arise ETC}

As in the last subsection, let $X = \Q_+^{n-1}\times \Q_+$. We now introduce several collections of colorings on $X$.
\begin{defn}
    Given any $S\subset [n-1]$, we we write $\CC_S$ to denote the set of finite colorings $C$ of $X$, which are of the form
    \[C((\vec{u};x)) = C_0(x\prod_{s\in S} u_s)\]for some coloring $C_0$ of the positive rationals.
\end{defn}

Let us state an equivalent version of Lemma~\ref{general term shift}, in terms of our new definitions.
\begin{lem}\label{general shift restated}
    Consider any $S\subset [n-1]$.

    Let $\Omega$ be the set of pairs $(A,B)$ with $A,B\subset [n-1]$ with $|A|+|S|>|B|$.

    Then, for every choice of $r<\infty$ and finite set $\Xi \subset \Q_{\ge 0}^\Omega$, there exists a finite set $P'\subset \PP_\Omega$, such that:

    Given any $x\in X:= \Q_+^{n-1}\times \Q_+$, and any $r$-coloring $C:X\to [r] \in \CC_S$, we can find $p'\in P'$ and define
    \[x' := p'(x)\]
    so that for each $\vec{\xi}\in \Xi$, we have
    \[C(\sigma_{\vec{\xi}}(x')) = C(x').\] 
\end{lem}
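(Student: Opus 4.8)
The plan is to observe that Lemma~\ref{general shift restated} is nothing more than a reformulation of Lemma~\ref{general term shift} with the bookkeeping absorbed into the perturbation notation of the previous subsection, so the entire task is to match up the two statements. First I would unpack the definitions: an element $x \in X$ is a pair $(\vec u; x_0)$ with $\vec u \in \Q_+^{n-1}$ and $x_0 \in \Q_+$, and a coloring $C \in \CC_S$ has the form $C((\vec u; x_0)) = C_0(x_0 \prod_{s \in S} u_s)$ for some $C_0 : \Q_+ \to [r]$. Feed this $C_0$, together with the given $r$ and $\Xi$, into Lemma~\ref{general term shift}; this produces finite sets $D \subset \N$ and $\Delta \subset \N^\Omega$. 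From these I would build the candidate finite set $P' \subset \PP_\Omega$ as
\[
P' := \{\, \sigma_{\vec\delta} \circ R_{(d,\,1/d^{|S|})} \;:\; d \in D,\ \vec\delta \in \Delta \,\},
\]
which is finite and, since each $\sigma_{\vec\delta} \in \Sigma_\Omega$ (using that $\Delta \subset \N^\Omega \subset \Q_{\ge 0}^\Omega$) and each $R_{(d,1/d^{|S|})} \in \QQ_*$, genuinely lies in $\PP_\Omega$.

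Next I would check that this $P'$ works. Given $x = (\vec u; x_0) \in X$ and $C \in \CC_S$ with associated $C_0$, apply Lemma~\ref{general term shift} to $\vec u$, $x_0$, $C_0$: it returns $d \in D$, $\vec\delta \in \Delta$, and defines $x'_0 := (x_0 + \vec\delta \cdot \vec\rho_{\vec u})/d^{|S|}$ and $\vec u' := d \cdot \vec u$. I claim $p' := \sigma_{\vec\delta} \circ R_{(d,1/d^{|S|})} \in P'$ is the desired perturbation: indeed $R_{(d,1/d^{|S|})}((\vec u; x_0)) = (d\cdot \vec u;\, x_0/d^{|S|})$, and then applying $\sigma_{\vec\delta}$ adds $\vec\delta \cdot \vec\rho_{d\cdot \vec u}$ to the second coordinate — here one must verify the small computation that $(x_0/d^{|S|}) + \vec\delta \cdot \vec\rho_{d\cdot\vec u} = (x_0 + \vec\delta\cdot\vec\rho_{\vec u})/d^{|S|} = x'_0$, which follows since $\vec\rho_{d\cdot\vec u} = \tilde d \odot \vec\rho_{\vec u}$ by Lemma~\ref{basic identity} with $\tilde d_{(A,B)} = d^{|A|-|B|}$, combined with the fact that each coordinate of $\vec\delta$ is an integer and the denominators line up. Hence $p'(x) = (\vec u'; x'_0) =: x'$, matching the $\vec v|_{[n-1]} = \vec u'$, $v_n = x'_0$ of Lemma~\ref{general term shift}.

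Finally, for the monochromaticity: fix $\vec\xi \in \Xi$. Then $\sigma_{\vec\xi}(x') = (\vec u';\, x'_0 + \vec\xi\cdot\vec\rho_{\vec u'})$, so since $C \in \CC_S$,
\[
C(\sigma_{\vec\xi}(x')) = C_0\!\left(\big(x'_0 + \vec\xi\cdot\vec\rho_{\vec u'}\big)\prod_{s\in S} u'_s\right) = C_0\!\left(\prod_{s\in S} v_s\big(x'_0 + \vec\xi\cdot\vec\rho_{\vec u'}\big)\right),
\]
which by the conclusion of Lemma~\ref{general term shift} equals $C_0(v_n \prod_{s\in S} v_s) = C_0(x'_0 \prod_{s\in S} u'_s) = C(x')$. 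This is exactly $C(\sigma_{\vec\xi}(x')) = C(x')$. The argument expected to require the most care is not conceptual but notational: reconciling the division-by-$d^{|S|}$ convention in Lemma~\ref{general term shift} with the $R_{q_*}$ dilation notation (and the fact that $R_{(d,1/d^{|S|})}$ involves a non-integer second component, which is allowed since $\QQ_*$ ranges over $\Q_+^2$), plus making sure $\Delta \subset \N^\Omega$ really does embed into the index set $\Q_{\ge0}^\Omega$ used to define $\Sigma_\Omega$ — i.e. checking $\Omega$ here (pairs with $|A|+|S|>|B|$) is the same $\Omega$ throughout. There is genuinely no new mathematical content beyond Lemma~\ref{general term shift}; this lemma is a packaging step.
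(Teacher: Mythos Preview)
Your overall approach is exactly right --- the paper itself presents this lemma as nothing more than a restatement of Lemma~\ref{general term shift} in the perturbation language, and the work is purely translational. However, your definition of $p'$ has the composition in the wrong order, and the ``small computation'' you wave at is actually false.

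Concretely: with $p' := \sigma_{\vec\delta}\circ R_{(d,\,1/d^{|S|})}$ you get
\[
p'((\vec u;x_0)) \;=\; \Bigl(d\vec u;\ \frac{x_0}{d^{|S|}} + \vec\delta\cdot\vec\rho_{d\vec u}\Bigr) \;=\; \Bigl(d\vec u;\ \frac{x_0}{d^{|S|}} + \sum_{(A,B)\in\Omega}\delta_{(A,B)}\,d^{\,|A|-|B|}\,\rho_{(A,B)}(\vec u)\Bigr),
\]
whereas the $x'_0$ produced by Lemma~\ref{general term shift} is
\[
\frac{x_0 + \vec\delta\cdot\vec\rho_{\vec u}}{d^{|S|}} \;=\; \frac{x_0}{d^{|S|}} + \sum_{(A,B)\in\Omega}\delta_{(A,B)}\,d^{-|S|}\,\rho_{(A,B)}(\vec u).
\]
These agree only when $|A|-|B| = -|S|$ on the support of $\vec\delta$, which is exactly what the hypothesis $|A|+|S|>|B|$ forbids. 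So ``the denominators line up'' is false, and your $p'(x)$ is not the point to which the conclusion of Lemma~\ref{general term shift} applies.

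The repair is immediate: take $p' := R_{(d,\,1/d^{|S|})}\circ \sigma_{\vec\delta}$ instead (shift first, then dilate). Then $\sigma_{\vec\delta}((\vec u;x_0)) = (\vec u;\, x_0+\vec\delta\cdot\vec\rho_{\vec u})$, and applying $R_{(d,\,1/d^{|S|})}$ yields precisely $(\vec u';x'_0)$. This composite still lies in $\PP_\Omega$, since Corollary~\ref{commutator} rewrites it as $\sigma_{\vec\delta'}\circ R_{(d,\,1/d^{|S|})}$ for a suitable $\vec\delta'\in\Q_{\ge 0}^\Omega$; with that one change the rest of your argument goes through verbatim. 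One smaller point of presentation: in Lemma~\ref{general term shift} the sets $D,\Delta$ depend only on $r$ and $\Xi$, not on the coloring, so $P'$ must be (and in your construction, is) fixed before $C$ is revealed --- your phrasing ``feed this $C_0$\dots into Lemma~\ref{general term shift}; this produces finite sets $D,\Delta$'' obscures the quantifier order that the statement demands.
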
\noindent This will be enough to establish Proposition~\ref{stable ext}. Indeed, let us state a `corrected' form of Delusion~\ref{dream statement}:
\begin{prp}\label{legit final goal}
    Let $\mathfrak{X}$ be a set of new $n$-families. Write $\Omega := \newp(\mathfrak{X})$, and suppose $|A|>|B|$ for each $(A,B)\in \Omega$.

    Then, for every $r<\infty$ and finite $P\subset \PP_\Omega$, there exists a finite set $P'\subset \PP_\Omega$ such that:

    Given any $x\in X$, along with $r$-colorings $C_\II:X \to [r] \in \bigcup_{S\subset [n-1]} \CC_S$ for each $\II \in \mathfrak{X}$, and choices\footnote{Here we can allow $\sigma_\II$ to be an arbitrary element of $\Sigma_\Omega$. We apologize for the unfortunate clash of notation with Subsection~\ref{motivation}.} $\sigma_\II\in \Sigma_\Omega $ for each $\II\in \mathfrak{X}$, then there exists $p' \in P'$, so that writing
    \[x' := p'(x),\]
    we have
    \[C_\II( \sigma_\II\circ p(x)) = C_\II(p(x))\]for each $\II\in \mathfrak{X},p\in P$.
\end{prp}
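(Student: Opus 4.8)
The plan is to prove Proposition~\ref{legit final goal} by induction on the (finite) collection $\mathfrak{X}$, peeling off one $n$-family at a time, in exactly the style of the ``compactly Ramsey $\implies$ multi-task Ramsey'' argument (Proposition~\ref{compactness implies multitask}) but now with the extra shift operators $\sigma_\II$ built in. First I would record the base case $\mathfrak{X}=\emptyset$ (take $P'=P$) and then set up the inductive step: assume the statement holds for some $\mathfrak{X}$, and adjoin a new family $\II^*$. Fix $r$ and $P\subset\PP_\Omega$. The new coloring $C_{\II^*}$ lies in $\CC_S$ for $S:=f(\II^*)\setminus\{n\}$, so I would first apply the inductive hypothesis to get a finite $P_0'$ handling $\mathfrak{X}$ for the enlarged perturbation set $P_{\mathrm{new}}:=P\cup\{\sigma_{\II^*}\circ p:p\in P\}$ — this is the ``look into the future'' move, exactly as in the proof of Proposition~\ref{compactness implies multitask}. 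The key point, just as there, is that $\Sigma_\Omega$ is a subset of $\PP_\Omega$ (shifts are $\sigma\circ R_{(1,1)}$) and $\PP_\Omega$ is closed under composition by Lemma~\ref{closed}, so $P_{\mathrm{new}}$ is a legitimate finite subset of $\PP_\Omega$.

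Next I would handle $C_{\II^*}$ itself. Here is where Lemma~\ref{general shift restated} enters: writing $\II^*=\{A_1,\dots,A_\ell,\,S\cup\{n\}\}$, the ``tail'' $\sum_{i}\prod_{a\in A_i}x_a/\prod_{s\in S}x_s$ is a shift $\sigma_{\vec{\xi}_{\II^*}}$ with $\vec{\xi}_{\II^*}=\sum_i e_{(A_i,S)}\in\Q_{\ge0}^\Omega$ — and the hypothesis $|A|>|B|$ for $(A,B)\in\Omega=\newp(\mathfrak{X}\cup\{\II^*\})$ guarantees $|A_i|+|S|>|S|$, so these coordinates genuinely lie in the index set of Lemma~\ref{general shift restated} applied with this $S$. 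But I actually need more than just $\sigma_{\vec{\xi}_{\II^*}}$: I need $C_{\II^*}(\sigma_{\II^*}\circ p(x))=C_{\II^*}(p(x))$ for \emph{every} $p\in P$, where each $p=\sigma_{\vec\mu}\circ R_{q_*}$. Using Corollary~\ref{commutator} to push the rescale past the shift, $\sigma_{\II^*}\circ p$ and $p$ both have the form $\sigma_{(\cdot)}\circ R_{q_*}$ with a common $R_{q_*}$, and since $\CC_S$-colorings compose nicely with $R_{q_*}$ (a rescale of a $\CC_S$-coloring is again one — worth checking, but routine), the desired equalities all reduce to $C_{\II^*}$ being constant on a finite $\Xi$-neighborhood of the point $R_{q_*}(x)$, where $\Xi$ is the finite set of all the shift-vectors that arise as $q_*$ and $\vec\mu$ range over $P$ (finitely many). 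So I apply Lemma~\ref{general shift restated} with this $\Xi$ and this $r$ to obtain a finite $P_1'\subset\PP_\Omega$ such that some $p_1'\in P_1'$ makes $C_{\II^*}$ constant on the whole $\Xi$-neighborhood of $p_1'(x)$.

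Finally I would assemble: take $P':=\{p_0'\circ p_1':p_0'\in P_0',\,p_1'\in P_1'\}$, finite and contained in $\PP_\Omega$ by Lemma~\ref{closed}. Given $x$ and the colorings, first pick $p_1'$ from the $C_{\II^*}$ step applied to $x$, set $x_1:=p_1'(x)$; then pick $p_0'$ from the inductive hypothesis applied to $x_1$ with the colorings $(C_\II)_{\II\in\mathfrak{X}}$ and the enlarged set $P_{\mathrm{new}}$, set $x':=p_0'(x_1)=p'(x)$. For $\II\in\mathfrak{X}$ and $p\in P$ the equality $C_\II(\sigma_\II\circ p(x'))=C_\II(p(x'))$ holds by choice of $p_0'$ (both $\sigma_\II\circ p$ and $p$ are in $P_{\mathrm{new}}$); for $\II=\II^*$ and $p\in P$, since $p_0'$ is itself a shift-after-rescale, $p_0'(x_1)$ still lies in (a translate inside) the $\Xi$-neighborhood on which $C_{\II^*}$ was made constant — this is the one slightly delicate bookkeeping point, and I would absorb it by choosing $\Xi$ in the previous paragraph large enough to also swallow the finitely many shift-vectors coming from $P_0'$, i.e. apply the inductive hypothesis \emph{before} invoking Lemma~\ref{general shift restated}, so that $P_0'$ (hence the extra shift-vectors) is already fixed when $\Xi$ is chosen. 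The main obstacle is precisely this ordering/commutation juggling — making sure the $\Xi$-neighborhood chosen for $C_{\II^*}$ is wide enough to survive all later perturbations $p_0'\in P_0'$, while $P_0'$ in turn was chosen to survive the $\sigma_{\II^*}$-enlargement; but this is exactly the backtracking trick of Proposition~\ref{compactness implies multitask}, and the finiteness at every stage makes it go through.
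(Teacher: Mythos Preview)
Your approach is essentially the same as the paper's: both peel off one family $\II^*$ at a time via backtracking, using Lemma~\ref{general shift restated} to handle the new family and the inductive hypothesis for the rest. The paper packages the argument into an abstract ``compactly-stabilizable'' framework (Proposition~\ref{sufficiency for compact stab} together with Proposition~\ref{comp stab to multistab}), while you do the induction directly; but the underlying moves are identical.

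Two points to tighten. First, your enlargement $P_{\mathrm{new}}=P\cup\{\sigma_{\II^*}\circ p:p\in P\}$ is in the wrong place: for $\II\in\mathfrak{X}$ the inductive hypothesis with plain $P$ already suffices (the goal is $C_\II(\sigma_\II\circ p(x'))=C_\II(p(x'))$ for $p\in P$, and $x'=p_0'(x_1)$, so this is exactly what the hypothesis gives). The enlargement belongs on the $\II^*$ side: the $\Xi$ you feed into Lemma~\ref{general shift restated} must absorb not just the shifts from $P$ but the shifts from $P\circ P_0'$, since $x'=p_0'(p_1'(x))$ and $p_0'$ intervenes between $p_1'$ and $p$. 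You do say this at the end (``choose $\Xi$ large enough to swallow the shift-vectors from $P_0'$''), which is the right fix.

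Second, and more substantively, your handling of rescales is not quite complete. Elements of $P\circ P_0'$ are $\sigma_\alpha\circ R_\beta$ with varying $R_\beta$; pushing $R_\beta$ past the shifts (Corollary~\ref{commutator}) reduces the desired equalities to asking that $(C_{\II^*}\circ R_\beta)(\sigma_{\alpha'}(p_1'(x)))$ be constant over the relevant $\alpha'$, simultaneously for each of the finitely many $\beta$. Your observation that $C_{\II^*}\circ R_\beta\in\CC_S$ is correct but is only half of what is needed: Lemma~\ref{general shift restated} takes a \emph{single} $r$-coloring as input, so you must pass to the product coloring $C'=C'[C_{\II^*},\{R_\beta\}]$ (with $r^{|\{R_\beta\}|}$ colors), verify $C'\in\CC_S$ --- this is precisely the content of Lemma~\ref{compatible lemma} --- and apply Lemma~\ref{general shift restated} to $C'$. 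This product-coloring step is what the paper isolates as the ``$\HH$-compatibility'' bullet in Proposition~\ref{sufficiency for compact stab}, and it is the one ingredient your sketch gestures at but does not make explicit.
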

\noindent Some un-wrapping of definitions reveals that this indeed implies Proposition~\ref{stable ext} (currently we omit the details, although the broad strokes are discussed in Subsection~\ref{motivation}).

In our next (and final) section, we shall prove Proposition~\ref{legit final goal}, completing the goal of this paper. Our approach is rather general, showing that these sorts of results can be proven for any system $(X,\Sigma_\Omega,\PP_\Omega,\CC)$, as long as you have a result like Lemma~\ref{general shift restated}, along with some additional rather mild properties.

We conclude this section with proving one last mild property, which will be need later. 

\begin{lem}\label{compatible lemma}
    Consider any $S\subset [n-1]$.

    Let $\Omega$ be some set of pairs $(A,B)$ of disjoint $A,B\subset [n-1]$.

    For any $C:X\to [r]\in \CC_S$ and finite $H\subset \QQ_*$, we can define an auxiliary coloring $C':X\to [r^{|H|}]\in \CC_S$, where 
    \[C'(x^{(1)}) = C'(x^{(2)})\]if and only if $C(h(x^{(1)}) = C(h(x^{(2)}))$ for all $h\in H$.
    \begin{proof}
        Let $\pi= \pi_S:X\to \Q_+$ be the map $(\vec{u};x)\mapsto x\prod_{s\in S} u_s$. By definition, $\CC$ is the set of colorings of the form $C_0\circ \pi$ for arbitrary colorings $C_0$ of $\Q_+$.

        Let $H = \{h_1,\dots,h_m\}$. We wish to take $C'(x) := 1+\sum_{i=1}^m r^{i-1}(C(h_i(x))-1)$. It is easy to see that $C'$ would have the desired properties, so we just need to check that $C'\in \CC_S$. 
        
        For this, it suffices to verify that
        \[\pi(x^{(1)}) = \pi(x^{(2)})\implies C'(x^{(1)}) = C
        (x^{(2)})\] for all $x^{(1)},x^{(2)}\in X$. This follows quickly from the fact that $\pi(x^{(1)}) =\pi(x^{(2)})$ implies $\pi(h(x^{(1)})) = \pi(h(x^{(2)}))$ for all choicse of $x^{(1)},x^{(2)}\in X,h\in \QQ_*, S\subset [n-1]$.
    \end{proof}
\end{lem}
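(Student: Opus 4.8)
The plan is to build $C'$ explicitly as a mixed-radix encoding of the tuple of colors $\bigl(C(h(x))\bigr)_{h\in H}$, and then verify the two required properties: the claimed ``if and only if'', which is just uniqueness of base-$r$ expansions, and the membership $C'\in\CC_S$, which is the only place the structure of $\QQ_*$ actually enters.

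First I would set up notation. Let $\pi_S:X\to\Q_+$ be the evaluation map $(\vec{u};x)\mapsto x\prod_{s\in S}u_s$; by definition $\CC_S$ is exactly the collection of colorings that factor through $\pi_S$, i.e.\ those of the form $C_0\circ\pi_S$ for some coloring $C_0$ of $\Q_+$. Enumerate $H=\{h_1,\dots,h_m\}$ with $m=|H|$, and set
\[C'(x):=1+\sum_{i=1}^m r^{i-1}\bigl(C(h_i(x))-1\bigr),\]
which takes values in $[r^m]=[r^{|H|}]$. The digit string $\bigl(C(h_1(x))-1,\dots,C(h_m(x))-1\bigr)\in\{0,\dots,r-1\}^m$ is recovered from $C'(x)-1$ by reading its base-$r$ digits, so $C'(x^{(1)})=C'(x^{(2)})$ holds if and only if $C(h_i(x^{(1)}))=C(h_i(x^{(2)}))$ for every $i$, which is precisely the desired equivalence.

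The remaining point is that $C'\in\CC_S$, i.e.\ that $C'$ is constant on fibres of $\pi_S$. For this I would record the key compatibility: every $h\in\QQ_*$ intertwines $\pi_S$ with scalar multiplication on $\Q_+$. Concretely, if $h=R_{(q_1,q_2)}$ then
\[\pi_S\bigl(h(\vec{u};x)\bigr)=\pi_S(q_1\cdot\vec{u};\,q_2 x)=q_2 x\prod_{s\in S}q_1 u_s=\bigl(q_2 q_1^{|S|}\bigr)\,\pi_S(\vec{u};x),\]
so $\pi_S\circ h=\mu_h\cdot\pi_S$ for the positive rational scalar $\mu_h:=q_2 q_1^{|S|}$. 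Consequently, if $\pi_S(x^{(1)})=\pi_S(x^{(2)})$ then $\pi_S(h(x^{(1)}))=\mu_h\pi_S(x^{(1)})=\mu_h\pi_S(x^{(2)})=\pi_S(h(x^{(2)}))$ for every $h\in H$; since $C=C_0\circ\pi_S$, this yields $C(h(x^{(1)}))=C(h(x^{(2)}))$ for all $h\in H$, hence $C'(x^{(1)})=C'(x^{(2)})$. Thus $C'$ factors through $\pi_S$ and therefore lies in $\CC_S$, completing the argument.

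There is essentially no hard step here: the content is the observation that dilations, after composition with $\pi_S$, act on $\Q_+$ merely by rescaling, so any color statistic assembled from the values $C(h(x))$ still depends on $x$ only through $\pi_S(x)$. The only things to be mildly careful about are bookkeeping (fixing an enumeration of $H$ so $C'$ is well defined) and the digit-extraction claim behind the equivalence; alternatively one could phrase $C'$ as the composite of $x\mapsto\bigl(C(h_1(x)),\dots,C(h_m(x))\bigr)$ with any fixed injection $[r]^m\hookrightarrow[r^{|H|}]$, which makes both properties transparent and dispenses with the explicit base-$r$ formula. Note that the set $\Omega$ in the hypothesis plays no role and can be ignored.
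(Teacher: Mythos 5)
Your proof is correct and matches the paper's argument step for step: same explicit base-$r$ formula for $C'$, same reduction to showing $C'$ is constant on fibres of $\pi_S$, and same key observation that each $h\in\QQ_*$ preserves $\pi_S$-fibres. You merely make explicit the computation $\pi_S\circ h=\mu_h\cdot\pi_S$ with $\mu_h=q_2q_1^{|S|}$, which the paper states without details, so this is a faithful filling-in rather than a different route.
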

\begin{rmk}
    The property ``$\pi(x^{(1)}) =\pi(x^{(2)})$ implies $\pi(h(x^{(1)})) = \pi(h(x^{(2)}))$'' is a prime example of a map $h$ behaving `homogenously' with respect to a set of colorings $\CC$ (meaning that the colorings can reliably distinguish between `$x$' and `$h(x)$' for all points $x\in X$). This kind of notion will be informally mentioned several times in the upcoming section, and will be more thoroughly touched upon in a future draft.
\end{rmk}

\hide{
Indeed, suppose for a moment we could prove the following. 

\begin{delusion}\label{compact stable}
    Consider any $S\subset [n-1]$.

    Let $\Omega$ be a set of pairs $(A,B)$ of disjoint sets $A,B\subset [n-1]$ where $|A|+|S|> |B|$.

    For each finite $P\subset \PP_\Omega$ and $r<\infty$, there exists some finite $P'\subset \PP_\Omega$ so that, given any $x\in X$ and $r$-coloring $C:X\to [r]$ with $C\in \CC_S$, we can find $p'\in P'$ so that
    \[C(p\circ p'(x)) = C(p'(x))\]for all $p\in P$.
\end{delusion}
\noindent Recalling the languague of Subsection~\ref{digression}, this says that the system $(X,\PP_\Omega,\CC_S)$ is compactly-Ramsey. From there,

We shall now establish a corrected version of the false result from Subsection~\ref{motivation}.
\begin{prp}\label{culmination}
    Let $\mathfrak{X}$ be a set of new $n$-families. Write $\Omega := \newp(\mathfrak{X})$, and suppose $|A|>|B|$ for each $(A,B)\in \Omega$.

    Then the triple $(X,\PP_\Omega,\bigcup_{S\subset [n-1]} \CC_S)$ is multi-task Ramsey. 
    
    In particular, for every $r<\infty$ and finite $Q\subset \Q_+$, there exists a finite set $Q_*\subset \Q_+$ such that:

    Given any $\vec{u}\in \Q_+^{n-1}$, along with an $r$-coloring $C:\Q_+\to [r]$, then there exists $q_*\in Q_*$ and $x\in \Q_+$, so that
    \[C( \varphi_\II(q \cdot (q_*\cdot u_1,\dots,q_*\cdot u_{n-1},x))) = C( \varphi_{\{f(\II)\}}(q \cdot (q_*\cdot u_1,\dots,q_*\cdot u_{n-1},x)))\]for each $\II\in \mathfrak{X}$.
    \begin{proof}
        We shall focus on showing the triple $(X,\PP_\Omega,\bigcup_{S\subset [n-1]} \CC_S)$ is multi-task Ramsey. By Proposition~\ref{compactness implies multitask}, it suffices to show that this triple compactly Ramsey. 

        We first note the following simple claim.
        \begin{clm}
            Let $X$ be an arbitrary space, $\PP$ be an arbitrary collection of compositions, and $\CC^{(1)},\CC^{(2)}$ be arbitrary sets of colorings.
        
            Suppose that $(X,\PP,\CC^{(1)}),(X,\PP,\CC^{(2)})$ are both compactly Ramsey. Then $(X,\PP,\CC^{(1)}\cup \CC^{(2)})$ is also compactly Ramsey.
            \begin{proof}
                Given any finite $P\subset \PP$ and $r<\infty$, we can find finite $P'_1,P'_2\subset \PP$, where for all $x\in X,C_1\in \CC^{(1)}_r,C_2\in \CC^{(2)}_r$, we can find $p_1\in P_1',p_2\in P_2'$ where
                \[C_1(p\circ p_1(x)) = C_1(p_1(x))\quad \text{and}\quad C_2(p\circ p_2(x)) = C_2(p_2(x))\]for all $p\in P$.

            Thus taking $P' := P'_1\cup P'_2$ will have the desired properties.
            \end{proof}
        \end{clm}\noindent Whence, applying Proposition~\ref{compact stable} repeatedly proves $(X,\PP_\Omega,\bigcup_{S\subset [n-1]}\CC_S)$ is compactly Ramsey as desired.

        To see the second part holds, we start with some coloring $C:\Q_+\to [r]$. From there, we define colorings $\CC_\II: X\to [r]; (\vec{u},x) \mapsto C(x\prod_{i\in f(\II)\setminus \{n\}} u_i)\in \CC_{f(\II)\setminus \{n\}}$. We leave the details for the next version.\zh{TODO: fill in details!}
    \end{proof}
\end{prp}}

\zh{mention that the ``how to induct'' section is also a kind of perturbation-type idea. at a ``lower order or something''}

\section{A refined multi-task Ramsey result}\label{finer multitask}

As we alluded to in Remark~\ref{allusion}, being compactly Ramsey is a rather ridiculous property. Already when $P$ contains a single map, things are tricky.
\begin{rmk}\label{too strong}
    Consider a map $p\in \PP$. Let $X'\subset X$ be the set of points $x$ where \[x,p(x), p\circ p(x), \dots \]are all distinct. It is not hard to define a $2$-coloring $C$ of $X$ where $C(x)\neq C(p(x))$ for all $x\in X'$ (this is essentially what is done in Remark~\ref{trivial N}). And we can also define a $3$-coloring of $X$ where $C(x) \neq C(p(x))$ for all $x\in X$ where $x\neq p(x)$.

    Thus, if $\CC$ contains all colorings of $X$, we already have that $P'\to_{\CC_3} \{p\}$ implies that: for each $x\in X$, there exists $p'\in P'$ where $p\circ p'(x) = p'(x)$ (meaning $p$ must have fixed points).
\end{rmk}\noindent For larger sets $P$, the restrictions only continue to grow.

Some of these difficulties are handled by the restrictions on the colorings we consider. However, due to the `homogeneous' nature of dilations (as were discussed in Remark~\ref{homogen discuss}), the framework of Section~\ref{digression} still cannot work. Our solution is to work with a weaker property.

\subsection{A dissection} Let us dissect what is really necessary in the proof of Proposition~\ref{compactness implies multitask}.

First, let's state a precise task:

\begin{defn}\label{instance definition}
    Suppose we are given maps $p_1,\dots, p_\ell \in \PP$, along with sets of colorings $\CC^{(1)},\dots, \CC^{(\ell)}$. We shall say that $((p_i,\CC^{(i)}))_{i\in [\ell]}$ is a \textit{multi-task Ramsey instance}, if for every choice of $(C_i\in \CC^{(i)})_{i\in [\ell]}$, there exists some $x\in X$ where
    \[C_i(p_i(x)) = C_i(x)\]for each $i\in [\ell]$. 
\end{defn}

Now suppose one wants to prove that some $((p_i,\CC^{(i)}))_{i\in [\ell]}$ is a multi-task Ramsey instance. By mimicing our proof of Proposition~\ref{compactness implies multitask}, it would suffice to find sets $P_1',\dots,P_\ell' \subset \PP$, where, for each $t\in [\ell]$, $C_t\in \CC^{(t)}$, there exists some $p_t'\in P_t'$ where
\[C_t(p_t \circ p_{new} \circ p_t' (x)) = C_t(p_{new}\circ p_t'(x))\]for each $p_{new} \in P_{new} := P_{t+1}' \circ \dots P_{\ell}'$.

Let us state this more formally. 

\begin{defn}
    Given $p\in \PP$ and $H\subset \PP$, and a coloring $C$ of $X$, we say that $x$ is $H$-stably $p$-consistent if 
    \[C(p\circ h(x)) = C(h(x))\]for all $h\in H$.
\end{defn}
\begin{defn}
    Given a set of colorings $\CC$, along with $H\subset \PP$ and $p\in \PP$, we say that $P'$ is a $(H,p,\CC)$-stabilizer if for every $C\in \CC$ and $x\in X$, we can find $p'\in P'$ where $p'(x)$ is $H$-stably $p$-consistent under $C$.
\end{defn}

\begin{defn}
    Given $p\in \PP$ and a set of colorings $\CC$, we say that $(p,\CC)$ is \textit{compactly-stabilizable} if for every finite $H\subset \PP$ and $r<\infty$, we can find a finite $P'\subset \PP$ which is a $(H,p,\CC_{\le r})$-stabilizer. 
\end{defn}

The sufficient condition from before tells us that:
\begin{prp}\label{comp stab to multistab}
    Suppose $((p_i,\CC^{(i)}))_{i\in [\ell]}$ is a finite tuple of pairs $(p_i,\CC^{(i)})$ which are each compactly-stabilizable.

    Then, for each finite $H\subset \PP$, there exists some finite $P'\subset \PP$, such that for each $x\in X$ and choice of $C_i\in \CC^{(i)}$ for $i\in [\ell]$, we can find $p'\in P'$ so that, writing $x' := p'(x)$, we have:
    \[C_i(p_i\circ h(x')) = C_i(h(x'))  \]for all $h\in H$.
    \begin{proof}
        One simply inducts, doing ``backtracking'' as in Proposition~\ref{compactness implies multitask}. 

        Suppose we are given some finite $H\subset \PP$, along with $r<\infty$. Initialize with $H_t := H$.
        
        First, since $(p_\ell,\CC^{(\ell)})$ is compactly-stabilizable (and $H_\ell\subset \PP$ is finite), we can define a finite $P'_\ell = P'_\ell((p_\ell,\CC^{(\ell)}),r,H_\ell)$ so that for each choice of $C_\ell\in \CC_{\le r}^{(\ell)}$ and $x\in X$, there exists $p_\ell'\in P_\ell'$, where
        \[C_\ell(p_\ell\circ h \circ p_\ell'(x)) = C_\ell(h\circ p_\ell'(x))\]for all $h\in H_\ell$. 
        
        We then define $H_{\ell-1} := H_\ell \circ P_\ell'$. Clearly $H_{\ell-1}$ is finite, as $|H_{\ell-1}|\le |H_\ell||P_\ell'|$.

        Now for each time $t =\ell-1,\dots, 1$, we simply repeat the above step. We consider the finite set $H_t\subset \PP$ defined previously. Due to $(p_t,\CC^{(t)})$ being compactly-stabilizable (and the finite-ness of $H_t$), we can find some finite $P_t' =P'_t((p_t,\CC^{(t)}),r,H_t)\subset \PP$ where
        for each $C_t\in \CC_{\le r}^{(t)},x\in X$, we can pick $p_t'\in P_t'$ so that
        \[C_t(p_t\circ h_t\circ p_t'(x)) = C_t(h_t\circ p_t'(x))\]for all $h_t\in H_t$. Then we continue by setting $H_{t-1} := H_t\circ P_t'\subset \PP$ (which is again finite). 

        After running this up to time $t=1$. We simply take $P' := P_\ell'\circ P_{\ell-1}'\circ \dots \circ P_1'$ (which is again finite). This will have the desired properties.
    \end{proof}
\end{prp}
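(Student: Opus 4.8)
The plan is to carry out exactly the ``look into the future'' backtracking of the proof of Proposition~\ref{compactness implies multitask}, now organized as an explicit downward recursion. Fix the finite set $H \subset \PP$, and fix a bound $r < \infty$ so that every $C_i$ we must handle ranges inside $[r]$. I would process the $\ell$ tasks in reverse order $t = \ell, \ell-1, \dots, 1$, maintaining at each stage a finite set $H_t \subset \PP$ of ``future perturbations still to be absorbed'' together with a finite stabilizer set $P_t' \subset \PP$ extracted from compact-stabilizability of $(p_t, \CC^{(t)})$.

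Concretely: initialize $H_\ell := H$. Given a finite $H_t$, apply compact-stabilizability of $(p_t, \CC^{(t)})$ with parameters $H_t$ and $r$ to obtain a finite $P_t' \subset \PP$ that is an $(H_t, p_t, \CC^{(t)}_{\le r})$-stabilizer, and then set $H_{t-1} := H_t \circ P_t' = \{\, h \circ p : h \in H_t,\ p \in P_t' \,\}$, which is again finite. Having run this down to $t = 1$, put $P' := P_\ell' \circ P_{\ell-1}' \circ \cdots \circ P_1'$; this is a finite subset of $\PP$ since $\PP$ is closed under composition. Unwinding the recursion $H_{t-1} = H_t \circ P_t'$ gives the telescoping identity $H_i = H \circ P_\ell' \circ P_{\ell-1}' \circ \cdots \circ P_{i+1}'$ for each $i \in [\ell]$, and this is the only structural fact the verification will use.

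For the verification, suppose we are given $x \in X$ and colorings $C_i \in \CC^{(i)}_{\le r}$ for $i \in [\ell]$. I would pick the factors $p_1', p_2', \dots, p_\ell'$ greedily moving \emph{forward} in $t$: set $y_0 := x$, and having chosen $p_1', \dots, p_{t-1}'$ and formed $y_{t-1} := p_{t-1}' \circ \cdots \circ p_1'(x)$, use that $P_t'$ is an $(H_t, p_t, \CC^{(t)}_{\le r})$-stabilizer (applied to the point $y_{t-1}$ and the coloring $C_t$) to pick $p_t' \in P_t'$ so that $y_t := p_t'(y_{t-1})$ is $H_t$-stably $p_t$-consistent under $C_t$. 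Set $x' := y_\ell = (p_\ell' \circ \cdots \circ p_1')(x)$ and note $p_\ell' \circ \cdots \circ p_1' \in P'$. To check the conclusion, fix $i \in [\ell]$ and $h \in H$, and let $h^* := h \circ p_\ell' \circ \cdots \circ p_{i+1}'$. By the telescoping identity $h^* \in H_i$, and since $x' = (p_\ell' \circ \cdots \circ p_{i+1}')(y_i)$ we have $h(x') = h^*(y_i)$. Applying $H_i$-stable $p_i$-consistency of $y_i$ under $C_i$ with the element $h^* \in H_i$ gives $C_i(p_i \circ h(x')) = C_i\big(p_i(h^*(y_i))\big) = C_i(h^*(y_i)) = C_i(h(x'))$, which is exactly what is wanted.

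I do not expect a genuine obstacle: once compact-stabilizability and the backtracking mechanism of Proposition~\ref{compactness implies multitask} are available, this is bookkeeping. The two points requiring care are (i) the composition order — composing each newly built stabilizer set on the \emph{outside}, so that the telescoping $H_i = H \circ P_\ell' \circ \cdots \circ P_{i+1}'$ comes out correctly and the greedily chosen $h^* = h \circ p_\ell' \circ \cdots \circ p_{i+1}'$ genuinely lies in $H_i$ — and (ii) checking that every set produced stays finite, which is immediate since finite compositions of finite sets are finite and compact-stabilizability supplies finite stabilizers.
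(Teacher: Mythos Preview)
Your proposal is correct and follows essentially the same approach as the paper: the backward recursion defining $H_t$ and $P_t'$ via compact-stabilizability, the updates $H_{t-1} := H_t \circ P_t'$, and the final choice $P' := P_\ell' \circ \cdots \circ P_1'$ all match exactly. In fact you supply the explicit forward verification (choosing $p_t'$ greedily and using the telescoping $H_i = H \circ P_\ell' \circ \cdots \circ P_{i+1}'$) that the paper omits with the phrase ``This will have the desired properties.''
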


So, to end up proving Proposition~\ref{stable ext}, one only needs a way to show that pairs $(p,\CC)$ are compactly-stabilizable. We do this in the next subsection, completing the proof.

\subsection{Stabilizing}

As we have often doen before, we consider a ground set $X$ and a set of perturbations $\PP$. But now, we also introduce another set of perturbations $\HH$ (which we think of as being `homogeneous transformations', which are transformations that colorings can react to). 

\thide{A prime example to keep in mind, is that often $\CC$ is taken to be the set of colorings $C$ on $X$, of the form $C_0\circ \pi$}

\subsubsection{Commuting nicely} We isolate some ideas which are somewhat convenient (to ensure technical things like closure), but is ultimately not at the heart of the argument.

Now, given $X,\PP,\HH$, we say that $(\PP,\HH)$ \textit{commute nicely} \zh{maybe more apt name is commute normally or something? but I don't know group theory...}if for every $h\in \HH,p\in \PP$, there exists $p'\in \PP,h'\in \HH$ so that
\[h\circ p = p'\circ h'.\]

\begin{rmk}
    Here is an example. If we take $X = \N$, and
    \[\PP : = \{n\mapsto n+m:m\in \N\},\]
    \[\HH := \{n\mapsto dn: d\in \N\},\]
    then $(\PP,\HH)$ commute nicely. Note however, that $(\HH,\PP)$ do \textit{not} commute nicely here (since the map $n\mapsto 2n+1$ cannot be be written as $h'\circ p'$ for some $h'\in \HH,p'\in \PP$).\zh{add example/exercise about representing vdw in this framework}
\end{rmk}

We note some basic facts about this definition.
\begin{prp}
    Suppose $(\PP,\HH)$ commute nicely. Then $\PP\circ \HH := \{p\circ h:p\in \PP,\,h\in \HH\}$ is closed under composition.
    \begin{proof}
        For $p_1,p_2 \in \PP,h_1,h_2\in \HH$, we can find $p'\in \PP$ and $h'\in \HH$ so that
        \[h_1\circ p_2 = p'\circ h',\]whence
        \[(p_1\circ h_1)\circ (p_2\circ h_2) = (p_1\circ p')\circ (h'\circ h_2)\in \PP\circ \HH.\]
    \end{proof}
\end{prp}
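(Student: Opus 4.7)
My plan is to verify closure by taking an arbitrary pair of elements in $\PP\circ\HH$, composing them, and pushing the "wrong-order" interior factor past its neighbor using the commute-nicely property. Concretely, given $p_1\circ h_1$ and $p_2\circ h_2$ with $p_i\in\PP$ and $h_i\in\HH$, I want to show
\[(p_1\circ h_1)\circ(p_2\circ h_2)\in\PP\circ\HH.\]
The only obstruction to immediately reading this as an element of $\PP\circ\HH$ is the middle piece $h_1\circ p_2$, which is an element of $\HH$ sitting to the left of an element of $\PP$ — exactly the configuration that the commute-nicely hypothesis is designed to repair.

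So the single key step is to invoke commute-nicely on this middle pair to obtain $p'\in\PP$ and $h'\in\HH$ with $h_1\circ p_2=p'\circ h'$. Substituting and reassociating gives
\[(p_1\circ h_1)\circ(p_2\circ h_2)=p_1\circ(h_1\circ p_2)\circ h_2=(p_1\circ p')\circ(h'\circ h_2),\]
which is manifestly of the form $\PP\text{-element}\circ\HH\text{-element}$, provided that $\PP$ and $\HH$ are each individually closed under composition. Closure of $\PP$ was assumed from the start of the paper (when $\PP$ was introduced as a set of perturbations closed under composition), and closure of $\HH$ is a standing implicit assumption on these "homogeneous transformations" — indeed, without such a closure hypothesis on $\HH$, the example of dilations on $\N$ mentioned in the preceding remark would not even sit in a framework where $\PP\circ\HH$ could hope to be closed.

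I expect no real difficulty here: the conclusion is a purely formal consequence of the one hypothesis, and the proof is essentially a single substitution. The only subtle point worth flagging is the implicit reliance on $\HH$ being closed under composition, which might warrant being stated explicitly in the definition of "commute nicely" (or as a running assumption on $\HH$) for the sake of clarity.
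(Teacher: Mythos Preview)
Your proof is correct and follows exactly the same approach as the paper: both apply the commute-nicely hypothesis to the middle factor $h_1\circ p_2$ to rewrite it as $p'\circ h'$, then regroup to obtain $(p_1\circ p')\circ(h'\circ h_2)\in\PP\circ\HH$. Your observation that the argument tacitly uses closure of $\HH$ under composition is a fair point that the paper leaves implicit.
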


\begin{prp}\label{covering}
    Given finite $P\subset \PP$ and finite $T\subset \PP\circ \HH$, there exist finite $P'\subset \PP$ and $H'\subset \HH$ so that
    \[P\circ T\subset P'\circ H'\]
    (meaning for each $p \in P, t\in T$, there exists $p'\in P',h'\in H'$ where $p\circ t = p'\circ h'$).
    \begin{proof}
        Obvious.
    \end{proof}
\end{prp}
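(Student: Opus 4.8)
The plan is a direct unwinding of definitions, using only the standing assumption that $\PP$ is closed under composition (the same fact invoked in the proof of the preceding proposition that $\PP\circ\HH$ is closed under composition). First I would, for each $t\in T$, fix one decomposition $t = q_t\circ h_t$ with $q_t\in\PP$ and $h_t\in\HH$; there may be several such decompositions, but a single choice per $t$ is all that is needed, and there are only finitely many $t$ to handle. Then for any $p\in P$ one computes
\[ p\circ t = p\circ q_t\circ h_t = (p\circ q_t)\circ h_t, \]
where $p\circ q_t\in\PP$ by closure under composition. I would therefore take
\[ P' := \{\, p\circ q_t : p\in P,\ t\in T \,\} \subset \PP \qquad\text{and}\qquad H' := \{\, h_t : t\in T \,\} \subset \HH, \]
both of which are finite since $|P'|\le |P|\,|T|$ and $|H'|\le |T|$. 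The displayed identity then exhibits $p\circ t\in P'\circ H'$ for every $p\in P$ and $t\in T$, i.e.\ $P\circ T\subset P'\circ H'$, which is exactly what is claimed.

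I do not expect any genuine obstacle here. The only points to be mildly careful about are: making the assignment $t\mapsto (q_t,h_t)$ once and for all (so that $H'$ is genuinely finite rather than a union over all possible decompositions), and observing that $\PP$ being closed under composition is part of the ambient framework. It is perhaps worth remarking that the \emph{commute nicely} hypothesis on $(\PP,\HH)$ is not actually used for this statement: that hypothesis is needed precisely when one must move an $\HH$-factor leftward past a $\PP$-factor (rewriting $h\circ p$ as $p'\circ h'$), whereas in every product $p\circ t$ occurring here the $\PP$-factors already sit to the left of the single trailing $\HH$-factor.
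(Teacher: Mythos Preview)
Your proof is correct and is exactly the obvious argument the paper has in mind: decompose each $t\in T$ once as $q_t\circ h_t$, use closure of $\PP$ under composition to absorb $p\circ q_t$, and collect the finitely many resulting pieces. Your observation that the ``commute nicely'' hypothesis is not needed here is also correct.
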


\subsubsection{The important properties}

Similarly, we say $(\PP,\HH)$ \textit{uncommutes very-nicely}\footnote{The author apologizes for the atrocious terminology here.}, if for every $p\in \PP,h\in \HH$, there exists $p'\in \PP$ so that
\[p\circ h = h\circ p'.\]

We now introduce auxiliary colorings.
\begin{defn}
    Given an $r$-coloring $C:X\to [r]$, and a finite $H\subset \HH$, we can define an $r^{|H|}$-coloring $C'=C'[C,H]: X \to [r^{|H|}]$, where
    \[C'(x_1) = C'(x_2)\]if and only if
    \[C(h(x_1)) = C(h(x_2))\]for all $h\in H$. Indeed, one simply fixes some bijection $\iota$ between the set of colorings $\{c:H\to [r]\}$ and $[r^{|H|}]$, and then can define $C'(x):= \iota (h\mapsto C(h(x))) $. Going forward, we shall assume that some arbitrary choice of $C'$ is always specified.\zh{TODO: clarify how we indentify the set of maps $H\to [r]$ with $[r^{|H|}]$ or whatever.}
\end{defn}

\begin{defn}
    Given a collection of colorings $\CC$, and a collection of maps $\HH$, we say that $\CC$ is $\HH$-compatible if: For each $C\in \CC$, and every finite $H\subset \HH$, we have that $C'[C,H]\in \CC$.
\end{defn}

\subsubsection{Putting the tools together}
We can now show how these basic properties give us a rather general sufficiency criterion for being compactly-stabilizable.
\begin{prp}\label{sufficiency for compact stab}
    Consider a triple $(X,\TT,\CC)$, where $\TT = \PP\circ \HH$ for some choice of perturbations $\PP,\HH$ on $X$. 
    
    Suppose that:
    \begin{itemize}
        \item $(\PP,\HH)$ commutes nicely;
        \item $(\PP,\HH)$ uncommutes very-nicely;
        \item $\CC$ is $\HH$-compatible;
        \item For every $r<\infty$ and finite $P\subset \PP$, there exists a finite $T\subset \TT$ so that\footnote{Recall that given a set of colorings $\CC$, we write $\CC_{\le r}$ to denote the subset of $C$ where $C(X)\subset [r]$.} $T\to_{\CC_{\le r}} P$ (in the notation of Defintion~\ref{compact definition}).
    \end{itemize}
    Then for every finite $P\subset \PP,T\subset \TT$ and $r<\infty$, we can define a finite $T'\subset \TT$ so that, for each $C\in \CC_{\le r}$ and $x\in X$, we can find $t'\in T'$ where
    \[C(p\circ t\circ t'(x)) = C(t\circ t'(x))\]for all $p\in P,t\in T$.
    
    In particular, for each $p\in \PP$, we have that $(p,\CC)$ is compactly-stabilizable (wrt to the set of perturbations $\TT$).
    \begin{proof}
        Fix any finite $T\subset \TT$ and $r<\infty$. 
        
        We can define maps $\pi_1:\TT\to\PP,\pi_2:\TT \to \HH$, so that $\pi_1(t) \circ \pi_2(t) = t$ for each $t\in \TT$. With foresight, write $H^* := \pi_2(T), P^* := \pi_1(T) \cup (P\circ \pi_1(T))$. By our second bullet, we can define a finite $P_{new}\subset \PP$, so that for each $p^*\in P^*, h^*\in H^*$, there exists $p_{new} \in P_{new}$ where
        \[p^*\circ h^* = h^* \circ p_{new}.\]
        
        \hide{The relevent fact about the above definition is: if $\{h^*\circ p_{new}:(x)) = C(h^*(x))$ some coloring $C$, then for each $p\in P,t\in T$, we will have
        \[C(p\circ t(x))= C( = C(t(x))\]}

        \hide{
        By Lemma~\ref{covering} (and the assumption of our first bullet), we can define finite sets $P^*\subset \PP$ and $H^*\subset \HH$ so that \[ T\cup P\circ T\subset P^*\circ H^*.\]
        Then by the assumption of the second bullet, we can find a finite $P_{newer}\subset \PP$ so that for each $p\in P$ and $t= p_{new} \circ h_{new}$, there exists $p_{newer}\in P_{newer}$ where
        \[p\circ p_{new}\circ h_{new} = h_{new} \circ p_{newer}.\]}

        Let $r_{new} := r^{|H^*|}$. By the assumption of our third bullet, there exists some finite $T'\subset\TT$ so that for each $C\in \CC_{\le r_{new}},x\in X$, we can find $t'\in T'$ where
        \[C(p_{new,1}\circ t'(x)) = C(p_{new,2}\circ t'(x))\]for all $p_{new,1},p_{new,2}\in P_{new}$.

        We claim this suffices. Indeed (by our third bullet), for any $C\in \CC_{\le r}$, the auxiliary coloring $C' = C'[C,H_{new}]$ belongs to $\CC_{\le r_{new}}$. Thus there exists some $t'\in T'$ where, writing $x' := t'(x)$, we get 
        \[C'(p_{new,1}(x')) = C'(p_{new,2}(x'))\]for all $p_{new,1},p_{new,2}\in P_{new}$. By definition of $C'$, this means that
        \[C(h^*\circ p_{new,1}(x')) = C(h^*\circ p_{new,2}(x'))\]for all $p_{new,1},p_{new,2}\in P_{new}$ and $h^*\in H^*$.
        
        In turn (by our choice of $P_{new}$), this implies that
        \[C(p_1^*\circ h^*(x'))= = C(p_2^*\circ h^*(x'))\]for each $p_1^*,p_2^*\in P^*$ and $h^*\in H^*$. We are then done, noting that for each $p\in P,t\in T$, we can find $p_1^*,p_2^*\in P^*$ and $h^*\in H^*$ so that
        \[p\circ t = p_1^*\circ h^*,\,t = p_2^*\circ h^* \](this is simply by definition of $H^*,P^*$, we leave verification to the reader in this version).
    \end{proof} 
\end{prp}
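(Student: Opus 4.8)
The plan is to push the whole statement down to a \emph{single} application of the fourth bullet (the compact-Ramsey hypothesis), paying for this with a one-time inflation of the number of colours via $\HH$-compatibility. Fix finite $P\subset\PP$, finite $T\subset\TT$ and $r<\infty$. Since $\TT=\PP\circ\HH$, choose for each $t\in T$ a factorisation $t=\pi_1(t)\circ\pi_2(t)$ with $\pi_1(t)\in\PP$ and $\pi_2(t)\in\HH$, and set $H^*:=\{\pi_2(t):t\in T\}\subset\HH$ and $P^*:=\{\pi_1(t):t\in T\}\cup\{p\circ\pi_1(t):p\in P,\ t\in T\}\subset\PP$ (both finite, using that $\PP$ is closed under composition). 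The purpose of $P^*,H^*$ is that every map of the form $t$ or $p\circ t$, with $p\in P$ and $t\in T$, equals $p^*\circ h^*$ for some $p^*\in P^*$, $h^*\in H^*$; moreover for a fixed $t$ one may use the \emph{same} $h^*=\pi_2(t)$ for both $t=\pi_1(t)\circ h^*$ and $p\circ t=(p\circ\pi_1(t))\circ h^*$.

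Next I would commute the $P^*$-parts to the right of the $H^*$-parts. Using that $(\PP,\HH)$ uncommutes very-nicely, for each $p^*\in P^*$ and $h^*\in H^*$ pick $p_{new}\in\PP$ with $p^*\circ h^*=h^*\circ p_{new}$, and let $P_{new}\subset\PP$ be the finite set of all these $p_{new}$. The key bookkeeping observation is then: if a point $x'$ has an ``$H^*$-monochromatic $P_{new}$-neighbourhood'' under $C$, meaning $C(h^*\circ p_{new,1}(x'))=C(h^*\circ p_{new,2}(x'))$ for all $p_{new,1},p_{new,2}\in P_{new}$ and all $h^*\in H^*$, then $C(p\circ t(x'))=C(t(x'))$ for every $p\in P$, $t\in T$ --- because, with $h^*:=\pi_2(t)$, both $t(x')$ and $p\circ t(x')$ are of the form $h^*\circ p_{new}(x')$ for some $p_{new}\in P_{new}$ and the same $h^*$.

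It remains to realise such an $x'$ as $t'(x)$ for $t'$ ranging over a finite $T'\subset\TT$ that depends only on $P,T,r$. Here I would form the auxiliary colouring $C'=C'[C,H^*]$: by $\HH$-compatibility it lies in $\CC$ and uses at most $r_{new}:=r^{|H^*|}$ colours, and by construction $C'(y_1)=C'(y_2)$ exactly when $C(h^*(y_1))=C(h^*(y_2))$ for all $h^*\in H^*$. Apply the fourth bullet with colour bound $r_{new}$ and the finite set $P_{new}\subset\PP$ to obtain a finite $T'\subset\TT$ with $T'\to_{\CC_{\le r_{new}}}P_{new}$; feeding it the colouring $C'$ and the point $x$ produces $t'\in T'$ such that the $P_{new}$-neighbourhood of $x':=t'(x)$ is monochromatic under $C'$, which by definition of $C'$ is precisely the ``$H^*$-monochromatic'' condition of the previous paragraph. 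This yields the displayed conclusion; the ``in particular'' clause follows by taking $P=\{p\}$ and letting $T$ be the given finite $H\subset\TT$, since then $t'(x)$ is $H$-stably $p$-consistent for every $C\in\CC_{\le r}$ and $x$, and finiteness of $T'$ is all that being a stabilizer requires.

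I expect the only real difficulty to be organisational: tracking at each step which composites stay in $\PP$ versus $\HH$, and verifying that the sets $H^*,P^*,P_{new},T'$ are all finite --- this is exactly where the ``commutes nicely'' (so that $\TT$ is itself closed under composition) and ``uncommutes very-nicely'' (so that pushing $P^*$ past $H^*$ spawns only finitely many new maps) hypotheses pull their weight. The one conceptually delicate choice is the \emph{order} of operations: the expensive compact-Ramsey hypothesis is invoked only once, at the very end, on a colouring $C'$ that has already been ``pre-loaded'' with all of $H^*$, so that the blow-up from $r$ to $r^{|H^*|}$ colours is harmless.
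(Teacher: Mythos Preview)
Your proposal is correct and is essentially identical to the paper's own proof: you define the same sets $H^*=\pi_2(T)$, $P^*=\pi_1(T)\cup(P\circ\pi_1(T))$, push $P^*$ past $H^*$ via ``uncommutes very-nicely'' to obtain $P_{new}$, inflate colours to $r_{new}=r^{|H^*|}$ via $\HH$-compatibility to form $C'=C'[C,H^*]$, and then invoke the fourth bullet once with $P_{new}$ and $r_{new}$ to produce $T'$. Your bookkeeping observation that $t$ and $p\circ t$ share the same $h^*=\pi_2(t)$ is exactly the point the paper leaves ``to the reader'' at the end, and your handling of the ``in particular'' clause is also what is intended.
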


Thus, in order to establish Proposition~\ref{legit final goal}, we just need to confirm that the properties above hold.
\begin{proof}[Proof of Proposition~\ref{legit final goal}]Fix some $S\subset [n-1]$ and set of pairs $\Omega$ where $|A|>|B|$ for each $(A,B)\in \Omega$.

Then define $X:=\Q_+^{n-1}\times \Q_+$ and $\Sigma_\Omega,\QQ_*,\PP_\Omega,\CC_S$ as in the last section.

We shall use Proposition~\ref{sufficiency for compact stab} to deduce that $(\sigma,\CC_S)$ is compactly-stabilizable for each $\sigma\in \Sigma_\Omega$. It will then follow that $(\sigma,\bigcup_{S\subset [n-1]} \CC_S)$ is compactly-stabilizable, by repeatedly applying the following observation: 
\begin{clm}
        Let $X$ be an arbitrary space, $\PP$ be an arbitrary collection of compositions, and $\CC^{(1)},\CC^{(2)}$ be arbitrary sets of colorings.
        
        Suppose that $(\sigma,\CC^{(1)}),(\sigma,\CC^{(2)})$ are both compactly-synchronizable. Then $(\sigma,\CC^{(1)}\cup \CC^{(2)})$ is also compactly-synchronizble.
        \begin{proof}
            Given any finite $P\subset \PP$ and $r<\infty$, we can find finite $P'_1,P'_2\subset \PP$, where for all $x\in X,C_1\in \CC^{(1)}_r,C_2\in \CC^{(2)}_r$, we can find $p_1'\in P_1',p_2'\in P_2'$ where
            \[C_1(\sigma \circ p_1(x)) = C_1(p_1(x))\quad \text{and}\quad C_2(\sigma\circ p_2(x)) = C_2(p_2(x)).\]
                
            Thus taking $P' := P'_1\cup P'_2$ will have the desired properties.
            \end{proof}
        \end{clm}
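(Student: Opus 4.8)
The plan is to exploit the fact that being compactly-stabilizable is an entirely ``pointwise in the coloring'' property: the defining condition quantifies over each individual $C\in \CC_{\le r}$ separately, so a stabilizer for a union of colorings can be produced by simply merging stabilizers for the two pieces. (Here I read ``compactly-synchronizable'' as the same notion as ``compactly-stabilizable''; the wording in the statement is a typo for the defined term.)

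Concretely, I would fix a finite $H\subset \PP$ and an integer $r<\infty$. Since $(\sigma,\CC^{(1)})$ is compactly-stabilizable, there is a finite $P_1'\subset \PP$ that is an $(H,\sigma,\CC^{(1)}_{\le r})$-stabilizer, and likewise a finite $P_2'\subset \PP$ that is an $(H,\sigma,\CC^{(2)}_{\le r})$-stabilizer. Set $P':=P_1'\cup P_2'$, which is finite. The claim is that $P'$ is an $(H,\sigma,(\CC^{(1)}\cup \CC^{(2)})_{\le r})$-stabilizer, which is exactly what is required for $(\sigma,\CC^{(1)}\cup \CC^{(2)})$ to be compactly-stabilizable.

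To verify this I would first record the elementary identity $(\CC^{(1)}\cup \CC^{(2)})_{\le r}=\CC^{(1)}_{\le r}\cup \CC^{(2)}_{\le r}$, immediate from $\CC_{\le r}=\{C\in \CC: C(X)\subseteq [r]\}$. Then, given any $C$ in this set and any $x\in X$: if $C\in \CC^{(1)}_{\le r}$, then since $P_1'$ is an $(H,\sigma,\CC^{(1)}_{\le r})$-stabilizer there is $p'\in P_1'\subseteq P'$ with $p'(x)$ being $H$-stably $\sigma$-consistent under $C$, i.e. $C(\sigma\circ h\circ p'(x))=C(h\circ p'(x))$ for all $h\in H$; symmetrically, if $C\in \CC^{(2)}_{\le r}$ one finds such a $p'$ in $P_2'\subseteq P'$. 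Either way $p'\in P'$ witnesses the stabilizer condition for the pair $(C,x)$. As $H$ and $r$ were arbitrary, the conclusion follows.

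There is essentially no obstacle here — the only points needing (trivial) attention are the set identity for $\CC_{\le r}$ under unions and the remark that $P_1',P_2'\subseteq P'$, so a witness valid for a sub-collection stays valid for the union. An obvious induction extends the statement from two collections to any finite family, which is precisely how it gets applied (to $\bigcup_{S\subseteq [n-1]}\CC_S$) in the proof of Proposition~\ref{legit final goal}.
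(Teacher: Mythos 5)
Your proof is correct and takes the same route as the paper: fix $(H,r)$, take stabilizers $P_1',P_2'$ for each sub-collection, set $P':=P_1'\cup P_2'$, and observe that any $C\in(\CC^{(1)}\cup\CC^{(2)})_{\le r}$ lands in one of $\CC^{(1)}_{\le r},\CC^{(2)}_{\le r}$ and is handled by the corresponding stabilizer sitting inside $P'$. If anything you have been more careful than the paper's own sketch, which suppresses the $h\in H$ quantifier and has some typographical slips ($P$ vs.\ $H$, $p_i$ vs.\ $p_i'$), but the underlying argument is identical.
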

    Now, assuming $(\sigma,\bigcup_{S\subset [n-1]} \CC_S)$ is compactly-stabilizable for each $\sigma \in \Sigma_\Omega$, then we can immediately apply Proposition~\ref{comp stab to multistab} to deduce Proposition~\ref{legit final goal}, as desired.

    So, it remains to establish $(\sigma,\CC_S)$ is compactly-stabilizable. First note that we can write $\PP_\Omega = \Sigma_\Omega \circ \QQ_*$ (indeed, this is the content of Lemma~\ref{closed}. Now, we shall invoke Proposition~\ref{sufficiency for compact stab} with $\TT:= \PP_\Omega,\PP:= \Sigma_\Omega,\HH:= \QQ_*$. We must verify the four bullets in the statement of said proposition.
    
    Now Corollary~\ref{commutator} proves that $(\Sigma_\Omega,\QQ_*)$ commutes nicely, and a practically identical argument shows that $(\Sigma_\Omega,\QQ_*)$ uncommutes very-nicely. So this handles our first two bullets.

    Lemma~\ref{compatible lemma} tells us that $\CC_S$ is $\HH$-compatible, handling bullet three. Finally, Lemma~\ref{general shift restated} implies the fourth bullet.

    This completes our proof (finally). 
\end{proof}

\begin{rmk}\label{inspection note}
    Inspecting the proof above, we see that we've proven that given any $S\subset [n-1]$ and set of pairs $\Omega$ where $|A|+|S|>|B|$ for each $(A,B)\in \Omega$, that $(\sigma,\CC_S)$ is compactly-stabilizable (wrt to $\PP_\Omega$).
\end{rmk}

\hide{\subsection{Improving our stability to handle dilative perturbations}

Let us state an equivalent version of Lemma~\ref{general term shift}, in terms of our new definitions.
\begin{lem}\label{general shift restated}
    Fix $S\subset [n-1]$.

    Let $\Omega$ be the set of pairs $(A,B)$ with $A,B\subset [n-1]$ with $|A|+|S|>|B|$.

    Then, for every choice of $r<\infty$ and finite set $\Xi \subset \Q_{\ge 0}^\Omega$, there exists a finite set $P'\subset \PP_\Omega$, such that:

    Given any $x\in X:= \Q_+^{n-1}\times \Q_+$, and an $r$-coloring $C:X\to [r] \in \CC_S$, we can find $p'\in P'$ and define
    \[x' := p'(x)\]
    so that for each $\vec{\xi}\in \Xi$, we have
    \[C(\sigma_{\vec{\xi}}(x')) = C(x').\] 
\end{lem}\noindent This is nearly what we want to

\begin{proof}[Proof of Lemma~\ref{compact stable}]Consider a finite set of perturbations $P\subset \PP_\Omega$. 

We first define the projection \[\pi=\pi_S: \Q_+^{n-1}\times \Q_+\to \Q_+;(\vec{u};x)\mapsto x\prod_{s\in S} u_s.\]We can now describe $\CC_S$ as the set of colorings $C_0\circ \pi$, where $C_0$ is a coloring of $\Q_+$.

Next, for each $p\in P$, which is of the form $\sigma_{\vec{\lambda}} \circ R_{(q_1,q_2)}$ for some $\vec{\lambda}\in \Q_{\ge 0}^\Omega, (q_1,q_2)\in \Q_+^2$ (by definition of $\PP_\Omega$); we note that we can find some $\vec{\xi} \in \Q_{\ge 0}^\Omega$ so that
\[\pi\circ p = \pi\circ \sigma_{\vec{\xi}}\circ R_{(1,q_1^{|S|} q_2)}.\]The computations are very similar to prior ones, we omit the details in this version.

Thus doing this repeatedly for each $p\in P$, we obtain finite sets $\Theta\subset \Q_{\ge 0}^\Omega,Q\subset \Q_+$ so that
\[P\subset \{\sigma_{\vec{\theta}} \circ R_{(1,q)}:\vec{\theta}\in \Theta,q\in Q\}.\]

Now, given 
\end{proof}}

\section{Sketch of new theorem}\label{sketch of new}

In this section, we briefly show how to modify the work from before to prove Theorem~\ref{rational main}. By the reductions in Section~\ref{family disjoint union}, it suffices to establish Proposition~\ref{rat build v}.

To do this, we wish to prove the following version of the stable extension result (Proposition~\ref{stable ext}):
\begin{lem}\label{full extension}
    Let $\mathfrak{X}$ be the collection of all new $n$-family. Write $\Omega := \newp(\mathfrak{X})$.
    
    Then for every $r<\infty$ and finite $Q\subset \Q_+$, there exists some finite $Q'\subset \Q_+$, so that for every $C:\Q_+\to [r]$ and $\vec{u}\in \Q_+^{n-1}$, there exists some $q'\in Q'$ and $x'\in \Q_+$ so that defining:
    \[\vec{v}:= (q'\cdot u_1,\dots,q'\cdot u_{n-1},x'),\]we have that $q\cdot \vec{v}$ is $\mathfrak{X}$-consistent for all $q\in Q$.
\end{lem}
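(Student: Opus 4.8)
The plan is to run the machinery behind Proposition~\ref{stable ext} (i.e.\ Proposition~\ref{legit final goal}, proved via Propositions~\ref{comp stab to multistab} and \ref{sufficiency for compact stab}) with essentially no change, plus one extra idea. Set $X:=\Q_+^{n-1}\times\Q_+$ and note that $\Omega:=\newp(\mathfrak{X})$ is here just the set of all pairs $(A,B)$ of disjoint subsets of $[n-1]$ with $A\neq\emptyset$. For $\II\in\mathfrak{X}$ with $f(\II)=S_\II\cup\{n\}$ write $\pi_{S_\II}:X\to\Q_+$ for $(\vec{u};x)\mapsto x\prod_{s\in S_\II}u_s$; as in Subsection~\ref{motivation} one has $\varphi_{\{f(\II)\}}(\vec{w})=\pi_{S_\II}(\vec{w})$ and $\varphi_\II(\vec{w})=\pi_{S_\II}(\sigma_\II(\vec{w}))$, so, given $C:\Q_+\to[r]$ and setting $C_\II:=C\circ\pi_{S_\II}\in\CC_{S_\II}$, the condition ``$q\cdot\vec{v}$ is $\mathfrak{X}$-consistent for all $q\in Q$'' is exactly
\[C_\II\bigl(\sigma_\II\circ R_{(q,q)}(\vec{v})\bigr)=C_\II\bigl(R_{(q,q)}(\vec{v})\bigr)\qquad\text{for all }\II\in\mathfrak{X},\ q\in Q.\]
I would therefore apply the backtracking of Proposition~\ref{comp stab to multistab} to the maps $p_\II:=\sigma_\II$, the colorings $\CC_{S_\II}$, and the finite set $H:=\{R_{(q,q)}:q\in Q\}$ of dilations, obtaining a finite $P'\subset\PP_\Omega$ such that for every $C$ and $\vec{u}$ some $p'\in P'$ makes $\vec{v}:=p'(\vec{u};1)$ satisfy the display; taking $Q'$ to be the (finite) set of factors that elements of $P'$ apply to the $\vec{u}$-coordinate then yields the lemma.

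The one place the proof of Proposition~\ref{stable ext} used the hypothesis ``$|A|>|B|$ on $\newp(\mathfrak{X})$'' was to guarantee that the multidimensional polynomials fed to Theorem~\ref{mpvdw} are good (Lemmas~\ref{single shift}, \ref{general term shift}), and this fails for the full $\mathfrak{X}$. What rescues the argument is that $\sigma_\II$ is supported only on pairs $(A,S_\II)$ with $A\neq\emptyset$, where the relevant exponent is $|A|-|S_\II|+|S_\II|=|A|>0$: writing $\Omega_S:=\{(A,B):A,B\subset[n-1]\ \text{disjoint},\ |A|+|S|>|B|\}$, each $(\sigma_\II,\CC_{S_\II})$ is compactly-stabilizable with respect to $\PP_{\Omega_{S_\II}}$ by the same verification as in the proof of Proposition~\ref{legit final goal} — the first three hypotheses of Proposition~\ref{sufficiency for compact stab} (commuting nicely, uncommuting very-nicely, $\QQ_*$-compatibility of $\CC_{S_\II}$) hold verbatim, and the fourth is Lemma~\ref{general shift restated} applied on $\Omega_{S_\II}$ — cf.\ Remark~\ref{inspection note}. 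The difficulty is closure: in the backtracking one stabilizes a family against shifts inherited from other families, and a shift coming from $\II'$ lives on pairs good for $\CC_{S_{\II'}}$ but possibly bad for $\CC_{S_\II}$.

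The fix is to control the order. I would enumerate $\mathfrak{X}=\{\II_1,\dots,\II_\ell\}$ so that $|f(\II_1)|\geq\cdots\geq|f(\II_\ell)|$ and run the backtracking of Proposition~\ref{comp stab to multistab} processing $\II_\ell$ first and $\II_1$ last. By downward induction the finite set $H_t$ against which $\II_t$ must be stabilized is supported on $\bigcup_{s>t}\Omega_{S_{\II_s}}$ (the initial $H$ is pure dilations and contributes no shift part, and the stabilizer produced for each $\II_s$, $s>t$, lies in $\PP_{\Omega_{S_{\II_s}}}$ by Lemma~\ref{general shift restated}); since $|S_{\II_s}|\leq|S_{\II_t}|$ for $s>t$ one has $\Omega_{S_{\II_s}}\subseteq\Omega_{S_{\II_t}}$, so $H_t$ is actually supported on $\Omega_{S_{\II_t}}$, and the fact just recorded that $(\sigma_{\II_t},\CC_{S_{\II_t}})$ is compactly-stabilizable applies at this step. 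The new stabilizer again lies in $\PP_{\Omega_{S_{\II_t}}}\subseteq\PP_{\Omega_{S_{\II_{t-1}}}}$, so the induction survives, and composing the stabilizers as in Proposition~\ref{comp stab to multistab} produces the desired $P'\subset\PP_\Omega$.

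I expect the main obstacle to be exactly this closure bookkeeping — checking that stabilizers built for small-leading-part families never introduce shifts that are bad for a later, larger-leading-part family — and the non-increasing enumeration is what makes it work, via the inclusions $\Omega_{S'}\subseteq\Omega_S$ for $|S'|\leq|S|$. The rest (the reduction of $\mathfrak{X}$-consistency to the multi-task statement, that each $C_\II$ genuinely lies in a single $\CC_{S_\II}$, the shift identities, and the closure facts for $\Sigma_\Omega,\QQ_*,\PP_\Omega$) is a routine re-run of Sections~\ref{shifting lemmas}--\ref{finer multitask}. Finally, feeding Lemma~\ref{full extension} into the induction of Proposition~\ref{funny induction} — splitting the $(n+1)$-families into those whose leading part contains $n+1$ (handled here) and the $n$-families not involving $n+1$ (handled by the inductive hypothesis) — gives Proposition~\ref{rat build v}, hence Theorem~\ref{rational main}.
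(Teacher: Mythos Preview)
Your proposal is correct and follows essentially the same approach as the paper's sketch: order the families by $|f(\II)|$, verify via Remark~\ref{inspection note} that each $(\sigma_{\II_t},\CC_{S_t})$ is compactly-stabilizable inside the corresponding $\PP_{\Omega_{S_t}}$, and then run the backtracking of Proposition~\ref{comp stab to multistab} using the nesting of the $\Omega$-sets to ensure the accumulated perturbations $H_t$ stay in the right space at each step. Your non-increasing ordering is in fact the correct one --- the paper's ``increasing'' appears to be a slip, since its own justification (that $A$ is nonempty) only yields $|A|+|S_t|>|B|=|S_i|$ when $|S_i|\le |S_t|$, i.e.\ precisely under your ordering.
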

\noindent (here we have chosen to omit all the unnecessary information from the statement of Proposition~\ref{stable ext} which was highlighted Remark~\ref{unnecessary}; we can prove the same strength of result in this general setting) By doing the exact same induction argument as in Subsection~\ref{the induction}, Lemma~\ref{full extension} will easily imply Proposition~\ref{rat build v}.

So, here is how to prove Lemma~\ref{full extension}:

Let $\mathfrak{X}^*$ denote the collection of new $n$-families $\II$ with $|\II| > 1$. It suffices to attain $\mathfrak{X}^*$-consistency, since we can ignore familes where $|\II| = 1$ (as then $\varphi_\II = \varphi_{\{f(\II)\}}$ trivially).   
The idea is to now order families $\II_1,\dots,\II_\ell$ belonging to $\mathfrak{X}^*$, in increasing order by $|f(\II)|$. For $t =1,\dots,\ell$, define \[\Omega_t:= \newp(\{\II_i: i\in \{t,t+1,\dots,\ell\})\] and \[S_t:= f(\II_t)\setminus \{n\}.\]One can check that for $(A,B)\in \Omega_t$, we'll always have that $|A|+|S_t|> |B|$ (since $A$ will never be empty (for our collection $\mathfrak{X}^*$ of $\II$)). Thus, as noted by Remark~\ref{inspection note}, we will have that $(\sigma, \CC_{S_t})$ will be compactly-stabilizable inside $\PP_{\Omega_t}$ for each $t$. 

Then you can simply mimic the proof of Lemma~\ref{comp stab to multistab}, using the fact that $\Omega_t\supset \Omega_{t+1}$ for all $t$. Nothing really needs to be changed, you just need to consider the order/nesting of things.

\hide{\section{Conclusion}

\subsection{The value of polynomial van der Waerden}

We feel obligated to recall a conjecture of Alweiss. See Conjecture~4.1 of: \url{https://arxiv.org/pdf/2211.00766.pdf} (TODO: write this in own words).}


\begin{thebibliography}{}


\bibitem{alweiss2}
R. Alweiss, Monochromatic Sums and Products of Polynomials, available online at \url{https://arxiv.org/abs/2211.00766}

\bibitem{alweiss}
R. Alweiss, Hindman's Conjecture over the Rationals, available online at \url{https://arxiv.org/abs/2307.08901}

\bibitem{bergelson}
V. Bergelson and A. Leibman, Polynomial extensions of van der Waerden's and Szemer\'edi's theorems. J. Amer. Math. Soc., 9(3) (1996), 725--753.

\bibitem{graham} R. Graham, B. Rothschild, and J. Spencer, \textit{Ramsey Theory}.
\bibitem{green}
B. Green, Some Open Problems.

\bibitem{hindman}
N. Hindman, Partitions and sums and products of integers, Trans. Amer. Math. Soc.247 (1979), 227--245.




\bibitem{walters}
M. Walters, Combinatorial proofs of the polynomial van der Waerden theorem and the polynomial Hales--Jewett theorem. Journal of the London Math. Soc., 61(1), (2000), 1--12.

\end{thebibliography}
\end{document}